\newtheorem{theorem}{Theorem}[section]
\newtheorem{prop}[theorem]{Proposition}
\newtheorem{lemma}[theorem]{Lemma}
\newtheorem{remark}[theorem]{Remark}
\newtheorem{question}[theorem]{Question}
\newtheorem{definition}[theorem]{Definition}
\newtheorem{cor}[theorem]{Corollary}
\newtheorem{conj}[theorem]{Conjecture}
\begin{document}

\title[Anti-invariant cohomology of almost complex 4-manifolds]
{On the $J$-anti-invariant cohomology of almost complex 4-manifolds}
\author{Tedi Draghici}
\address{Department of Mathematics\\Florida International Univ.\\Miami, FL 33199}
\email{draghici@fiu.edu}
\author{Tian-Jun Li }

\address{School  of Mathematics\\  University of Minnesota\\ Minneapolis, MN 55455}
\email{tjli@math.umn.edu}
\author{Weiyi Zhang}
\address{Department of Mathematics\\  University of Michigan\\ Ann Arbor, MI
48109}
\email{wyzhang@umich.edu}

\begin{abstract} For a compact almost complex 4-manifold $(M,J)$, we study
the subgroups $H^{\pm}_J$ of $H^2(M, \mathbb{R})$ consisting of cohomology
classes representable by $J$-invariant, respectively, $J$-anti-invariant 2-forms.
If $b^+ =1$, we show that for generic almost complex structures on $M$, the subgroup
$H^-_J$ is trivial. Computations of the subgroups and their dimensions
$h^{\pm}_J$ are obtained for almost complex structures related to integrable ones.
We also prove semi-continuity properties for $h^{\pm}_J$. 
\end{abstract}

\maketitle
\section{Introduction}

For any almost complex manifold $(M, J)$, the last two authors
\cite{LZ} introduced certain subgroups of the de Rham cohomology
groups, naturally defined by the almost complex structure. These
subgroups are interesting almost
complex invariants and there are several works already devoted to
their study \cite{FT}, \cite{AT}, \cite{DLZ}, \cite{AT2}. Particularly important
are the subgroups $H_J^+$, $H_J^-$ of $H^2(M, \mathbb{R})$,
defined as the sets of cohomology classes which can be represented
by $J$-invariant, respectively, $J$-anti-invariant real $2-$forms.
They naturally appear in the relationship
between the compatible symplectic cone and the tamed symplectic
cone of a given compact almost complex manifold \cite{LZ}. All of the above
quoted works consider the problem of whether or not the subgroups
$H_J^+$, $H_J^-$ induce a direct sum decomposition of $H^2(M,
\mathbb{R})$. This is known to be true for integrable
almost complex structures $J$ which admit compatible K\"ahler
metrics on compact manifolds of any dimension. In this case, the
induced decomposition is nothing but the classical (real)
Hodge-Dolbeault decomposition of $H^2(M, \mathbb{R})$ (see
\cite{LZ}, \cite{FT}, \cite{DLZ}). On the other hand, examples from
\cite{FT}, \cite{AT}, \cite{AT2} show that there exist almost complex
structures, even integrable ones, on compact manifolds of dimension
greater or equal to 6, for which the subgroups $H_J^+$,
$H_J^-$ may even have a non-trivial intersection. Dimension 4 is
special, as it was proved in \cite{DLZ} that on any compact almost
complex 4-manifold $(M^4, J)$, the subgroups $H_J^+$, $H_J^-$
do yield a direct sum decomposition for $H^2(M, \mathbb{R})$. In
this paper, we still concentrate to dimension 4, and give some computations and
estimates for the dimensions $h_J^{\pm}$ of the subgroups
$H_J^{\pm}$.

\vspace{0.2cm}

After some preliminaries, section 2 contains a result of Lejmi
\cite{Le} (Lemma \ref{mehdi} in our paper), from which one can see
the space $H_J^-$ as the kernel of an elliptic operator. Following
an observation of Vestislav Apostolov, Lejmi's lemma combined with a
classical result of Kodaira and Morrow yields semi-continuity
properties of $h_{J_{t}}^{\pm}$ for any path $J_t$ of almost complex
structures on a compact 4-manifold (Theorem \ref{semicont-hJt}).
Two conjectures are made about the dimension $h^-_J$ on a compact 4-manifold:
namely, that $h^-_J$ vanishes for generic almost complex structures (Conjecture \ref{conj1}),
and that an almost complex structure with $h^-_J \geq 3$ is necessarily
integrable (Conjecture \ref{conj2}).

\vspace{0.2cm}

In section 3, we confirm the first conjecture for 4-manifolds
with $b^+=1$ (Theorem \ref{int3}). The main topic of section 3 is
the notion of metric related almost complex structures. Two almost
complex structures are said to be {\it metric related} if they
induce the same orientation and they admit a common compatible
metric. We compute the subgroups $H_J^+$, $H_J^-$ and their
dimensions $h^+_J$, $h^-_J$ for almost complex structures metric
related to an integrable one.  The main result is:

\begin{theorem} \label{hJ-}
Let $(M,J)$ be a compact complex surface. If $\tilde J$ is an almost
complex structure on $M$ metric related to $J$, $\tilde J \not\equiv
\pm J$, then $h^-_{\tilde J} \in \{ 0, 1, 2 \}$. The almost complex
structures $\tilde J$ with $h^-_{\tilde J} = 0$ form an open and
dense set with respect to the $C^{\infty}$-topology in the space of
almost complex structures metric related to $J$. The almost complex
structures ${\tilde J}$ for which $h^-_{\tilde J} = 1$ or
$h^-_{\tilde J} = 2$ are explicitly described. In particular, the
case $h^-_{\tilde J} = 2$ appears only when $(M,J)$ is a complex
torus, or a K3 surface.
\end{theorem}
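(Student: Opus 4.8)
The plan is to fix a common compatible metric $g$ (for which both $J$ and $\tilde J$ are orthogonal and induce the chosen orientation) and to reduce the entire statement to a pointwise problem for self-dual harmonic forms. Since $J$ and $\tilde J$ share $g$ and the orientation, their fundamental forms $\omega_J$ and $\omega_{\tilde J}$ are (after normalization) unit sections of the rank-$3$ bundle $\Lambda^+$ of self-dual $2$-forms, and the $\tilde J$-anti-invariant $2$-forms are exactly the sections of the rank-$2$ subbundle $\omega_{\tilde J}^{\perp}\subset\Lambda^+$. By Lemma \ref{mehdi} together with the decomposition theorem of \cite{DLZ}, every class in $H^-_{\tilde J}$ has a self-dual harmonic representative, so one obtains the identification
\begin{equation*}
H^-_{\tilde J}\;\cong\;\{\alpha\in\mathcal H^+_g:\ \langle\alpha,\omega_{\tilde J}\rangle\equiv 0 \text{ on } M\},
\end{equation*}
where $\mathcal H^+_g$ is the $b^+$-dimensional space of self-dual $g$-harmonic forms. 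Thus $h^-_{\tilde J}$ counts how many self-dual harmonic forms can be pointwise orthogonal to the single unit section $\omega_{\tilde J}$, and the integrability of $J$ is used to describe $\mathcal H^+_g$ explicitly: its $J$-anti-invariant part is $2p_g$-dimensional and spanned by the real and imaginary parts of holomorphic $2$-forms (sections of $K_M$).

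For the bound $h^-_{\tilde J}\le 2$, I would argue by contradiction: if $\alpha_1,\alpha_2,\alpha_3\in H^-_{\tilde J}$ were linearly independent, then pointwise they all lie in the rank-$2$ bundle $\omega_{\tilde J}^{\perp}$, hence are everywhere pointwise linearly dependent. Using the self-duality identity $\alpha_i\wedge\alpha_j=\langle\alpha_i,\alpha_j\rangle\,dV$ and unique continuation for the elliptic system $d\alpha=d^*\alpha=0$ (a self-dual harmonic form vanishing on a nonempty open set vanishes identically), one shows that everywhere pointwise dependence of self-dual harmonic forms forces global linear dependence, the only exception being the situation where the forms pointwise frame $\Lambda^+$ — and three such frame-forming forms cannot simultaneously be orthogonal to $\omega_{\tilde J}$. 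The main obstacle of the whole proof lies exactly in this passage from pointwise to global dependence; I expect that the cleanest route goes through the complex-surface picture, where the forms are real and imaginary parts of holomorphic $2$-forms, so that their common degeneracy locus is governed by the canonical divisor and can be controlled analytically.

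Next I would classify the extremal cases. If $h^-_{\tilde J}=2$, pick a basis $\alpha,\beta$ of $H^-_{\tilde J}$; these span $\omega_{\tilde J}^{\perp}$ pointwise (otherwise $\omega_{\tilde J}$ would be forced into their span and one could produce a third anti-invariant form), so $\{\alpha,\beta,\omega_{\tilde J}\}$ is a pointwise frame of $\Lambda^+$. After rescaling this yields a nowhere-vanishing self-dual harmonic form, equivalently a nowhere-vanishing holomorphic $2$-form, so $K_M$ is trivial; combined with $b^+=3$ this forces $(M,J)$ to be a complex torus or a K3 surface, with $\tilde J$ an explicit member of the induced twistor family. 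If $h^-_{\tilde J}=1$, the single generator $\alpha$ must be pointwise orthogonal to $\omega_{\tilde J}$, which determines $\omega_{\tilde J}$ up to sign within $\alpha^{\perp}\cap\Lambda^+$ wherever $\alpha\neq 0$; this gives the explicit description of such $\tilde J$ as those whose fundamental form avoids a prescribed harmonic direction, together with a matching condition along the zero set of $\alpha$.

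Finally, openness of $\{h^-_{\tilde J}=0\}$ follows immediately from the upper semi-continuity of $h^-$ established in Theorem \ref{semicont-hJt}, since $\{h^-_{\tilde J}\le 0\}$ is then open in the $C^\infty$-topology on metric related almost complex structures. For density I would use the explicit descriptions above: the loci $\{h^-_{\tilde J}=1\}$ and $\{h^-_{\tilde J}=2\}$ are cut out by the pointwise orthogonality constraints $\langle\alpha,\omega_{\tilde J}\rangle\equiv 0$ for $\alpha$ ranging in a fixed finite-dimensional space, hence form proper closed subsets of positive codimension; a small perturbation of $\omega_{\tilde J}$ within the metric related family (equivalently, a rotation of $\tilde J$ in its twistor sphere) destroys every such orthogonality, so $h^-_{\tilde J}=0$ on an open dense set.
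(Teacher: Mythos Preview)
Your proposal has a genuine gap at exactly the point you flag: the passage from ``three self-dual harmonic forms pointwise dependent in a rank-$2$ subbundle'' to ``globally linearly dependent'' is not justified, and in fact there is no general principle of this kind without further structure. The paper's argument bypasses this entirely by exploiting the integrability of $J$ through the \emph{Gauduchon metric}: one chooses $g$ in the common conformal class to be Gauduchon with respect to $J$, so that (Proposition~\ref{gaud}) every self-dual harmonic form has \emph{constant} trace against $\omega_J$, i.e.\ decomposes as $a\,\omega_J+\alpha$ with $a$ constant and $\alpha\in\Omega^-_J$. Writing $\tilde\omega=f\omega_J+\beta$, the orthogonality $\langle\psi,\tilde\omega\rangle\equiv 0$ reads $2af+\langle\alpha,\beta\rangle\equiv 0$. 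When the canonical bundle is topologically nontrivial, $\beta$ must vanish somewhere, forcing $a=0$ and hence $\psi=\alpha\in\mathcal Z^-_J$; thus $H^-_{\tilde J}\subset H^-_J$, and Proposition~\ref{int1} (which bounds $\dim(H^-_J\cap H^-_{\tilde J})\le 1$ via unique continuation on the open set where $\omega_J,\tilde\omega$ are independent) gives $h^-_{\tilde J}\le 1$ directly. The general bound $h^-_{\tilde J}\le 2$ then follows from $h^-_J\ge b^+-1$ (which is classical for compact complex surfaces) combined with the same intersection estimate; this is the content of Corollary~\ref{12}. You never invoke the Gauduchon trace property, which is the engine of the whole proof.

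A related flaw appears in your treatment of $h^-_{\tilde J}=2$: you assert that a basis $\alpha,\beta$ of $H^-_{\tilde J}$ produces a nowhere-vanishing self-dual harmonic form and ``equivalently a nowhere-vanishing holomorphic $2$-form''. This equivalence fails --- a self-dual harmonic form is the real part of a holomorphic $2$-form only if it is $J$-anti-invariant, which you have not established (and which, again, is exactly what the Gauduchon argument supplies). The paper instead reaches the torus/K3 conclusion by exclusion: Theorem~\ref{nontrivK} and Propositions~\ref{hyperelliptic}, \ref{kodsurface} show $h^-_{\tilde J}\le 1$ unless $(M,J)$ has holomorphically trivial canonical bundle and K\"ahler type. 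Finally, your appeal to Theorem~\ref{semicont-hJt} for openness is only path-wise as stated; the paper gives a direct sequential argument (Proposition~\ref{seqJ}) inside $\mathcal J_J$, using that the candidate anti-invariant forms lie on the unit sphere of the finite-dimensional space $\mathcal Z^-_J$.
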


A main tool in the proof of Theorem \ref{hJ-} are the Gauduchon metrics. 
We also use them to give an alternative proof of the fact first observed in \cite{LZ},
that for a compact complex surface $(M,J)$, $J$ is tamed by a symplectic form 
if and only if $b_1$ is even (Proposition \ref{tamecx}).
Section 3 ends with a couple of applications of Theorem \ref{hJ-}.
In Theorem \ref{nonpure}, we prove that the intersection of $H_J^+$,
$H_J^-$ could be non-trivial even for a K\"ahler $J$ if the
compactness assumption is removed. Theorem \ref{K3T4} shows that the
examples of non-integrable almost complex structures with $h_{\tilde
J}^-=2$ from Theorem \ref{hJ-} cannot admit a smooth
pseudo-holomorphic blowup.

\vspace{0.2cm}

The so called {\it well-balanced} almost
Hermitian 4-manifolds are introduced in section 4, as a natural generalization of both the
Hermitian 4-manifolds and the almost K\"ahler ones. It is likely
that this new notion has links with generalized complex geometry,
but we leave the study of these possible links for future work. For
now, we give some examples of well-balanced almost Hermitian
4-manifolds (Proposition \ref{expwb}), and prove a vanishing result for
$h_J^-$ on a well-balanced compact almost Hermitian 4-manifold with
Hermitian Weyl tensor (Theorem \ref{mainwb}).

\vspace{0.2cm}

Finally, in section 5 we discuss Donaldson's symplectic version of
the Calabi-Yau equation on 4-manifolds. We observe that his
technique based on the Implicit Function Theorem can also be used to
obtain a stronger semi-continuity property for $h_J^{\pm}$ near a $J$ which
admits a compatible symplectic form (Theorem \ref{deformation}).

\vspace{0.2cm} {\bf Acknowledgments:} We are very grateful to V.
Apostolov for pointing out Theorem \ref{semicont-hJt} and for other
valuable suggestions. We also thank D. Angella and A. Tomassini for
good discussions and for sending us the preprint \cite{AT2}, and V.
Tosatti for useful comments.

\section{Definitions and preliminary results}

Let $(M, J)$ be an almost complex manifold. The almost complex
structure $J$ acts on the bundle of real
2-forms $\Lambda^2$ as an involution,
\newline by $\alpha(\cdot, \cdot)
\rightarrow \alpha(J\cdot, J\cdot)$,  thus we have the splitting
into $J$-invariant,
respectively, $J$-anti-invariant 2-forms
\begin{equation} \label{formtype}
\Lambda^2=\Lambda_J^+\oplus \Lambda_J^-.
\end{equation}
We will denote by $\Omega^2$ the space of 2-forms on $M$
($C^{\infty}$-sections of the bundle $\Lambda^2$) , $\Omega_J^+$ the
space of $J$-invariant 2-forms, etc. For any $\alpha \in \Omega^2$,
the $J$-invariant (resp. $J$-anti-invariant) component of $\alpha$
with respect to the decomposition (\ref{formtype}) will be denoted
by $\alpha '$ (resp. $\alpha ''$). We will also use the notation $
\mathcal Z^2$ for the space of closed 2-forms on $M$ and $\mathcal
Z_J^{\pm} = \mathcal Z^2 \cap \Omega_J^{\pm}$ for the corresponding
projections.

The bundle $\Lambda^-_J$ inherits an almost complex structure, still
denoted $J$, by
$$\alpha \in \Lambda^-_J \; \rightarrow \; J\alpha\in \Lambda^-_J,
\mbox{ where } J\alpha(X,Y) = -\alpha(JX,Y) .$$
It is well known that when $J$ is integrable (in any dimension), we have
$$ \beta \in \mathcal Z_J^- \Leftrightarrow J\beta \in \mathcal Z_J^- .$$
Conversely (see e.g. \cite{Sal}), if $(M, J)$ is a
connected almost complex 4-manifold and there is a pair $\beta \in \mathcal Z_J^-, J\beta \in \mathcal Z_J^-$
($\beta$ not identically zero), then $J$ is integrable.

\vspace{0.1cm}
The following definitions were introduced in \cite{LZ} for an arbitrary almost complex manifold $(M,J)$.

\begin{definition}
(i) The $J$-invariant, respectively, $J$-anti-invariant cohomology subgroups $H_J^{+}$, $H_J^{-}$, are defined by
\begin{equation} \nonumber
H_J^{\pm}=\{ \mathfrak{a} \in H^2(M;\mathbb R) | \exists \; \alpha\in \mathcal
Z_J^{\pm} \mbox{ such that } [\alpha] = \mathfrak{a} \} \, ;
\end{equation}
(ii) $J$ is said to be {\it $C^{\infty}$-pure} if $H_J^+\cap H_J^-=\{0\}$;
\newline (iii) $J$ is said to be {\it $C^{\infty}$-full} if $H_J^+ + H_J^- = H^2(M;\mathbb
R)$;
\newline (iv) $J$ is {\it $C^{\infty}$-pure and full} if $H_J^+ \oplus H_J^- = H^2(M;\mathbb
R)$.
\end{definition}

\noindent As noted in the introduction, when $J$ is integrable and admits a compatible K\"ahler metric, or when
$(M,J)$ is a complex surface,
the subgroups $H_J^{\pm}$ are nothing but the (real) Dolbeault
cohomology groups (see \cite{DLZ}, \cite{AT}):
\begin{equation}\label{same} H_J^{+}=H_{\bar \partial}^{1,1}\cap H^2(M;\mathbb R),
\quad H_J^{-}=(H_{\bar \partial}^{2,0}\oplus H_{\bar
\partial}^{0,2})\cap H^2(M;\mathbb R).
\end{equation}
In these cases, there is a weight 2 formal Hodge decomposition (more
generally, this is true whenever the Fr\"ohlicher spectral sequence
degenerates at first step), so $J$ is $C^{\infty}$-pure and full.
For complex dimensions greater or equal to 3, there are known
examples of complex structures for which the Fr\"ohlicher spectral
sequence does not degenerate at first step. Recently, Angella and
Tomassini have also shown in \cite{AT} that Iwasawa manifold $X^6$ admits
complex structures which are not $C^{\infty}$-pure nor full.
Other interesting examples appear in \cite{AT2},
showing, in particular, that the notions of $C^{\infty}$-pure and $C^{\infty}$-full are not related.
The first 6-dimensional examples of (non-integrable) almost complex
nilmanifolds which are not $C^{\infty}$-pure nor full were given by
Fino and Tomassini \cite{FT}.

\vspace{0.2cm}

By contrast, in dimension 4, the following result was proved in \cite{DLZ}:

\begin{theorem} \label{pf-dim4}
If $M$ is a compact 4-dimensional manifold then any almost complex
structure $J$ on $M$ is $C^{\infty}$-pure and full, i.e.
\begin{equation} \label{purefull}
H^2(M;\mathbb R)=H_J^+ \oplus H_J^- \; .
\end{equation}
\end{theorem}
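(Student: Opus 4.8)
The plan is to fix a Riemannian metric $g$ compatible with $J$, with fundamental form $\omega=g(J\cdot,\cdot)$, and to exploit the involutivity of the Hodge star $\ast$ on $2$-forms in dimension four, giving the pointwise splitting $\Lambda^2=\Lambda^{sd}\oplus\Lambda^{asd}$ into self-dual and anti-self-dual forms. I would first record two elementary pointwise facts. First, writing out an adapted orthonormal coframe and using that $\omega$ together with the real and imaginary parts of a local $(2,0)$-form are self-dual, one obtains $\Lambda^{sd}=\mathbb{R}\,\omega\oplus\Lambda^-_J$ and that $\Lambda^{asd}$ consists precisely of the primitive $J$-invariant forms; in particular every $J$-anti-invariant form is self-dual and every anti-self-dual form is $J$-invariant. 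Second, for $\alpha\in\Omega^+_J$ and $\beta\in\Omega^-_J$ the $4$-form $\alpha\wedge\beta$ vanishes identically, since it has pure type $(3,1)+(1,3)$ and there are no such forms in complex dimension two.

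To prove $C^{\infty}$-purity, suppose $\mathfrak a\in H^+_J\cap H^-_J$, represented both by a closed $J$-invariant form $\alpha$ and by a closed $J$-anti-invariant form $\beta$. On the one hand $\mathfrak a^2=[\alpha]\cup[\beta]=\int_M\alpha\wedge\beta=0$ by the second pointwise fact. On the other hand, since $\beta$ is self-dual, $\mathfrak a^2=[\beta]\cup[\beta]=\int_M\beta\wedge\ast\beta=\|\beta\|^2_{L^2}\ge 0$. Hence $\beta\equiv 0$ and $\mathfrak a=0$, so $H^+_J\cap H^-_J=\{0\}$.

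For $C^{\infty}$-fullness I would represent an arbitrary class by its $g$-harmonic form $h$ and split $h=h^{sd}+h^{asd}$ into self-dual and anti-self-dual harmonic parts. The part $h^{asd}$ is closed and, by the first fact, $J$-invariant, so $[h^{asd}]\in H^+_J$; it remains to handle $[h^{sd}]$. Note that a closed $J$-anti-invariant form is automatically coclosed (being self-dual), hence harmonic, so $\mathcal Z^-_J=\mathcal H^{sd}\cap\Omega^-_J$ sits inside the self-dual harmonic space $\mathcal H^{sd}$. Given $\psi\in\mathcal H^{sd}$, I would let $\beta\in\mathcal Z^-_J$ be the $L^2$-orthogonal projection of $\psi$ onto $\mathcal Z^-_J$ and seek a $1$-form $\theta$ with $(d\theta)''=\psi''-\beta$; then $\psi-\beta-d\theta$ is closed and $J$-invariant, giving $[\psi]=[\psi-\beta-d\theta]+[\beta]\in H^+_J+H^-_J$. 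Solvability is governed by the operator $T\colon\theta\mapsto(d\theta)''$ from $1$-forms to $\Omega^-_J$, whose formal adjoint sends $\beta\in\Omega^-_J$ to $T^{*}\beta=d^{*}\beta=-\ast d\ast\beta=-\ast d\beta$; thus $\ker T^{*}=\mathcal Z^-_J$, and by the choice of $\beta$ the form $\psi''-\beta$ is $L^2$-orthogonal to this cokernel.

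The main obstacle is precisely this last PDE step: to conclude that $\psi''-\beta$ actually lies in the image of $T$ (not merely orthogonal to the $L^2$-kernel of $T^{*}$) one needs to know that $T$ has closed range, which requires the relevant elliptic/Fredholm theory together with the correct identification of its cokernel with $\mathcal Z^-_J$; this is the analytic heart of the argument, and is where the elliptic operator underlying Lemma \ref{mehdi} enters. The pointwise linear-algebra facts and the purity computation are routine by comparison. Combining the purity statement with the inclusions produced above then yields $H^2(M;\mathbb R)=H^+_J\oplus H^-_J$, recovering the result of \cite{DLZ}.
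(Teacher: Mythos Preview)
Your argument is correct and matches the strategy the paper attributes to \cite{DLZ}: Hodge theory together with the self-dual/anti-self-dual splitting special to dimension~$4$. The paper itself does not reprove the theorem here, so there is no detailed comparison to make beyond noting that your purity argument via the cup product and the self-duality of $\Lambda^-_J$ is the standard one.

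Regarding the step you flag as the ``main obstacle'': your instinct is exactly right, and in fact the connection to Lemma~\ref{mehdi} is more precise than you state. With $T\theta=(d\theta)''$ and $T^{*}\beta=\delta^{g}\beta$ as you computed, one has
\[
TT^{*}\beta=(d\,\delta^{g}\beta)''=P(\beta),
\]
which is \emph{literally} Lejmi's operator $P$. Since $P$ is self-adjoint and strongly elliptic on the compact manifold $M$, it is Fredholm with $\Omega^-_J=\ker P\oplus\mathrm{Im}\,P$; from $\mathrm{Im}\,P\subset\mathrm{Im}\,T$ and $\ker P=\ker T^{*}=\mathcal Z^-_J$ one gets $\mathrm{Im}\,T=(\mathcal Z^-_J)^{\perp}$ immediately. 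Hence your equation $(d\theta)''=\psi''-\beta$ is solvable precisely because $\psi''-\beta\perp_{L^2}\mathcal Z^-_J$, and the fullness argument goes through. So the ``obstacle'' dissolves once you observe $TT^{*}=P$; you may want to state this identity explicitly rather than leave it as a pointer.
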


\vspace{0.2cm} \noindent We refer to \cite{DLZ} for the proof of
Theorem \ref{pf-dim4}. It is based on Hodge theory and the
particularity of dimension 4 stemming from the self-dual,
anti-self-dual decomposition induced by the Hodge operator $*_g$ of
a Riemannian metric $g$ on $M$:
\begin{equation} \label{sdasd}
\Lambda^2=\Lambda_g^+\oplus \Lambda_g^-.
\end{equation}
If the metric $g$ is compatible with the almost complex structure
$J$ and we let $\omega$ be the fundamental form defined by
$\omega(\cdot, \cdot)=g(J\cdot, \cdot)$, the decompositions (\ref{formtype}) and
(\ref{sdasd}) are related by
\begin{equation}\label{type-Jinv}
\Lambda_J^+=\underline{\mathbb R}(\omega)\oplus \Lambda_g^-,
\end{equation}
\begin{equation} \label{type-sdasd}
\Lambda_g^+ = \underline{\mathbb R}(\omega) \oplus \Lambda_J^-.
\end{equation}
In particular, any $J$-anti-invariant 2-form in 4-dimensions is
self-dual, thus any closed, $J$-anti-invariant 2-form is harmonic,
self-dual. This enables us to identify the space $H^-_J $ with
$\mathcal Z_J^{-}$, and further, with the set ${\mathcal{H}}^{+,
\omega^{\perp}}_g$ of harmonic self-dual forms pointwise orthogonal
to $\omega$. In fact, it is an observation of Lejmi \cite{Le} that
this space can be seen as the kernel of an elliptic operator defined
on $\Omega_J^-$.
\begin{lemma} \label{mehdi} (\cite{Le}, Lemma 4.1)
Let $(M^4, g, J, \omega)$ be a compact, almost Hermitian 4-manifold.
Consider the operator
\[P  : \Omega_J^- \rightarrow \Omega_J^- \; , \; \; P(\psi) = (d \delta^g \psi)'' ,\]
where $\delta^g$ is the codifferential with respect to the metric
$g$ and the superscript $''$ denotes the projection $ \Omega^2
\rightarrow \Omega_J^-$. Then $P$ is a self-adjoint strongly
elliptic linear operator with kernel the $g$-harmonic
$J$-anti-invariant 2-forms.
\end{lemma}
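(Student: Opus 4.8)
The plan is to establish three things separately: that $P$ is formally self-adjoint, that its principal symbol is definite (hence $P$ is strongly elliptic), and that its kernel is exactly the space of $g$-harmonic forms in $\Omega_J^-$. That $P$ is linear and of order two is immediate from its definition as the composition $\psi\mapsto(d\delta^g\psi)''$ of first-order operators with the zeroth-order projection $''$, so all the content is in the remaining three assertions, and the first two will in fact feed directly into the identification of the kernel.

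First I would record the basic integration-by-parts identity. For $\psi,\phi\in\Omega_J^-$, since $''$ is the pointwise (hence $L^2$) orthogonal projection onto $\Omega_J^-$ and $\phi=\phi''$, one has $\langle P\psi,\phi\rangle=\langle d\delta^g\psi,\phi\rangle=\langle\delta^g\psi,\delta^g\phi\rangle$, using that $d$ and $\delta^g$ are $L^2$-adjoints on the compact manifold $M$. This expression is symmetric in $\psi$ and $\phi$, which gives self-adjointness; taking $\phi=\psi$ also yields the quadratic identity $\langle P\psi,\psi\rangle=\|\delta^g\psi\|_{L^2}^2\geq 0$, which I will reuse for the kernel.

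For strong ellipticity I would compute the principal symbol. At a nonzero covector $\xi$ the symbols of $d$ and $\delta^g$ are, up to sign, $\alpha\mapsto\xi\wedge\alpha$ and $\alpha\mapsto-\iota_{\xi^\sharp}\alpha$; composing and projecting gives $\sigma_\xi(P)\psi=-(\xi\wedge\iota_{\xi^\sharp}\psi)''$ for $\psi\in\Lambda_J^-$. Pairing with $\psi$, using $\psi=\psi''$ and the adjunction $\langle\xi\wedge\beta,\psi\rangle=\langle\beta,\iota_{\xi^\sharp}\psi\rangle$, collapses the quadratic form to $\langle\sigma_\xi(P)\psi,\psi\rangle=-|\iota_{\xi^\sharp}\psi|^2$. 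The crux is then to see that this is strictly negative for $\psi\neq 0$ and $\xi\neq 0$, i.e. that contraction by a nonzero vector is injective on $\Lambda_J^-$. This is exactly where the geometry of dimension four enters: by (\ref{type-sdasd}) every $J$-anti-invariant $2$-form is self-dual, and a nonzero self-dual $2$-form satisfies $\psi\wedge\psi=|\psi|^2\,\mathrm{vol}_g\neq 0$, so it is non-degenerate as an alternating form and $\iota_{\xi^\sharp}\psi\neq 0$ whenever $\xi^\sharp\neq 0$. Thus the symbol is (negative) definite and $P$ is strongly elliptic. I expect this non-degeneracy step to be the main obstacle: it is short, but it is the one point that is genuinely geometric rather than formal, and it is what prevents the symbol from degenerating.

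Finally, for the kernel I would argue by double inclusion. If $\psi\in\Omega_J^-$ is $g$-harmonic then $\delta^g\psi=0$, so $d\delta^g\psi=0$ and a fortiori $P\psi=0$. Conversely, if $P\psi=0$ then the quadratic identity forces $\|\delta^g\psi\|_{L^2}^2=\langle P\psi,\psi\rangle=0$, hence $\delta^g\psi=0$. Because $\psi$ is self-dual, $*\psi=\psi$, so $\delta^g\psi=-{*}d{*}\psi=-{*}d\psi$; since $*$ is an isomorphism, $\delta^g\psi=0$ forces $d\psi=0$ as well. Therefore $\psi$ is both closed and co-closed, i.e. $g$-harmonic, completing the identification of $\ker P$ with the $g$-harmonic $J$-anti-invariant $2$-forms.
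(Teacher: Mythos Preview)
Your proof is correct. It does, however, take a somewhat different route to ellipticity than the paper's sketch. The paper observes that for self-dual $\psi$ one has $(d\delta^g\psi)^+ = \tfrac{1}{2}\Delta^g\psi$ (since $*$ interchanges $d\delta^g\psi$ and $\delta^g d\psi$ when $*\psi=\psi$), so that
\[
P(\psi)=\tfrac{1}{2}\Delta^g\psi-\tfrac{1}{4}\langle\Delta^g\psi,\omega\rangle\,\omega,
\]
and then checks that the $\omega$-term involves at most one derivative of $\psi$; hence $P$ inherits the principal symbol of $\tfrac{1}{2}\Delta^g$, which is $\tfrac{1}{2}|\xi|^2\,\mathrm{Id}$. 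You instead compute the symbol directly and invoke the non-degeneracy of a nonzero self-dual $2$-form (via $\psi\wedge\psi=|\psi|^2\,\mathrm{vol}_g$) to see that $\iota_{\xi^\sharp}$ is injective on $\Lambda_J^-$. Your argument is more self-contained and also yields self-adjointness and the kernel description cleanly from the single identity $\langle P\psi,\phi\rangle=\langle\delta^g\psi,\delta^g\phi\rangle$; the paper's approach has the small advantage of identifying the symbol exactly (rather than merely as definite) and making the link to the Hodge Laplacian explicit. The sign discrepancy in the symbol is just convention and does not affect strong ellipticity.
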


\noindent Lemma 4.1 in \cite{Le} is stated for almost K\"ahler
4-manifolds, but the reader can easily check that its proof does not
use the assumption that $\omega$ is closed. Indeed, since
$\Omega_J^- \subset \Omega_g^+$ and since the Riemannian Laplace
operator
\newline $\Delta^g = d \delta^g + \delta^g d$ preserves the
decomposition (\ref{sdasd}), note that for $\psi \in
\Omega_J^-$,
\[ P(\psi) = \frac{1}{2} \Delta^g \psi - \frac{1}{4} <\Delta^g \psi, \omega> \omega .\]
Then since $\psi$ and $\omega$ are pointwise orthogonal, a short
computation gives
\[ <\Delta^g \psi, \omega> = - 2 \delta^g (<\psi, \nabla \omega>) + <\psi, \Delta^g \omega> .\]
The right side contains clearly only one derivative in $\psi$, and
the lemma follows easily. Here and later in the paper, $\delta^g$ denotes the divergence operator, i.e. the
adjoint of $d$ with respect to the metric $g$.

\vspace{0.2cm}

Let us denote the dimension of $H_J^{\pm}$ by $h^{\pm}_J$, let $b_2$
be the second Betti number, and $b^{\pm}$ be the ``self-dual'',
resp. ``anti-self-dual'' Betti numbers of the 4-manifold $M$. By
Theorem \ref{pf-dim4} and the observations above, we have
\begin{equation} \label{h^+_+ h^-}
h^+_J + h^-_J = b_2 ;
\end{equation}
\begin{equation}\label{easyestimate}
h^{+}_J\geq b^-, \quad h^-_J\leq b^+.
\end{equation}

\noindent We propose the following two conjectures:
\begin{conj} \label{conj1}
For generic almost complex structures $J$ on a compact 4-manifold $M$, $h_J^- = 0$.
\end{conj}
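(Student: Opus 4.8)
The plan is to split a genericity statement into its two standard halves and handle them separately: openness of the set $\{J : h_J^- = 0\}$ in the $C^\infty$ topology, and its density. Openness is essentially free. Since $0$ is the smallest value $h_J^-$ can take, and since Theorem \ref{semicont-hJt} gives upper semi-continuity of $h_{J_t}^-$ along any path of almost complex structures, the locus where $h_J^- = 0$ is automatically open. The entire content is therefore density: every $J_0$ must be $C^\infty$-approximable by some $J$ with $h_J^- = 0$.

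For density I would exploit the description of $H_J^-$ underlying (\ref{type-sdasd}). Fix a metric $g$ compatible with $J_0$. Any closed $J$-anti-invariant $2$-form is harmonic and self-dual, and by $\Lambda_g^+ = \underline{\mathbb R}(\omega)\oplus \Lambda_J^-$ a harmonic self-dual form is $J$-anti-invariant exactly when it is pointwise orthogonal to the fundamental form $\omega$. Hence
\[ H_J^- \;\cong\; \{\, \beta \in \mathcal H_g^+ \;:\; \langle \beta, \omega\rangle \equiv 0 \,\}, \]
where $\mathcal H_g^+$ is the fixed $b^+$-dimensional space of $g$-harmonic self-dual forms. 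Keeping $g$ fixed while varying $J$ among $g$-compatible almost complex structures leaves $\mathcal H_g^+$ untouched and moves only $\omega = \omega_J$, so the task reduces to arranging that the linear map $\mathcal H_g^+ \to C^\infty(M)$, $\beta \mapsto \langle \beta, \omega_J\rangle$, be injective.

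The case $b^+ = 1$ (Theorem \ref{int3}) is where this closes cleanly. Here $\mathcal H_g^+ = \mathbb R\,\eta$ with $\eta \not\equiv 0$, so $h_J^- = 0$ is equivalent to $\langle \eta, \omega_J\rangle \not\equiv 0$. If this already holds, we are done; otherwise pick $p$ with $\eta(p)\neq 0$ (such points are dense, since a nonzero harmonic form cannot vanish on an open set by unique continuation) and rotate $J_0$ within the $2$-sphere of $g$-compatible almost complex structures, supported in a small ball about $p$. Because the first variation of $\langle \eta, \omega_J\rangle(p)$ along this rotation is nonzero, an arbitrarily $C^\infty$-small perturbation makes $\langle \eta, \omega_J\rangle$ not identically zero, hence kills the one-dimensional space of candidate null forms. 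This yields a $g$-compatible, $C^\infty$-nearby $J$ with $h_J^- = 0$, proving density.

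The main obstacle is the general case $b^+ \geq 2$, which is precisely why the assertion is only a conjecture. Now $\mathcal H_g^+$ is $b^+$-dimensional, and a local rotation of $J$ near a single point imposes essentially one scalar condition, whereas injectivity of $\beta \mapsto \langle\beta,\omega_J\rangle$ must simultaneously defeat an entire $b^+$-dimensional space of potential null forms. One natural route is a Sard--Smale transversality argument for the universal family of Lejmi operators $P_J$ of Lemma \ref{mehdi}: each $P_J$ is self-adjoint and elliptic, so a jump in $\dim\ker P_J$ reflects an eigenvalue crossing zero, a nominally codimension-one event, suggesting $0\notin\mathrm{spec}(P_J)$ generically. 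The difficulty is to verify the transversality uniformly, i.e. to show that deformations of $J$ genuinely move the relevant eigenvalues off zero rather than preserving a kernel pinned down by the geometry — as happens in the rigid examples with $h_J^- = 2$ on complex tori and K3 surfaces in Theorem \ref{hJ-}. Controlling this global, multi-parameter degeneration is the crux that a complete proof would have to resolve.
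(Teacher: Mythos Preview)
You correctly recognize that this is stated as a conjecture and that the paper only establishes the case $b^+ = 1$ (Theorem~\ref{int3}). Your density argument in that case --- perturb $\omega$ inside $\Lambda_g^+$ on a small ball so that it ceases to be pointwise orthogonal to the single harmonic self-dual generator --- is exactly the paper's argument, phrased slightly differently (the paper writes $\tilde\omega = f\omega + r\alpha$ with $r$ compactly supported, which makes $\langle\tilde\omega,\alpha\rangle = r|\alpha|^2$ visibly nonzero). Your discussion of the obstacle for $b^+ \geq 2$ is also in line with the paper, which leaves that case open.

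There is, however, a genuine gap in your openness claim. You say openness is ``essentially free'' from Theorem~\ref{semicont-hJt}, but that theorem only gives upper semi-continuity of $h_{J_t}^-$ along \emph{smooth paths} $J_t$. Path-wise semi-continuity does not by itself yield openness of $\{h_J^- = 0\}$ in the $C^\infty$ topology on the Fr\'echet manifold $\mathcal J$: a sequence $J_k \to J$ with $h_{J_k}^- \geq 1$ need not sit inside any smooth one-parameter family, so Theorem~\ref{semicont-hJt} cannot be applied to it. The paper is aware of this distinction (see the closing remarks of Section~\ref{sCY}), and for $b^+ = 1$ it does \emph{not} invoke Theorem~\ref{semicont-hJt} for openness. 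Instead it argues directly that the complement is sequentially closed: given $J_k \to J$ with $h_{J_k}^- = 1$, it builds compatible metrics $g_k \to g$, tracks the Hodge decomposition of a fixed $g$-harmonic self-dual form $\psi$ via the Green operators $\mathbb G^k$, and shows that the $g_k$-self-dual harmonic parts $(\psi_{k,h})^+$ converge to $\psi$, forcing $\psi$ to be $J$-anti-invariant. You would need either this argument or a direct upper semi-continuity statement for $\dim\ker P_J$ under $C^\infty$ convergence of $J$ (not just along smooth families) to close the gap.
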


\begin{conj} \label{conj2}
On a compact 4-manifold, if $h_J^- \geq 3$ then $J$ is integrable.
\end{conj}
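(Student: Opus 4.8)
The plan is to reduce Conjecture \ref{conj2} to the integrability criterion recalled in Section 2: if there is a nonzero $\beta \in \mathcal Z_J^-$ with $J\beta$ also closed, then $J$ is integrable. By the estimate (\ref{easyestimate}) we have $h^-_J \le b^+$, so there is nothing to prove unless $b^+ \ge 3$; assume this. Since $\Lambda^-_J$ carries the almost complex structure $J$, I identify it with a complex line bundle $L$ and write a closed $J$-anti-invariant form as $\beta = \sigma + \bar\sigma$, where $\sigma$ is its $(2,0)$-component, a section of $L$. In this language the $J$-action on $\Lambda^-_J$ is multiplication by $i$, so $J\beta$ corresponds to $\pm i\sigma$ and $J\beta$ is closed precisely when $d\sigma = 0$. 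Thus the goal is to produce a nonzero closed $(2,0)$-form out of the $h^-_J \ge 3$ independent classes in $H^-_J$.

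The key structural step is to compute $d\sigma$ using the type decomposition $d = \mu + \partial + \bar\partial + \bar\mu$ of an almost complex $4$-manifold, where $\mu$ and $\bar\mu$ are the order-zero operators built from the Nijenhuis tensor $N$. For $\sigma \in \Omega^{2,0}$ one has $d\sigma = \bar\partial\sigma + \bar\mu\sigma$, and closedness of $\beta$ forces the $(2,1)$-identity $\bar\partial\sigma = -\mu\bar\sigma$. Consequently $d\sigma = \bar\mu\sigma - \mu\bar\sigma$ is \emph{algebraic} in $\sigma$: it is the value at $\sigma$ of a bundle map $\mathcal N$ determined by $N$, which is injective at every point where $N \ne 0$ and vanishes where $N = 0$. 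Hence, writing $U = \{N \ne 0\}$, a closed $\beta$ has $J\beta$ closed if and only if $\sigma$ vanishes on $U$. Conjecture \ref{conj2} is therefore equivalent to the statement that the evaluation map $\mathcal Z_J^- \to \Gamma(L|_U)$, $\beta \mapsto \sigma|_U$, has nontrivial kernel whenever $h^-_J \ge 3$; equivalently, that at most two $\mathbb R$-linearly independent closed $J$-anti-invariant forms can remain independent over the non-integrability locus.

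The main obstacle is exactly this rank bound. Fiberwise the three sections $\sigma_1, \sigma_2, \sigma_3$ of the line bundle $L$ are $\mathbb C$-dependent at every point, but turning these pointwise relations, with their varying coefficients, into a single \emph{constant} real relation that kills $\sigma|_U$ is the hard part; three sections of a line bundle are in general $\mathbb R$-independent over $U$. The extra leverage I would exploit is that each $\sigma_i$ satisfies the overdetermined first-order system $\bar\partial\sigma_i = -\mu\bar\sigma_i$ together with $d(\mathcal N\sigma_i) = 0$ — a Cauchy--Riemann operator with an antilinear zeroth-order (Nijenhuis) potential — so by the similarity principle such solutions enjoy unique continuation and vanishing on the open set $U$ propagates to all of $M$. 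The plan is then to combine this unique continuation with the positivity of the intersection form on $H^-_J$ coming from self-dual harmonicity (used already in (\ref{easyestimate})) and an index computation for the operator $\sigma \mapsto d\sigma$ in order to bound the space of solutions by $2$. I expect this last bound to be the genuine difficulty, which is why one currently controls only the metric-related case (Theorem \ref{hJ-}, where Gauduchon metrics make $\mathcal N$ explicit) and the $b^+ = 1$ case of the companion conjecture (Theorem \ref{int3}); a full proof of Conjecture \ref{conj2} seems to require a global input beyond the pointwise and elliptic structure described here.
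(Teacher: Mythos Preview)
This statement is a \emph{conjecture} in the paper, not a theorem; the paper offers no proof, only partial evidence (Theorem~\ref{hJ-}) and an explicit open question (Question~\ref{geq2}) asking whether non-integrable examples with $h_J^- \ge 3$ exist. So there is no proof in the paper to compare your proposal against.

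Your proposal is, by your own admission, not a proof either. The reformulation you carry out is correct: writing $\beta = \sigma + \bar\sigma$ with $\sigma \in \Omega^{2,0}$, closedness of $\beta$ forces $\bar\partial\sigma = -\mu\bar\sigma$, and then $d\sigma = \bar\mu\sigma - \mu\bar\sigma$ is indeed an order-zero expression governed by the Nijenhuis tensor. But this reformulation does not actually gain ground. You observe yourself that, by unique continuation for harmonic self-dual forms, vanishing of $\sigma$ on the open set $U = \{N \ne 0\}$ forces $\beta \equiv 0$; in other words, the evaluation map $\mathcal Z_J^- \to \Gamma(L|_U)$ is \emph{injective} whenever $J$ is non-integrable. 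Consequently the ``rank $\le 2$'' bound you seek on its image is literally the statement $h_J^- \le 2$, i.e.\ the conjecture itself. The elliptic and index-theoretic considerations you mention are natural ingredients, but you give no mechanism by which they would produce this bound; the self-adjoint operator $P$ of Lemma~\ref{mehdi} has index zero, so index theory alone says nothing about the size of its kernel. One further small point: the constraint $d(\mathcal N\sigma_i) = 0$ you list as additional leverage is vacuous, since by your own identity $\mathcal N\sigma_i = d\sigma_i$, and hence this is just $d^2 = 0$.

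In short, the conjecture remains open both in the paper and in your proposal; your reduction, while a reasonable way to organize the problem, is a restatement rather than a step toward a proof.
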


\noindent In the case $b^+=1$, Conjecture \ref{conj1} is proved in
Theorem \ref{int3}. Theorem \ref{hJ-} is a further
partial answer and motivation for both conjectures.

\vspace{0.2cm}

We end this section by establishing a path-wise semi-continuity property for
$h^{\pm}_J$ on a compact 4-manifold. This result was pointed out
to the first author by Vestislav Apostolov.

\begin{theorem} \label{semicont-hJt}
Let $M$ be a compact 4-manifold and let $J_t$, $t \in [0,1]$ be a
smooth family of almost complex structures on $M$. Then $h^-_{J_t}$
(resp. $h^+_{J_t}$) is an upper-semi-continuous (resp.
lower-semi-continuous) function in $t$. That is, for any $t
\in [0,1]$ there exists $\epsilon > 0$ such that if $s \in [0,1]$,
$|s - t| < \epsilon$,
\[h_{J_s}^- \leq h_{J_{t}}^- \; , \; \; h_{J_s}^+ \geq h_{J_{t}}^+ .\]
\end{theorem}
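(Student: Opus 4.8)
The plan is to use Lemma~\ref{mehdi} to reformulate the semi-continuity of $h^-_{J_t}$ as the upper-semi-continuity of the dimension of the kernel of a smooth family of elliptic operators, and then invoke the classical result of Kodaira--Morrow on the upper-semi-continuity of kernel dimensions. First I would fix a smooth family $J_t$ of almost complex structures and choose a smooth family of compatible Riemannian metrics $g_t$ (for instance, starting from any fixed metric $g$ and averaging as $g_t(\cdot,\cdot) = \tfrac12\big(g(\cdot,\cdot) + g(J_t\cdot, J_t\cdot)\big)$, which depends smoothly on $t$). Let $\omega_t$ be the associated fundamental forms. By Lemma~\ref{mehdi}, for each $t$ the space $H^-_{J_t}$ is isomorphic to $\ker P_t$, where $P_t : \Omega^-_{J_t} \to \Omega^-_{J_t}$ is the self-adjoint strongly elliptic operator $P_t(\psi) = (d\delta^{g_t}\psi)''_t$, and the coefficients of $P_t$ depend smoothly on $t$.

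The main technical subtlety is that the \emph{bundle} $\Lambda^-_{J_t}$ on which $P_t$ acts itself varies with $t$, so $P_t$ is not literally a family of operators on a fixed vector space of sections. To apply the Kodaira--Morrow semi-continuity theorem cleanly, I would transport everything to a fixed bundle. Concretely, since $\Lambda^-_{J_t}$ is a smooth subbundle of $\Lambda^2$ varying smoothly in $t$, the pointwise orthogonal projections $\pi_t : \Lambda^2 \to \Lambda^-_{J_t}$ give, for $t$ near a fixed $t_0$, a smooth bundle isomorphism $\Phi_t : \Lambda^-_{J_{t_0}} \to \Lambda^-_{J_t}$ (the restriction of $\pi_t$, which is an isomorphism for $t$ close to $t_0$). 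Conjugating, I set $\tilde P_t = \Phi_t^{-1} \circ P_t \circ \Phi_t : \Omega^-_{J_{t_0}} \to \Omega^-_{J_{t_0}}$. This is now a genuine smooth family of strongly elliptic operators on the single fixed bundle $\Lambda^-_{J_{t_0}}$, with $\dim \ker \tilde P_t = \dim \ker P_t = h^-_{J_t}$ for all $t$ near $t_0$.

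The decisive step is then the semi-continuity itself: for a smooth family of elliptic operators, the kernel dimension is upper-semi-continuous in the parameter. This is the content of the Kodaira--Morrow result; it follows because for a self-adjoint elliptic operator the nonzero part of the spectrum is bounded away from $0$, and small perturbations cannot create new small eigenvalues, so eigenvalues can only leave the kernel (or stay) as $t$ varies, never appear from nowhere. Hence there exists $\epsilon>0$ such that $h^-_{J_s} = \dim\ker\tilde P_s \le \dim\ker\tilde P_{t_0} = h^-_{J_{t_0}}$ for $|s-t_0|<\epsilon$, which is exactly the claimed upper-semi-continuity of $h^-_{J_t}$.

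Finally, the statement about $h^+_{J_t}$ is immediate from equation~\eqref{h^+_+ h^-}: since $h^+_{J_t} + h^-_{J_t} = b_2$ is a topological constant independent of $t$, the lower-semi-continuity of $h^+_{J_t}$ is equivalent to the upper-semi-continuity of $h^-_{J_t}$ just established, so $h^+_{J_s} = b_2 - h^-_{J_s} \ge b_2 - h^-_{J_{t_0}} = h^+_{J_{t_0}}$ on the same neighborhood. I expect the only real obstacle to be the bundle-transport bookkeeping in the second paragraph—verifying that $\Phi_t$ is smooth in $t$ and that conjugation preserves strong ellipticity with smoothly varying coefficients—since the appeal to Kodaira--Morrow and the final $b_2$ accounting are both routine once the family is set up on a fixed bundle.
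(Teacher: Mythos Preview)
Your proposal is correct and follows essentially the same approach as the paper: invoke Lemma~\ref{mehdi} to identify $H^-_{J_t}$ with the kernel of a family of elliptic operators, apply the Kodaira--Morrow semi-continuity theorem, and deduce the statement for $h^+_{J_t}$ from the relation $h^+_{J_t}+h^-_{J_t}=b_2$ (equivalently, Theorem~\ref{pf-dim4}). You have simply made explicit the choice of metrics $g_t$ and the bundle-transport bookkeeping that the paper's two-line proof leaves implicit.
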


\begin{proof} The statement about $h^-_{J_t}$ follows directly from Lemma \ref{mehdi} and
a classical result of Kodaira and Morrow showing the
upper-semi-continuity of the kernel of a family of elliptic
differential operators (Theorem 4.3 in \cite{KM}). The statement
on $h^+_{J_t}$ follows from Theorem \ref{pf-dim4}.
\end{proof}

\noindent The following immediate corollary sheds some light on Conjecture \ref{conj1}
and on the density statement in Theorem \ref{hJ-}.

\begin{cor} \label{hJ-=0} If $(M^4, J)$ is a compact almost complex manifold with $h^-_J = 0$ and
$J_t$ is a deformation of $J$, then for small $t$, $h^-_{J_t} = 0$.
\end{cor}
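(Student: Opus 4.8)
The plan is to read this off directly from the upper-semi-continuity established in Theorem \ref{semicont-hJt}. First I would normalize the deformation so that $J_0 = J$, viewing $J_t$ as a smooth family of almost complex structures on the compact $4$-manifold $M$ for $t$ in a neighborhood of $0$ (after reparametrizing, or restricting to a small half-interval $t \in [0, \epsilon_0)$, so that the hypotheses of Theorem \ref{semicont-hJt} apply). Applying that theorem at the base point $t = 0$ then produces an $\epsilon > 0$ such that $h^-_{J_t} \leq h^-_{J_0}$ whenever $|t| < \epsilon$.

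The remaining step is to combine this inequality with the hypothesis $h^-_{J_0} = h^-_J = 0$, which gives $h^-_{J_t} \leq 0$ for all sufficiently small $t$. Since $h^-_{J_t}$ is by definition the dimension of the real subspace $H^-_{J_t} \subset H^2(M; \mathbb{R})$, it is a nonnegative integer, so the bound $h^-_{J_t} \leq 0$ forces $h^-_{J_t} = 0$. There is no genuine obstacle here: the entire content resides in Theorem \ref{semicont-hJt}, and the corollary is merely the special case in which the upper-semi-continuous function $t \mapsto h^-_{J_t}$ starts at $0$, its least possible value, so it has nowhere to drop and must stay constant near the base point. Equivalently, one could argue directly through Lemma \ref{mehdi}, identifying $h^-_{J_t}$ with the dimension of the kernel of the elliptic operator $P$ attached to $(g_t, J_t)$ for a smoothly varying compatible metric $g_t$ and invoking the upper-semi-continuity of kernels of a smooth family of elliptic operators; but this is precisely the mechanism already used to prove Theorem \ref{semicont-hJt}.
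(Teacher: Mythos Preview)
Your proposal is correct and matches the paper's approach exactly: the corollary is stated there as an immediate consequence of Theorem~\ref{semicont-hJt} with no further argument, and your write-up simply spells out that specialization.
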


\begin{remark} {\rm In Theorem \ref{deformation}, we establish a stronger semi-continuity
property for $h_J^{\pm}$ near an almost complex structure which
admits a compatible symplectic form. Theorems \ref{semicont-hJt} and
\ref{deformation} are no longer true in higher dimension, as recent
examples of Angella and Tomassini imply (see Propositions 4.1, 4.3 and
Examples 4.2, 4.4 in \cite{AT2}). Note that their Example 4.2 shows
that the semi-continuity property fails in dimensions higher than 4,
even if one has a path of almost complex structures which are
$C^{\infty}$-pure and full.}
\end{remark}

\section{Computations of $h^{-}_J$}

\subsection{Generic vanishing of $h_J^-$ when $b^+=1$}
In this subsection we confirm Conjecture \ref{conj1} when $b^+=1$.

\begin{theorem}\label{int3}  Suppose $M$ is a compact $4-$manifold with
$b^+=1$ admitting almost complex structures. The almost complex
structures $J$ with $h^-_J = 0$ form an open and dense subset in the
set of all almost complex structures on $M$, with
$C^{\infty}$-topology.
\end{theorem}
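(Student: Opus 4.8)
The plan is to use the hypothesis $b^+=1$ to trap $H_J^-$ inside the one-dimensional space of self-dual harmonic forms, thereby reducing the vanishing of $h_J^-$ to a pointwise condition on the fundamental form that a small local rotation of $J$ can violate. Openness is essentially free: by (\ref{easyestimate}) we have $h_J^-\le b^+=1$, so $h_J^-\in\{0,1\}$, and the set $\{J:h_J^-=0\}$ is open in the $C^\infty$-topology by Corollary \ref{hJ-=0} (equivalently, by the upper semi-continuity of $h_J^-$ in Theorem \ref{semicont-hJt}). Thus the statement reduces to density: every $J$ with $h_J^-=1$ should be a $C^\infty$-limit of almost complex structures with $h_J^-=0$.

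Fix such a $J$ and a compatible metric $g$, with fundamental form $\omega=\omega_J$. Any closed $J$-anti-invariant $2$-form is self-dual, hence $g$-harmonic, so $\mathcal Z_J^-\subseteq \mathcal H_g^+$, the space of self-dual harmonic $2$-forms, which has dimension $b^+=1$; let $\phi$ be a generator. By (\ref{type-Jinv})--(\ref{type-sdasd}), $\Lambda_J^-$ is exactly the pointwise $g$-orthogonal complement of $\omega$ inside $\Lambda_g^+$. Therefore $h_J^-=1$ if and only if $\phi\in\Omega_J^-$, that is, if and only if $\langle\phi,\omega\rangle\equiv 0$ pointwise on $M$. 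This equivalence is the heart of the matter: the cohomological condition $h_J^-=1$ becomes a pointwise orthogonality condition between the fixed harmonic generator and the fundamental form.

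Now I perturb $\omega$ while keeping $g$ fixed. Since $\phi\not\equiv 0$, there is a point $p$ and a neighborhood $U$ on which $\phi\neq 0$. Choose a bump function $\theta$ supported near $p$ inside $U$ and set
\[
\tilde\omega=\cos(\theta)\,\omega+\sin(\theta)\,\sqrt{2}\,\frac{\phi}{|\phi|}.
\]
Because $\omega\perp\phi$ pointwise and $|\omega|=\sqrt2$, the form $\tilde\omega$ is again a self-dual form of constant norm $\sqrt2$, hence the fundamental form of a unique $g$-compatible almost complex structure $\tilde J$; for $\theta$ small, $\tilde J$ is $C^\infty$-close to $J$ and agrees with $J$ off $U$. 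Keeping $g$ fixed freezes $\mathcal H_g^+=\mathbb R\,\phi$, while $\langle\phi,\tilde\omega\rangle=\sqrt2\,\sin(\theta)\,|\phi|\not\equiv 0$; thus $\phi\notin\Omega_{\tilde J}^-$, so $\mathcal Z_{\tilde J}^-=0$ and $h_{\tilde J}^-=0$. Letting $\theta\to 0$ produces the desired approximating family, which gives density, and together with openness completes the proof.

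The main obstacle is conceptual rather than computational: one must recognize that $b^+=1$ collapses $H_J^-$ onto the single harmonic generator $\phi$, turning $h_J^-=1$ into the pointwise condition $\phi\perp\omega$. The one point requiring care is that the perturbation must fix the metric $g$, so that $\phi$ does not move and no new anti-invariant harmonic forms can appear, with only the compatible $\omega$ being rotated toward $\phi$ on a small set where $\phi\neq 0$.
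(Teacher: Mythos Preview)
Your density argument is correct and essentially identical to the paper's: you fix the metric $g$, rotate $\omega$ toward the harmonic generator $\phi$ on a small set, and observe that since $\mathcal Z_{\tilde J}^-\subseteq\mathcal H_g^+=\mathbb R\phi$ while $\langle\phi,\tilde\omega\rangle\not\equiv 0$, the anti-invariant cohomology must vanish. The paper writes the perturbation as $\tilde\omega=f\omega+r\alpha$ with the norm condition \eqref{squarenorm2}, you write it as a trigonometric rotation; this is purely cosmetic.

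The gap is in your openness argument. Corollary~\ref{hJ-=0} and Theorem~\ref{semicont-hJt} are stated only for \emph{smooth one-parameter families} $J_t$; they give path-wise upper semi-continuity, not openness in the full $C^\infty$-topology on $\mathcal J$. In an infinite-dimensional Fr\'echet manifold these are genuinely different, and the paper is careful about this distinction (cf.\ the discussion of weak vs.\ strong neighborhoods in \S\ref{ngh}). Accordingly, the paper does \emph{not} cite Corollary~\ref{hJ-=0} for openness; instead it proves directly that the complement $\{J:h_J^-=1\}$ is sequentially closed: given $J_k\to J$ with $h_{J_k}^-=1$, it builds compatible metrics $g_k\to g$, takes the $g$-harmonic self-dual generator $\psi$, tracks its $g_k$-Hodge decomposition to produce $g_k$-harmonic self-dual forms converging to $\psi$, and uses $b^+=h_{J_k}^-=1$ to conclude these are $J_k$-anti-invariant, hence $\psi$ is $J$-anti-invariant in the limit. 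You should either supply such a sequential argument or justify why the Kodaira--Morrow upper semi-continuity actually holds for sequences (it does, via standard Fredholm theory, but this is not what the cited statements say).
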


\subsubsection{Topology of the space of almost complex structures} \label{topJs}

Let $\mathcal{J}=\mathcal{J}^\infty$ be the  space of $C^{\infty}$ almost complex structures.
Let us first describe the $C^{\infty}$ topology of $\mathcal J^{\infty}$.

It is well known that the space $\mathcal J^l$ of $C^l$ almost complex
structures has a natural separable Banach manifold structure via the $C^l$ norm (see
\cite{MS} for example).
The natural $C^\infty$ topology on $\mathcal J^{\infty}$ is induced by
the sequence of semi-norms $C^0, C^1,\cdots,C^l,\cdots$.
Locally, near a $C^{\infty}$ almost complex structure $J$,
$\mathcal{J}$ is a subspace of $\mathcal{J}^l$ with finer topology.

With the $C^{\infty}$ topology, $\mathcal{J}=\mathcal{J}^\infty$ is  a
Fr\'echet manifold.
A complete metric inducing the $C^{\infty}$  topology on it can  be
defined by
\begin{equation}\label{metric-top}
d(x,y)=\sum_{k=0}^\infty \frac{\|x-y\|_k}{1+\|x-y\|_k}\;2^{-k}.
\end{equation}
Here $\|\cdot \|_k$ represents the $C^k$ semi-norm on it.

\subsubsection{The space of $g-$compatible almost complex
structures}
To prove the density statement in Theorem \ref{int3} we need to consider the space of
almost complex structures compatible with a fixed Riemannian metric $g$.
This can be described as
the space of $g$-self-dual 2-forms $\omega$ satisfying $|\omega|^2_g
= 2$ point-wise on $M$ (equivalently, the space of smooth sections of
the twistor bundle associated to $(M,g)$).
The $C^{\infty}$-topology on this subspace corresponds to
$C^{\infty}$-topology on the space of 2-forms.

Suppose  we also fix a $g-$compatible pair $(J, \omega)$. Then  any
$g-$compatible almost complex structure corresponds to a $2-$form
\begin{equation} \label{ombeta}
\tilde \omega = f \omega + \beta, \; \mbox{with $\beta \in
\Omega^-_J$, $f \in C^{\infty}(M)$ so that $2f^2 + |\beta|^2 = 2$. }
\end{equation}
For us, the following variation will be useful, extending an idea
from \cite{Lee}. Suppose further  a section $\alpha \in \Omega^-_J$
is given.  One can define new $g$-compatible almost complex
structures as follows: pick smooth functions $f$ and $r$ on $M$ so
that the form
\begin{equation} \label{om(f,h,alpha)}
\tilde \omega = f \omega + r \alpha
\end{equation} satisfies
$|\tilde \omega|^2_g = 2$, and let $\tilde J$ be the almost complex
structure defined by $(g, \tilde \omega)$. Equivalently, $f$ and $r$
should satisfy the pointwise condition
\begin{equation} \label{squarenorm2}
2f^2 + r^2 |\alpha|_g^2 = 2 .
\end{equation}
For any $\alpha \in \Omega^-_J$, one can find such functions $f$ and
$r$. For instance, take $r$ to be small enough so that $r^2
|\alpha|_g^2 < 2$ everywhere; then $f$ is determined up to sign
by $f = \pm (1 - \frac{1}{2} r^2 |\alpha|_g^2)^{1/2}$. Junho Lee's
almost complex structures $J_{\alpha}$ (see \cite{Lee}) are obtained
for the specific choice \footnote{There is a factor ``2'' difference
in the convention for the norm of a two form between our paper and
\cite{Lee}. For us, if $(g,J, \omega)$ is a 4-dimensional almost
Hermitian structure, $|\omega|^2_g = 2$, whereas in \cite{Lee},
$|\omega|^2_g = 1$. This explains the apparent difference between
our $r$ and $f$ and those in Proposition 1.5 in \cite{Lee}.}
\begin{equation} \label{JLhf}
r = \frac{4}{2+|\alpha|^2} \; \mbox{ and } \; f =
\frac{2-|\alpha|^2}{2 + |\alpha|^2} \; .
\end{equation}
Note that we actually get a pair of almost complex structures
$J^{\pm}_{\alpha}$, as for the above choice of $r$, we have the sign
freedom in choosing $f$. Junho Lee defines these almost complex
structures on a K\"ahler surface $(M,J,g)$ and uses them as a tool
for an easier computation of the Gromov-Witten invariants.
Particularly important in his work are the almost complex structures
$J_{\alpha}$ corresponding to {\it closed} $\alpha$'s, i.e
$\alpha\in \mathcal{Z}^-_J$.

\vspace{0.1cm}

Another natural choice for $(r,f)$ is
\begin{equation} \label{hf}
r = \pm f = \pm \frac{\sqrt{2}}{\sqrt{2 + |\alpha|^2}} \; .
\end{equation}
This corresponds to almost complex structures that arise from the
forms $\pm \omega + \alpha$, conformally rescaled to satisfy the
norm condition.

\vspace{0.1cm}

Even more generally, given $\alpha$, we may choose $r$ so that $r^2
|\alpha|_g^2 \leq 2$, with equality at some points, but then at such
points we have to require the smoothness of the function $(1 -
\frac{1}{2} r^2 |\alpha|_g^2)^{1/2}$. Note also that if such points
exists, then we no longer have an ``up to sign choice'' for $f$
overall.

\vspace{0.1cm}

Finally, note that we can (and will) choose $r$ to satisfy
$r^2 |\alpha|_g^2 < 2$ and be supported on a small open set in $M$.
Then, for $f = (1 - \frac{1}{2} r^2 |\alpha|_g^2)^{1/2}$,
the new almost complex structure $\tilde J$ coincides with
$J$ outside the support of $r$.

\vspace{0.2cm}

\subsubsection{Proof of Theorem \ref{int3}}

\begin{proof} First we show the density.
Let $J$ be an almost complex structure on $M$. It follows from
\eqref{type-sdasd} that $h_J^- \in \{0, 1\}$. If $h^-_J = 0$,
Corollary \ref{hJ-=0} shows that in any neighborhood of $J$ there
are other almost complex structures $\tilde J$ with $h_{\tilde
J}^-=0$. If $h^-_J = 1$, let $\alpha \in \mathcal{Z}^-_J$,
normalized so that $\int_M \alpha^2 = 1$. Pick a $J$-compatible
metric $g$ and let $\omega$ be the fundamental form associated to
$(g, J)$. The form $\alpha$ is $g$-harmonic and point-wise
orthogonal to $\omega$. As in \eqref{om(f,h,alpha)}, let $\tilde
\omega = f \omega + r \alpha$, for some functions $f, r$ satisfying
\eqref{squarenorm2}, and define $\tilde J$, the almost complex
structure induced by $(g, \tilde \omega)$. If $r\not\equiv 0$, it is
clear that $h_{\tilde J}^-=0$, as $\tilde \omega$ is no longer
point-wise orthogonal to $\alpha$. On the other hand, we can choose
$r$ to be compactly supported on a small set, so $\tilde J$ can be
arbitrarily close to $J$.

For openness, we prove that the complement is closed. Let $J_k$ be a
sequence of almost complex structures with $h^-_{J_k} = 1$
converging to the almost complex structure $J$. 
Let $g$ be a $J$-compatible Riemannian
metric and let
$$g_k (\cdot, \cdot) = \frac{1}{2} (g(\cdot, \cdot) + g(J_k\cdot, J_k\cdot)) .$$
Clearly, $g_k$ is a Riemannian metric compatible with $J_k$ and
$(g_k, J_k)$ converges to $(g,J)$. Denote by $\Delta^k$ the
Hodge-DeRham Laplace operator associated to $g_k$ and by
$\mathbb{G}^k$ the Green operator associated to $\Delta^k$.

Let $\psi$ be a non-zero $g$-harmonic, self-dual two form, normalized so that
$\int_M \psi^2 = 1$ (up to sign, $\psi$ is unique with these properties, as $b^+ = 1$).

Consider the Hodge decomposition of the 2-form  $\psi$ with
respect to each of the metrics $g_k$.
$$\psi = (\psi - \mathbb{G}^k (\Delta^k \psi)) +
\mathbb{G}^k (\Delta^k \psi) = \psi_{h, k} + \psi_{ex, k} ,$$
where $\psi_{h, k} = \psi - \mathbb{G}^k (\Delta^k \psi)$
denotes the $g_k$-harmonic part of $\psi$ and $\psi_{ex, k} =
\mathbb{G}^k (\Delta^k \psi)$ is the $g_k$-exact part of $\psi$.
Since $g_k \rightarrow g$ and $\Delta^g \psi = 0$, this implies
$$\psi_{h, k} \rightarrow \psi \; \; \; , \; \; \psi_{ex, k} \rightarrow
0, \mbox{ as $k \rightarrow \infty$.} $$ Moreover, if
$(\psi_{k,h})^+$ denotes the $g_k$-self-dual part of
$\psi_{k,h}$, we have
$$(\psi_{k, h})^+ \rightarrow \psi  \; . $$
But since $b^+ = h^-_{J_k} = 1$, the $g_k$-harmonic, self-dual forms
$(\psi_{k, h})^+$ are $J_k$-anti-invariant. Since $J_k \rightarrow
J$, it follows that $\psi$ must be $J$-anti-invariant. Thus,
$h^-_J =1$.
\end{proof}

\begin{remark} \label{expFT} {\rm There exist compact almost complex 4-manifolds $(M, J)$ with
$b^+ = 1$ and $h^-_J = 1$. Proposition 6.1 of \cite{FT} contains one such example (see also
Proposition \ref{expwb} (iii) in this paper, where this example appears in a different context).
Note also that any such almost complex structure cannot be tamed by a symplectic form,
as a consequence of Theorem 3.3 of \cite{DLZ}.}
\end{remark}

\subsection{When $J$ is integrable} If $(M^4,J)$ is a compact complex surface, it follows from
(\ref{same}) that $h_J^{\pm}$ are the same as the dimensions of the corresponding
Dolbeault groups
\begin{equation} \label{Jint-hpmcx} h_J^+=h_{\bar\partial}^{1,1}, \quad
h_J^-=2h_{\bar\partial}^{2,0}.
\end{equation}
Together with the signature theorem (Theorem 2.7 in \cite{BPV}), we
get
\begin{equation} \label{Jint-hpm}
h^+_J  =\left\{ \begin{array}{ll}
b^- +1&\hbox{if $b_1$ even} \\
b^- &\hbox{if $b_1$ odd,} \end{array} \right. \quad h^-_J = \left\{
\begin{array}{ll}
b^+ -1&\hbox{if $b_1$ even} \\
b^+ &\hbox{if $b_1$ odd.}  \end{array} \right.
\end{equation}
It is a deep, but now well known fact that the cases $b_1$ even/odd
correspond to whether the complex surface $(M,J)$ admits or not a
compatible K\"ahler structure. We observe that there is a more direct
proof for the following weaker statement.

\begin{prop} \label{tamecx} Let $(M,J)$ be a compact complex surface.
The following are equivalent:

\vspace{0.1cm}

(i) $b_1$ is even; (ii) $b^+ = h^-_J + 1 = 2 h^{2,0}_{\bar \partial} +1$; (iii) $J$ is tamed.

\vspace{0.1cm}

\noindent Similarly, the following are equivalent:

\vspace{0.1cm}

(i') $b_1$ is odd; (ii') $b^+ = h^-_J = 2 h^{2,0}_{\bar \partial}$; (iii') $J$ is not tamed.
\end{prop}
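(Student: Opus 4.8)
The plan is to establish the unprimed chain (i)$\Leftrightarrow$(ii)$\Leftrightarrow$(iii) first; the primed statements (i')$\Leftrightarrow$(ii')$\Leftrightarrow$(iii') then drop out automatically, since $b_1$ is either even or odd, since $h^-_J$ equals either $b^+-1$ or $b^+$, and since $J$ is either tamed or not. To begin, (i)$\Leftrightarrow$(ii) is essentially free. By \eqref{Jint-hpmcx} a compact complex surface always satisfies $h^-_J = 2h^{2,0}_{\bar\partial}$, while the signature theorem (\cite{BPV}, as recorded in \eqref{Jint-hpm}) gives $b^+ = 2h^{2,0}_{\bar\partial}+1$ when $b_1$ is even and $b^+ = 2h^{2,0}_{\bar\partial}$ when $b_1$ is odd. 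Reading these two identities together yields (i)$\Leftrightarrow$(ii) at once, with no further input.

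For (i)$\Rightarrow$(iii) I would invoke the classical fact that a compact complex surface with $b_1$ even admits a compatible K\"ahler metric; its K\"ahler form is closed and positive on $(1,1)$-vectors, hence tames $J$. This is the only place where I am willing to use the deep half of the $b_1$-even/K\"ahler dichotomy, and the whole point of the proposition is that the reverse implication can be obtained more directly, without having to produce a K\"ahler metric first.

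For (iii)$\Rightarrow$(i), suppose $\Omega$ tames $J$ and write $\Omega = \Omega^{2,0}+\Omega^{1,1}+\Omega^{0,2}$. Taming means precisely that the real $(1,1)$-form $\Omega^{1,1}$ is positive, so it is the fundamental form of a Hermitian metric. The key algebraic observation is that $d\Omega=0$ forces $\partial\bar\partial\,\Omega^{1,1}=0$: indeed $(d\Omega)^{2,1}=\partial\Omega^{1,1}+\bar\partial\Omega^{2,0}=0$, and applying $\bar\partial$ gives $\bar\partial\partial\,\Omega^{1,1}=-\bar\partial\bar\partial\Omega^{2,0}=0$. Thus $\Omega^{1,1}$ is a Gauduchon form. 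Now I argue by contradiction: if $b_1$ were odd, then $(M,J)$ is non-K\"ahler, and by the positivity/duality theory for compact complex surfaces (Harvey--Lawson--Sullivan duality in the effective form due to Gauduchon and Lamari, which is exactly where the Gauduchon metrics enter) there exists a nonzero positive $\partial\bar\partial$-exact $(1,1)$-current $T=i\partial\bar\partial S$. Pairing it against $\Omega^{1,1}$ gives a contradiction: on one hand $\langle T,\Omega^{1,1}\rangle>0$, because a nonzero positive current pairs strictly positively with a strictly positive form; on the other hand
\[ \langle T,\Omega^{1,1}\rangle = \langle i\partial\bar\partial S,\Omega^{1,1}\rangle = \langle S,\, i\partial\bar\partial\,\Omega^{1,1}\rangle = 0, \]
since $\partial\bar\partial\,\Omega^{1,1}=0$. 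Hence $b_1$ must be even, which proves (iii)$\Rightarrow$(i) and, contrapositively, that $b_1$ odd rules out any taming form.

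The main obstacle is the single analytic input in the last step: the existence, on a non-K\"ahler compact complex surface, of a nonzero positive $\partial\bar\partial$-exact $(1,1)$-current. This is the real substance, and the reason Gauduchon metrics are unavoidable, since the relevant current is manufactured from the Lee form of a Gauduchon metric, whose non-exactness is equivalent to $b_1$ being odd. I would either cite this directly or, to keep the argument self-contained in the surface case, reprove it: take a Gauduchon representative $\omega_G$ in a compatible conformal class, study its Lee form $\theta$ (characterized by $d\omega_G=\theta\wedge\omega_G$ together with $\delta^g\theta=0$), and show that $b_1$ odd forces $\theta$ to carry a nontrivial class, from which the current is built. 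Everything else, namely the reduction (i)$\Leftrightarrow$(ii), the identity $\partial\bar\partial\Omega^{1,1}=0$, and the final pairing, is routine, and the primed equivalences follow by negating the unprimed ones.
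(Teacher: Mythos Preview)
Your argument is correct in outline, but it takes a genuinely different route from the paper's, and in fact reproduces precisely the approach the paper is offering an \emph{alternative} to.

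For (i)$\Leftrightarrow$(ii) you and the paper agree: this is immediate from \eqref{Jint-hpmcx}--\eqref{Jint-hpm} (the paper phrases (ii)$\Rightarrow$(i) via the parity relation $b_1+b^+$ odd, but it comes to the same thing). The divergence is in the remaining two implications.

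Your (i)$\Rightarrow$(iii) invokes the deep theorem that $b_1$ even implies K\"ahler; your (iii)$\Rightarrow$(i) invokes the Harvey--Lawson/Lamari current duality (the existence, on a non-K\"ahler surface, of a nonzero positive $\partial\bar\partial$-exact current). The paper deliberately avoids \emph{both} of these. Its engine is Proposition~\ref{gaud} (Gauduchon): on a compact complex surface equipped with a Gauduchon metric $g$, every $g$-harmonic self-dual $2$-form has \emph{constant} trace against $\omega$. With this in hand the paper argues as follows. For (i)$\Rightarrow$(iii): $b_1$ even forces $b^+$ odd, hence $b^+>h^-_J=2h^{2,0}_{\bar\partial}$ by parity alone; pick a harmonic self-dual $\psi$ cup-orthogonal to $H^-_J$, write $\psi=a\,\omega+\beta$ with $a$ constant (by Proposition~\ref{gaud}) and $\beta\in\Omega^-_J$; the orthogonality forces $a\neq 0$, whence $\psi$ is a taming symplectic form. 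For (iii)$\Rightarrow$(ii): a taming form $\psi$ determines a $J$-compatible conformal class; in its Gauduchon representative $\psi$ is harmonic self-dual with $\psi=a\,\omega+\beta$, $a>0$ constant, so $[\psi]\notin H^-_J$ and $b^+>h^-_J$; moreover any two harmonic self-dual forms cup-orthogonal to $H^-_J$ differ by a closed $J$-anti-invariant form, forcing them to be proportional, so $b^+-h^-_J=1$.

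What each approach buys: yours situates the result in the positive-current framework and makes the link to Harvey--Lawson explicit, at the cost of importing two substantial external theorems. The paper's approach is more self-contained---it needs only the existence of Gauduchon metrics and the constant-trace lemma, not Kodaira's classification nor current duality---and this is the point: as the paper remarks after the statement, a direct proof of (i)$\Leftrightarrow$(iii) that does not already presuppose ``$b_1$ even $\Rightarrow$ K\"ahler'' is what would combine with Donaldson's tame--compatible question to yield a new proof of that very dichotomy.
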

\noindent An almost complex structure $J$ is said to be {\it tamed} if there exists a symplectic form $\omega$
such that $\omega(X, JX) > 0$ for any non-zero tangent vector $X$. The tame-compatible question of Donaldson
\cite{D} predicts that on a compact 4-manifold any tame almost complex structure $J$ admits, in fact, a {\it compatible}
symplectic form, that is, a symplectic form $\tilde \omega$, so that $\tilde \omega(\cdot, J\cdot)$ is a
Riemannian metric.

It was first observed in \cite{LZ} using a result of \cite{HL},
that on a compact complex surface the tame condition is equivalent with
$b_1$ even. Proposition \ref{tamecx} gives a different proof of this fact.
Assuming Kodaira's classification, the tame condition is thus
equivalent with the compatibility. As Donaldson
points out, a direct confirmation of the tame-compatible question would lead to a different proof of the
fact that $b_1$ even corresponds to a complex surface of K\"ahler type. At least in the case $b^+=1$,
the tame-compatible question is known to be a consequence of the symplectic Calabi-Yau problem, also introduced by
Donaldson in \cite{D} (see also \cite{W}, \cite{TWY}, \cite{TW-survey}, and section \ref{sCY} below).

\vspace{0.1cm}

A key tool in our proof of Proposition \ref{tamecx} are the {\it Gauduchon metrics} whose
definition we recall next.
\subsubsection{Gauduchon metric}

For an (almost) Hermitian manifold $(M,
g, J, \omega)$, the {\it Lee form} $\theta$ is defined by $\theta = J\delta^g
\omega$, or, equivalently in dimension 4, by $d\omega =
\theta \wedge \omega$. It is well known that $d\theta$ is a conformal invariant.
When $J$ is integrable, the case when $\theta$ is closed (exact)
corresponds to locally (globally) conformal K\"ahler metrics.
Obviously, Hermitian metrics with $\theta=0$ are, in fact, K\"ahler
metrics.


\begin{definition} A Hermitian metric such that the Lie form is
co-closed, i.e. $\delta^g \theta = 0$, is called a  {\it Gauduchon metric}
(or {\it standard Hermitian metric}, in the original terminology of \cite{gauduchon1}).
\end{definition}


\noindent  The existence and uniqueness (up to homothety) of a Gauduchon metric in each conformal
class is shown in \cite{gauduchon1}. The result is much more general; it does not require integrability,
nor restriction to dimension 4.
For us, the key property of a (Hermitian) Gauduchon metric in dimension 4 is the following:


\noindent \begin{prop} \label{gaud} (\cite{gauduchon2}) On a compact
complex surface $M$ endowed with a Gauduchon metric $g$,
the trace of a harmonic, self-dual form is a constant.
\end{prop}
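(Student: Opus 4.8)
The plan is to reduce the statement to a maximum-principle argument for a single scalar function. First I would set up notation: let $(M,g,J,\omega)$ be the given Gauduchon surface and let $\psi$ be a harmonic self-dual $2$-form, so in particular $d\psi=0$. By \eqref{type-sdasd}, a self-dual form splits as $\psi=f\omega+\beta$ with $f\in C^\infty(M)$ and $\beta\in\Omega_J^-$, and since $\beta$ is pointwise orthogonal to $\omega$ the trace is $\langle\psi,\omega\rangle=2f$; hence it suffices to prove that $f$ is constant. Because $J$ is integrable, $\beta$ is a real form of type $(2,0)+(0,2)$, say $\beta=\sigma+\bar\sigma$ with $\sigma\in\Omega^{2,0}$, and crucially the $(1,1)$-part of the self-dual form $\psi$ is the pure multiple $f\omega$ (no primitive $(1,1)$-component).

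Next I would decompose $d\psi=0$ by bidegree. Writing $d=\partial+\bar\partial$ and noting that in complex dimension two $\partial\sigma$ is of type $(3,0)$ and hence vanishes, the type $(2,1)$ component of $d\psi=0$ reads
\[\partial f\wedge\omega+f\,\partial\omega+\bar\partial\sigma=0,\]
the $(1,2)$ component being its complex conjugate.

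Now comes the key step. Applying $\bar\partial$ to this identity and using $\bar\partial\bar\partial\sigma=0$ yields
\[\bar\partial\partial f\wedge\omega-\partial f\wedge\bar\partial\omega+\bar\partial f\wedge\partial\omega+f\,\bar\partial\partial\omega=0.\]
Here the Gauduchon hypothesis enters decisively: on a compact complex surface the condition $\delta^g\theta=0$ is equivalent to $\partial\bar\partial\omega=0$, whence $\bar\partial\partial\omega=0$ and the term $f\,\bar\partial\partial\omega$ — the \emph{only} place where $f$ would appear undifferentiated — disappears. What remains is a top-degree, hence scalar, identity
\[\bar\partial\partial f\wedge\omega=\partial f\wedge\bar\partial\omega-\bar\partial f\wedge\partial\omega.\]
The left-hand side equals $(\mathrm{tr}_\omega\bar\partial\partial f)\,\mathrm{dvol}_g$, a nonzero constant multiple of the complex Laplacian of $f$ times the volume form, while the right-hand side is linear in the first derivatives $\partial f,\bar\partial f$ with smooth coefficients built from the torsion of $\omega$. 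Thus $f$ satisfies $Lf=0$ for a second-order linear operator $L$ whose leading part is the (elliptic) complex Laplacian and which carries no zeroth-order term.

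Finally I would invoke the strong maximum principle: $L$ is elliptic with no zeroth-order term on the compact connected manifold $M$, and $L1=0$, so $Lf=0$ forces $f$ to be constant, giving $\langle\psi,\omega\rangle=2f=\mathrm{const}$ as claimed. I expect the main obstacle to be not the maximum principle itself but arranging the computation so that $L$ genuinely has no zeroth-order term: this is exactly the content of the Gauduchon condition $\partial\bar\partial\omega=0$, and without it the surviving term $f\,\bar\partial\partial\omega$ would obstruct the conclusion. A secondary point requiring care is the bidegree bookkeeping, which relies on self-duality to force the $(1,1)$-part of $\psi$ to be a pure multiple of $\omega$.
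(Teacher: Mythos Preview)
Your argument is correct: the bidegree decomposition of $d\psi=0$, the use of the Gauduchon condition in the form $\partial\bar\partial\omega=0$ to kill the zeroth-order term, and the maximum-principle conclusion are all sound. The paper does not supply its own proof of this proposition but refers to Gauduchon \cite{gauduchon2} and Apostolov--Draghici \cite{ad-dga}; your approach is essentially the standard one found in those references.
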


\noindent For the  proof of Proposition \ref{gaud}, we refer the
reader to Lemma II.3 in \cite{gauduchon2} (see also \cite{ad-dga},
Proposition 3, for a slightly different argument). The Proposition
\ref{gaud} implies that for Hodge decomposition arguments, the
Gauduchon metrics behave quite like the K\"ahler ones. This simple
fact yields good consequences.

\subsubsection{Proof of Proposition \ref{tamecx}}

\begin{proof} As we mentioned already (and is easy to check), for a complex surface
 the groups $H^{\pm}_J$ are identified with the (real) Dolbeault groups as in \eqref{same}.
Using \eqref{easyestimate}, we thus have
$$b^+ \geq h^-_J = 2 h^{2,0}_{\bar \partial} .$$
We'll show (ii) $\Rightarrow$ (i) $\Rightarrow$ (iii) $\Rightarrow$ (ii).

It is well known that for any almost complex
4-manifold, $b_1 + b^+$ is odd, thus (ii) $\Rightarrow$ (i) is obvious.

Now assume (i), which is equivalent with $b^+$ odd, by the above observation. It follows that
$b^+ > h^-_J = 2 h^{2,0}_{\bar \partial}$. Choose a $J$-compatible conformal class
and let $g$ be the Gauduchon metric with total volume one in this class; denote by $\omega$ the fundamental 2-form
induced by $(g,J)$. Let $\psi$ be a non-trivial harmonic self-dual
2-form, whose cohomology class $[\psi]$ is cup-product
orthogonal to $H^-_J$ (such $\psi$ exists because $b^+ > h^-_J$). From
Proposition \ref{gaud} and \eqref{type-sdasd}, $\psi$ decomposes
as
\begin{equation} \label{thetadecomp}
\psi = a \, \omega +  \beta, \mbox{ with $a$ constant and } \beta
\in \Omega^-_J \; .
\end{equation}
The constant $a$ is non-zero, by the assumption
that $[\psi]$ is cup product orthogonal to $H^-_J$. This implies right away that
$\psi$ is symplectic (as $\beta$ is self-dual and point-wise orthogonal to $\omega$).
By eventually replacing $\psi$ by $-\psi$, we can assume also that
$a>0$, so $J$ is tamed (by $\psi$ or $-\psi$). Thus, we proved (i) $\Rightarrow$ (iii).

Next, suppose that $\psi$ is a symplectic form
that tames $J$. As pointed out in \cite{D},  $\mathbb{R} \, \psi + \Lambda^-_J$ is
is a 3-dimensional bundle on $M$, positive-definite with respect to the wedge pairing and the volume form
$\psi^2$. This induces a $J$-compatible conformal class. Let $g$ be the Gauduchon metric
in this class and denote again by $\omega$ the fundamental form of $(g,J)$.
The form $\psi$ is $g$-self-dual and closed, thus it is harmonic. Then relation \eqref{thetadecomp} holds,
with $a>0$, by the assumption that $\psi$ tames $J$. It follows that $[\psi] \not\in H^-_J$, thus
$b^+ > h^-_J$. Now assume that $\psi_1$ and $\psi_2$ are harmonic self-dual
2-forms, whose cohomology classes $[\psi_1]$, $[\psi_2]$ are cup-product
orthogonal to $H^-_J$. As above,
$$ \psi_1 = a_1 \, \omega +  \beta_1 \; , \; \; \psi_2 = a_2 \, \omega +  \beta_2 ,$$
with $a_1, a_2$ non-zero constants and $\beta_1, \beta_2 \in \Omega^-_J$.
But then $ a_2 \, \psi_1 - a_1 \, \psi_2 = a_2 \, \beta_1 - a_1 \, \beta_2 $
is $J$-anti-invariant and closed. Together with the assumptions that $[\psi_1]$, $[\psi_2]$ are cup-product
orthogonal to $H^-_J$, this can happen only if
$$ a_2 \, \psi_1 - a_1 \, \psi_2 \equiv 0 \; . $$
Thus $b^+ - h^-_J = 1$, so  (iii) $\Rightarrow$ (ii) is proved.

Remark that the proof shows that (i), (ii), (iii) are also equivalent to (iv) $b^+ > h^-_J$.
The equivalence of (i'), (ii'), (iii') is then the negation of the above.
\end{proof}


\subsection{Comparing metric related almost complex structures}
Notice that when $J$ is integrable the dimensions $h^{\pm}_J$ are
topological invariants. Such a property is certainly no longer true
for general almost complex structures. However, we are still able to
calculate the exact value of $h_J^{\pm}$ for almost complex
structures which are metric related to integrable ones.
To achieve
this we first derive some general results about metric related
almost complex structures.
\subsubsection{Estimates for $g-$related almost complex structures}
We again fix a Riemannian metric $g$.

\begin{definition} Suppose $J$ and
$\tilde J$ are  two almost complex structures  inducing the same
orientation on a 4-manifold $M$.  $J$ and $\tilde J$ are said to be
{\it $g-$related} if they are both compatible with  $g$.
\end{definition}

\noindent It is clear that if $g$ has this property, then so does any metric from its
conformal class. Also, if $J$ and $\tilde J$ are  $g-$related then
\[\Lambda^-_{J} + \Lambda^-_{\tilde J} \subset \Lambda_g^+,
\mbox{ and hence } H^-_{J} + H^-_{\tilde J} \subset \mathcal H_g^+
.\] Recall that since any closed $J$-anti-invariant form is
harmonic, self-dual, we can identify $H^-_J$ with $\mathcal{Z}^-_J$
and see it as a subspace of $\mathcal H_g^+$ (the space of harmonic,
self-dual forms).

\noindent The following  observation is the key for the computations of
$h^{\pm}_J$ we achieve in this section.

\begin{prop} \label{int1}
Suppose $J$ and $\tilde J$ are $g-$related almost complex structures
on a connected 4-manifold $M$, with $\tilde J \not\equiv \pm J$.
Then ${\rm dim} \;(H^-_{J} \cap H^-_{\tilde J}) \leq 1$.
\end{prop}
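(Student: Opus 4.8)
The plan is to argue by contradiction, assuming $\dim(H^-_{J}\cap H^-_{\tilde J})\geq 2$. Using the identification of $H^-_J$ with $\mathcal{Z}^-_J$ (the closed $J$-anti-invariant forms, all of which are $g$-harmonic and self-dual), and likewise for $\tilde J$, I would pick two linearly independent forms $\beta_1,\beta_2$ that are simultaneously closed, $J$-anti-invariant and $\tilde J$-anti-invariant. Each $\beta_i$ is then a nonzero $g$-harmonic self-dual $2$-form lying pointwise in both $\Lambda^-_J$ and $\Lambda^-_{\tilde J}$.

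The first step is a pointwise linear-algebra observation. Let $\omega,\tilde\omega$ be the fundamental forms of $(g,J)$ and $(g,\tilde J)$. By \eqref{type-sdasd}, inside the rank-$3$ bundle $\Lambda^+_g$ the subbundles $\Lambda^-_J$ and $\Lambda^-_{\tilde J}$ are exactly the orthogonal complements of $\omega$ and $\tilde\omega$. On the open set $U=\{p:\tilde J_p\neq\pm J_p\}$, which is nonempty since $\tilde J\not\equiv\pm J$, the forms $\omega_p$ and $\tilde\omega_p$ are linearly independent (they have equal norm, so any proportionality would force $\tilde\omega_p=\pm\omega_p$, i.e. $\tilde J_p=\pm J_p$). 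Hence over $U$ the intersection $\Lambda^-_J\cap\Lambda^-_{\tilde J}=(\mathrm{span}(\omega,\tilde\omega))^\perp$ is a real line bundle, and so $\beta_1,\beta_2$ are pointwise proportional there. On the open subset $U\cap\{\beta_1\neq 0\}$ I can therefore write $\beta_2=f\beta_1$ for a smooth function $f$.

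The next step turns pointwise proportionality into rigidity. Differentiating $\beta_2=f\beta_1$ and using $d\beta_1=d\beta_2=0$ gives $df\wedge\beta_1=0$; since a nonzero self-dual $2$-form in dimension $4$ is pointwise nondegenerate (symplectic), wedging with it is injective on $1$-forms, so $df=0$ and $f$ is locally constant on $U\cap\{\beta_1\neq0\}$. Choosing a point $p_0$ in this open, nonempty set---possible because a nonzero harmonic form cannot vanish on an open set---and letting $c=f(p_0)$, the form $\gamma=\beta_2-c\beta_1$ is again closed and self-dual, hence $g$-harmonic, and it vanishes identically on the connected component of $p_0$ in $U\cap\{\beta_1\neq0\}$.

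Finally, I would invoke unique continuation for harmonic forms (Aronszajn): a $g$-harmonic form vanishing on a nonempty open subset of the connected manifold $M$ vanishes everywhere, so $\gamma\equiv0$, i.e. $\beta_2=c\beta_1$, contradicting the linear independence of $\beta_1,\beta_2$. This forces $\dim(H^-_{J}\cap H^-_{\tilde J})\leq 1$. I expect the main obstacle to be precisely this last bridge: the pointwise argument only constrains the forms on $U$ and away from the zero locus of $\beta_1$, so one genuinely needs the nondegeneracy step to obtain local constancy and then unique continuation to propagate the vanishing across the zeros of $\beta_1$ and the possibly complicated shape of $U$; any more hands-on attempt to control these loci directly would be considerably messier.
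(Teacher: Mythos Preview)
Your proposal is correct and follows essentially the same argument as the paper: the pointwise observation that $\Lambda^-_J\cap\Lambda^-_{\tilde J}$ is a line over $U=\{\tilde J\neq\pm J\}$, the nondegeneracy trick $df\wedge\beta_1=0\Rightarrow df=0$, and unique continuation for harmonic forms to globalize. Your write-up is, if anything, slightly more careful about the connectedness and zero-locus bookkeeping than the paper's version.
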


\begin{proof} Let $\omega$ and $\tilde \omega$ be the corresponding self-dual
2-forms. By assumption, the set
\[ U = \{ p \in M \; | J(p) \neq \pm \tilde J(p) \} = \{ p \; | {\rm dim} \;
( {\rm Span} \{\omega(p), \tilde \omega(p) \}) = 2 \} \] is a
non-empty open set in $M$. Without loss of generality we can assume
that $U$ is connected. Otherwise, we can make the reasoning below on
a connected component of $U$.

Assume $  H^-_{J} \cap H^-_{\tilde J} \neq \{ 0 \} $ and let
$\alpha_{1}, \alpha_{2} \in  \mathcal{Z}^-_{J} \cap
\mathcal{Z}^-_{\tilde J}=H^-_{J} \cap H^-_{\tilde J}$, not
identically zero. Let $U'$ be the open subset of $U$ where neither
$\alpha_{1}$ or $\alpha_{2}$ vanishes. $U' \neq \emptyset $ because
$\alpha_{1}$ and $\alpha_{2}$ are $g$(-self-dual)-harmonic forms,
thus they satisfy the unique continuation property. Since on $U'$,
${\rm Span}\{\omega,\tilde \omega\}$ is a 2-dimensional subspace of
$\Lambda_g^+ M$ and $\alpha_{1}, \alpha_{2}$ are both orthogonal to
this subspace, there exists $f \in C^{\infty}(U')$ such that $
\alpha_{2} = f \alpha_{1}$. Since $\alpha_{1}, \alpha_{2}$ are, by
assumption, both closed, it follows that $0 = df \wedge \alpha_{1}$.
But $\alpha_{1}$ is non-degenerate on $U'$ (it is self-dual,
non-vanishing). Thus $df = 0$, so $f = const.$ on $U'$. It follows
that $\alpha_{2} = const. \; \alpha_{1}$ on $U'$, but, by unique
continuation, this holds on the whole $M$.
\end{proof}

\begin{remark}\label{openset} {\rm The estimate in Proposition \ref{int1} is
sharp. Indeed, let $(M, g, J, \omega)$ be a connected almost
Hermitian 4-manifold, and assume that $\alpha \in \mathcal{Z}^-_{J}$
is not identically zero. Consider a $g$-compatible almost complex
structure ${\tilde J}$, arising from a self-dual 2-form
\begin{equation} \label{hj-=1}
\tilde \omega = f \omega + r J\alpha \;,
\end{equation}
where $f$ and $r$ are $C^{\infty}$-functions, so that
\begin{equation} \label{norm2}
|\tilde \omega|^2_g = 2f^2 + r^2 |\alpha|^2_g = 2 \; .
\end{equation}
By \eqref{type-sdasd} applied to $(g, \tilde J, \tilde \omega)$,
observe that $\alpha$ is $\tilde J-$anti-invariant. Hence, by
Proposition \ref{int1}, $ H^-_{J} \cap H^-_{\tilde J} = {\rm
Span}([\alpha])$.
 Conversely,   any $g$-compatible $\tilde J$
such that $[\alpha] \in H^-_{J} \cap H^-_{\tilde J}$ will have a
fundamental form $\tilde \omega$ given by (\ref{hj-=1}) at least on
the open dense set $M' = M \setminus \alpha^{-1}(0)$, with functions
$f, r \in C^{\infty}(M')$ satisfying (\ref{norm2}).}
\end{remark}

\noindent Observe that compactness is not needed for Proposition
\ref{int1} or Remark \ref{openset}. In the compact case, Proposition
\ref{int1} has the following easy consequence.
\begin{cor} \label{atmostone}
In the space of  almost complex structures compatible to a given metric $g$
on a compact 4-manifold,
there is at most one $J$ such that
\begin{equation}\label{atmost} h_J^-\geq \left\{ \begin{array}{ll} \frac{b^+
+3}{2} & \hbox{if $b^+$ is odd} \\ \frac{b^+ +2}{2} & \hbox{if $b^+$
is even}.
\end{array} \right.
\end{equation}
\end{cor}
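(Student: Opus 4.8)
The plan is to argue by contradiction, combining the dimension estimate of Proposition \ref{int1} with the fact that, for every $g$-compatible $J$, the subgroup $H^-_J$ sits inside the $b^+$-dimensional space $\mathcal H_g^+$ of $g$-harmonic self-dual forms. Recall from \eqref{type-sdasd} and the discussion following it that any closed $J$-anti-invariant form is automatically $g$-harmonic and self-dual; this lets me identify $H^-_J \cong \mathcal Z_J^-$ and regard it as a linear subspace of $\mathcal H_g^+$, where $\dim \mathcal H_g^+ = b^+$. The whole argument rests on having the two spaces $H^-_J$ and $H^-_{\tilde J}$ live in a common ambient space of dimension exactly $b^+$.

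Suppose then that there were two $g$-compatible almost complex structures $J$ and $\tilde J$ with $\tilde J \not\equiv \pm J$, each satisfying the lower bound \eqref{atmost}. Since both $H^-_J$ and $H^-_{\tilde J}$ are subspaces of $\mathcal H_g^+$, the Grassmann dimension inequality gives
\[ \dim(H^-_J \cap H^-_{\tilde J}) \geq h^-_J + h^-_{\tilde J} - \dim \mathcal H_g^+ = h^-_J + h^-_{\tilde J} - b^+ . \]
I would then substitute the two cases of \eqref{atmost}: when $b^+$ is odd, both dimensions are at least $\frac{b^+ + 3}{2}$, so the right-hand side is at least $(b^+ + 3) - b^+ = 3$; when $b^+$ is even, both are at least $\frac{b^+ + 2}{2}$, so the right-hand side is at least $(b^+ + 2) - b^+ = 2$. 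In either case $\dim(H^-_J \cap H^-_{\tilde J}) \geq 2$.

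This directly contradicts Proposition \ref{int1}, which forces $\dim(H^-_J \cap H^-_{\tilde J}) \leq 1$ on the connected manifold $M$ as soon as $\tilde J \not\equiv \pm J$. Hence no two such almost complex structures can coexist, which is the assertion of the corollary (understood up to the sign ambiguity $J \mapsto -J$, which is harmless since $H^-_{-J} = H^-_J$ and hence $h^-_{-J} = h^-_J$; this indeterminacy is exactly the reason Proposition \ref{int1} excludes $\tilde J \equiv \pm J$). There is no genuinely hard step: the thresholds in \eqref{atmost} are chosen as the smallest integers making $h^-_J + h^-_{\tilde J} - b^+ \geq 2$, so the only point demanding care is the integer rounding that separates the odd and even cases and the bookkeeping of the $J \mapsto -J$ ambiguity.
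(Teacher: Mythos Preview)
Your argument is correct and is exactly the one the paper has in mind: the corollary is stated without proof as an immediate consequence of Proposition \ref{int1}, and your use of the Grassmann dimension inequality inside the $b^+$-dimensional space $\mathcal H_g^+$, together with the integer rounding that distinguishes the odd and even cases, is precisely the intended reasoning. Your remark on the $J \mapsto -J$ ambiguity is also the right way to interpret the uniqueness claim.
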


\vspace{0.1cm}

\subsubsection{Metric related almost complex structures}
\vspace{0.2cm}

\begin{definition}  Two almost complex structures $J$ and $\tilde J$
on a 4-manifold $M$ are said to be {\it metric related} if they induce the same
orientation and are $g-$related for some Riemannian metric $g$ on $M$.
\end{definition}

\noindent If we fix a volume form $\sigma$ on $M$,
two almost complex structures $J$ and $\tilde J$ are metric related
if and only if there exists a 3-dimensional sub-bundle $\Lambda^+
\subset \Lambda^2 M$, positive definite with respect to the wedge
pairing and $\sigma$, such that $\Lambda^-_{J} \subset \Lambda^+ $,
$\Lambda^-_{\tilde J} \subset \Lambda^+ $. One important
difference versus the ``$g$-related'' condition for a fixed $g$ is that the
metric related condition is not transitive.
Because of this,  Corollary \ref{atmostone}, for instance, is not
automatically clear under just the metric related assumptions.
However, Proposition \ref{int1} clearly extends to the metric
related case. One immediate consequence is:
\begin{cor}\label{12}
Suppose $J$ and $\tilde J$ are metric-related almost complex
structures on a compact 4-manifold $M$, with $\tilde J \not\equiv
\pm J$.

\vspace{0.1cm}

(i) If $h_J^-=b^+$, then $h_{\tilde J}^-\leq 1$.

(ii) If $h_J^-=b^+-1$, then $h_{\tilde J}^-\leq 2$.
\end{cor}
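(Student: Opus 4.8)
The plan is to reduce everything to a single dimension count inside the fixed space of harmonic self-dual forms. Since $J$ and $\tilde J$ are metric related, I would begin by fixing a common compatible metric $g$. As observed just before the statement of this corollary, the estimate $\dim(H^-_J \cap H^-_{\tilde J}) \leq 1$ of Proposition~\ref{int1} continues to hold in the metric related setting, so I may invoke it for this $g$. Because every closed $J$-anti-invariant (respectively $\tilde J$-anti-invariant) $2$-form is automatically $g$-harmonic and self-dual, I would identify $H^-_J$ with $\mathcal Z^-_J$ and $H^-_{\tilde J}$ with $\mathcal Z^-_{\tilde J}$, and regard both as subspaces of the space $\mathcal H_g^+$ of $g$-harmonic self-dual $2$-forms, which has dimension $b^+$.

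The key step is then a single application of the Grassmann dimension formula. Writing
\[ \dim(H^-_J + H^-_{\tilde J}) = h^-_J + h^-_{\tilde J} - \dim(H^-_J \cap H^-_{\tilde J}), \]
and noting that $H^-_J + H^-_{\tilde J} \subset \mathcal H_g^+$ forces the left-hand side to be at most $b^+$, I would rearrange to obtain
\[ h^-_{\tilde J} \leq b^+ - h^-_J + \dim(H^-_J \cap H^-_{\tilde J}) \leq b^+ - h^-_J + 1, \]
where the final inequality is the metric related version of Proposition~\ref{int1}.

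With this inequality in hand, both parts follow immediately by substitution: the hypothesis $h^-_J = b^+$ in (i) gives $h^-_{\tilde J} \leq 1$, while $h^-_J = b^+ - 1$ in (ii) gives $h^-_{\tilde J} \leq 2$. I do not expect any genuine obstacle here, since the argument is essentially linear algebra once the two supporting facts are granted. The only points deserving care are precisely those already supplied in the excerpt: that closed anti-invariant forms are harmonic and self-dual (legitimizing the identification of $H^-_J$ and $H^-_{\tilde J}$ with subspaces of the $b^+$-dimensional space $\mathcal H_g^+$, and ensuring their sum really lies inside that space), and that the intersection bound of Proposition~\ref{int1} survives the weakening from $g$-related to metric related. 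Granting these, the proof is complete.
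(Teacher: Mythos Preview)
Your argument is correct and is exactly the linear-algebra deduction the paper intends: using that $H^-_J + H^-_{\tilde J} \subset \mathcal H_g^+$ and the bound $\dim(H^-_J \cap H^-_{\tilde J}) \leq 1$ from the metric-related extension of Proposition~\ref{int1}, the Grassmann formula yields $h^-_{\tilde J} \leq b^+ - h^-_J + 1$. The paper merely calls this an ``immediate consequence'' without writing out the dimension count, so you have simply supplied the omitted details.
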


\noindent From this Corollary, one obtains immediately the claim $h_{\tilde
J}^- \in \{0, 1, 2\}$ from the statement of Theorem \ref{hJ-}. The
results in the next subsections will be more specific about when
each case occurs.

\vspace{0.2cm}

\subsection{Proof of Theorem \ref{hJ-}} \label{metint} Throughout
this subsection, unless stated otherwise,  $J$ will denote a {\it
complex} structure on a compact 4-manifold $M$. Denote by $\mathcal
J$ the space of all (smooth) almost complex structures on $M$ and by
$\mathcal{J}_J$ the set of almost complex structures which are
metric related to the fixed $J$. On both spaces $\mathcal J$ and
$\mathcal{J}_J$ we consider the $C^{\infty}$-topology.
For reasons that will be apparent soon, it is best to divide the
proof into some cases depending on the type of the surface $(M,J)$.

\subsubsection{Surfaces of non-K\"ahler type, or of K\"ahler type but with non-trivial canonical bundle}
For these we have the following result.

\begin{theorem} \label{nontrivK}
Let $(M,J)$ be a compact complex surface of non-K\"ahler type, or a
compact complex surface of K\"ahler type, but with topologically
non-trivial canonical bundle. If $\tilde J \in \mathcal{J}_J$, $\tilde J \not\equiv
\pm J$, then either (i) $h^-_{\tilde J} = 0$, or (ii) $h^-_{\tilde
J} = 1$. Case (i) occurs for an open, dense set of almost complex
structures in $\mathcal{J}_J$. Case (ii) occurs precisely when there
exist $\alpha \in \mathcal{Z }^-_{J}$ such that $ H^-_{\tilde J} = {\rm Span}([\alpha])$, so these
$\tilde J$ appear as described in Remark \ref{openset}.
\end{theorem}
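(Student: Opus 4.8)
The plan is to use the integrability of $J$ together with the vanishing of $h^{2,0}_{\bar\partial}$ in these cases to control $h^-_{\tilde J}$, and then to deploy Proposition~\ref{int1} and Remark~\ref{openset} to pin down exactly when the value $1$ is achieved. First I would compute $h^-_J$ itself. For a surface of non-K\"ahler type, $b_1$ is odd, so by \eqref{Jint-hpm} we have $h^-_J = b^+ = 2h^{2,0}_{\bar\partial}$; for a K\"ahler-type surface $b_1$ is even, so $h^-_J = b^+ - 1 = 2h^{2,0}_{\bar\partial}$. The extra hypothesis that distinguishes these cases from the torus/K3 situation is that the canonical bundle is topologically non-trivial (or the surface is non-K\"ahler). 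I would argue that in all these cases $h^{2,0}_{\bar\partial} \le 1$: indeed a nonzero holomorphic $(2,0)$-form is a holomorphic section of the canonical bundle $K_M$, and two linearly independent such sections would force (via their ratio, a nonconstant meromorphic function, or via the fact that their common divisors realize $K_M$ as effective in incompatible ways) the canonical bundle to be topologically trivial, contradicting the hypothesis. Thus $h^{2,0}_{\bar\partial}\in\{0,1\}$, giving $h^-_J \in \{0, 2\}$, and correspondingly $b^+ \in \{h^-_J, h^-_J+1\}$ lands in a range where Corollary~\ref{12} applies.

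Next I would feed this into the comparison results. Since $\tilde J$ is metric related to $J$ and $\tilde J \not\equiv \pm J$, Proposition~\ref{int1} (extended to the metric related case, as noted after Corollary~\ref{atmostone}) gives $\dim(H^-_J \cap H^-_{\tilde J}) \le 1$. Combined with the constraint $h^-_{\tilde J} \le b^+$ from \eqref{easyestimate} and the bounds on $b^+$ from the previous paragraph, Corollary~\ref{12} yields $h^-_{\tilde J} \le 1$. More precisely, when $h^{2,0}_{\bar\partial} = 1$ we have $h^-_J = 2$ and either $b^+ = 2$ (K\"ahler type, so $h^-_J = b^+$, Corollary~\ref{12}(i)) or $b^+ = 3$ (non-K\"ahler type, so $h^-_J = b^+$, again Corollary~\ref{12}(i)); when $h^{2,0}_{\bar\partial} = 0$ we have $h^-_J = 0$ and then by \eqref{easyestimate} $b^+ \le 1$, so in fact $b^+ \in \{0,1\}$ and $h^-_{\tilde J} \le b^+ \le 1$. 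Either way $h^-_{\tilde J} \in \{0,1\}$, which is statement (i)/(ii).

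Then I would establish the \emph{characterization} of when case (ii) occurs. If $h^-_{\tilde J} = 1$, pick a nonzero $\tilde\alpha \in \mathcal{Z}^-_{\tilde J}$, so $H^-_{\tilde J} = \mathrm{Span}([\tilde\alpha])$. The claim is that this $\tilde\alpha$ can be taken to lie in $\mathcal{Z}^-_J$. The cleanest route is via the key structural fact recorded in Remark~\ref{openset}: any $g$-compatible $\tilde J$ with a class in $H^-_J \cap H^-_{\tilde J}$ has fundamental form of the form \eqref{hj-=1}. So I would instead argue directly that a $J$-anti-invariant closed form must be present. Because $[\tilde\alpha] \in H^+_J \oplus H^-_J$ (by the $C^\infty$-pure-and-full decomposition, Theorem~\ref{pf-dim4}), and because for the integrable $J$ the space $H^-_J$ is spanned by the real and imaginary parts of holomorphic $(2,0)$-forms (via \eqref{same}), I would decompose $\tilde\alpha$ and use the metric-related geometry to show its nonzero $H^-_J$-component is itself closed and $\tilde J$-anti-invariant, producing the desired $\alpha \in \mathcal{Z}^-_J$ with $H^-_{\tilde J} = \mathrm{Span}([\alpha])$. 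Conversely, Remark~\ref{openset} already shows that for any nonzero $\alpha \in \mathcal{Z}^-_J$ there are metric related $\tilde J$ realizing $H^-_{\tilde J} = \mathrm{Span}([\alpha])$, so case (ii) genuinely occurs exactly as described.

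Finally, for \emph{openness and density} of case (i), I would combine Corollary~\ref{hJ-=0} (which gives openness of $\{h^-_{\tilde J} = 0\}$ via the semi-continuity of Theorem~\ref{semicont-hJt}) with an explicit perturbation: starting from any $\tilde J \in \mathcal{J}_J$ with $h^-_{\tilde J} = 1$ and generator $[\alpha]$, I would perturb the fundamental form within the metric-related family by adding a small, compactly supported multiple of $J\alpha$ (as in \eqref{hj-=1}, or the construction around \eqref{om(f,h,alpha)}) so that the new $\tilde\omega$ is no longer pointwise orthogonal to $\alpha$, forcing $h^-_{\tilde J'} = 0$; making the support small keeps $\tilde J'$ arbitrarily $C^\infty$-close to $\tilde J$. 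The main obstacle I anticipate is the characterization step, specifically proving that the $H^-_J$-component of a $\tilde J$-anti-invariant closed form is automatically closed and that it generates $H^-_{\tilde J}$ — this is where the interplay between the two anti-invariant decompositions, the harmonicity/self-duality identifications, and the integrability of $J$ must be used carefully, rather than the bookkeeping with Betti numbers, which is routine.
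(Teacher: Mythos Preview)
Your argument has a genuine gap at the very first step: the claim that $h^{2,0}_{\bar\partial}\le 1$ for a K\"ahler-type surface with topologically non-trivial canonical bundle is false. A smooth quintic in $\mathbb{P}^3$, for instance, has $K_M\cong\mathcal{O}(1)$ (certainly topologically non-trivial) and $p_g = h^{2,0}_{\bar\partial}=4$; more generally, surfaces of general type can have arbitrarily large geometric genus. Two independent holomorphic sections of a line bundle give a meromorphic function, but that in no way forces the bundle to be trivial. Consequently your Betti-number bookkeeping collapses: in such examples $h^-_J = b^+-1$ can be large, and Corollary~\ref{12}(ii) only yields $h^-_{\tilde J}\le 2$, not $\le 1$.

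The paper's route is genuinely different and avoids this pitfall. It uses the Gauduchon metric in the common conformal class: by Proposition~\ref{gaud}, any $\psi\in\mathcal{Z}^-_{\tilde J}$ (being harmonic self-dual) decomposes as $\psi = a\omega+\alpha$ with $a$ a \emph{constant} and $\alpha\in\Omega^-_J$. Orthogonality of $\psi$ with $\tilde\omega=f\omega+\beta$ gives $2af+\langle\alpha,\beta\rangle=0$ pointwise; since $K_M$ is topologically non-trivial, the section $\beta\in\Omega^-_J$ must vanish somewhere, where $f\ne 0$, forcing $a=0$. Hence $\psi=\alpha\in\mathcal{Z}^-_J$, i.e.\ $H^-_{\tilde J}\subset H^-_J$, and then Proposition~\ref{int1} gives $h^-_{\tilde J}\le 1$. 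This inclusion is exactly the ``characterization step'' you flagged as the main obstacle, and it is also what drives the density argument (one must know that any new generator of $H^-_{\tilde J'}$ again lies in $\mathcal{Z}^-_J$ before the perturbation can be seen to kill it). Incidentally, in your perturbation you should add a multiple of $\alpha$, not $J\alpha$: the latter preserves orthogonality with $\alpha$.
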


%



\begin{proof} First, we justify the statement
$h^-_{\tilde J} \in \{0,1\}$.  For a
complex surface of non-K\"ahler type,
this follows directly from  Corollary \ref{12}.
%
%
%
%
%
%
Now suppose that $(M,J)$ is a complex surface of K\"ahler
type with topologically non-trivial canonical bundle. Consider  the
conformal class of metrics compatible with both $J$ and $\tilde J$
and let $g$ be the Gauduchon metric with respect to $J$ in this
class. Let $\omega$ and $\tilde \omega$ denote the fundamental forms
of $(g,J)$ and $(g, \tilde J)$, respectively. They are related as in
\eqref{ombeta},
\[ \tilde \omega = f \omega + \beta, \; \mbox{with $\beta \in
\Omega^-_J$, $f \in C^{\infty}(M)$ so that $2f^2 + |\beta|^2 = 2$. }
\]
Suppose $h^-_{\tilde J} \neq 0$ and let $\psi \in \mathcal{Z}^-_{\tilde J}$,
not identically zero. Since $\psi$ is $g$-harmonic, from Proposition
\ref{gaud} it must be of the form $\psi= a \omega + \alpha$, with
$a$ constant and $\alpha \in \Omega^-_J$. The
pointwise condition $<\psi, \tilde \omega> = 0$ is equivalent to
$$ 2a f + <\alpha,\beta> = 0  \mbox{ everywhere on $M$.} $$ But $\beta$ (and $\alpha$)
must vanish somewhere on $M$, since the canonical bundle is
topologically non-trivial. At a point $p$ where $\beta(p) = 0$, we
have $f^2(p) = 1 \neq 0$, thus it follows that $a=0$. Thus $\psi =
\alpha$, but since $d \psi = 0$, it follows that $\psi = \alpha \in
\mathcal{Z}^-_J$. Hence, $H^-_{\tilde J} \subset H^-_J$. The statement
$h^-_{\tilde J} \in \{0,1\}$ follows now from Proposition
\ref{int1}. Note that we also proved the description of the case $h^-_{\tilde J} = 1$.
%

\vspace{0.2cm}

Next, we prove the density statement in Theorem \ref{nontrivK}.
This
follows from Corollary \ref{hJ-=0} and the following observation.

\begin{prop} \label{exph-=0}
Let $(M,J)$ be a compact complex surface as in Theorem \ref{nontrivK}.
If $\tilde J \in \mathcal{J}_J$ and $h^-_{\tilde J} \neq 0$, then there exists
$\tilde J' \in \mathcal{J}_J$, arbitrarily close to $\tilde J$, and with $h^-_{\tilde J'} = 0$.
\end{prop}

\begin{proof} Suppose first that the geometric genus vanishes.
For non-K\"ahler type, this means  $b^+ = 2h^{2,0}_{\bar \partial} = 0$,
so it follows from \eqref{type-sdasd} that $h^-_{\tilde J} =
0$ for any $\tilde J$ on $M$ (even not metric related to $J$). If
$(M,J)$ is of K\"ahler type and has zero geometric genus,
then $h^-_J = 2h^{2,0}_{\bar \partial} = 0$, so the first part of the proof of
Theorem \ref{nontrivK} shows that $h^-_{\tilde J} = 0$, for any $\tilde J \in \mathcal{J}_J$.

Suppose next that the geometric genus of $(M,J)$ does not vanish.
Consider first the case $\tilde J \not\equiv \pm J$.
From the first part of the proof of Theorem \ref{nontrivK}, the assumption
$h^-_{\tilde J} \neq 0$ implies that there exists $\alpha \in
\mathcal{Z}^-_J$ such that $H^-_{\tilde J} = {\rm Span}\{ [\alpha]
\}$. Moreover, there is a metric $g$ on $M$ compatible with both $J$
and ${\tilde J}$ so that the corresponding forms $\omega$ and
$\tilde \omega$ are related on $M' = M \setminus \alpha^{-1}(0)$ as
in (\ref{hj-=1}):
\[ \tilde \omega = f \omega + r J\alpha \;, \]
where $f$ and $r$ are $C^{\infty}$-functions on $M'$, satisfying the
norm condition (\ref{norm2}). Note that even if the above relation
is valid on the (open, dense) set $M'$, $\tilde \omega$ is defined
on the whole $M$. We deform $\tilde \omega$ as follows. Let $\tilde
r$ be a compactly supported function on a small open subset $U$ of
$M'$ and define
\[ \tilde \omega ' = \tilde f \tilde \omega + \tilde r \alpha \;, \]
where the function $\tilde f$ is chosen so that $|\tilde \omega '|^2
= 2$. Let $\tilde J'$ be the almost complex structure induced by
$(g, \tilde \omega ')$. We claim that $h^-_{\tilde J'} = 0$.

Indeed, if $h^-_{\tilde J'} \neq 0$, as in the proof of Theorem
\ref{nontrivK}, there exists $\beta \in \mathcal{Z}^-_J(M)$, so that
$H^-_{\tilde J'} = {\rm Span}\{ [\beta ]\}$. Moreover, there are
functions $h, q$ so that
\[ \tilde \omega' = h \omega + q J\beta , \]
on the open dense set $M''= M \setminus \beta^{-1}(0)$. On the other
hand, on $M'$ we have
\[ \tilde \omega ' = (\tilde f f) \omega + (\tilde f r) J\alpha +
\tilde r \alpha \; .\] It follows that on $M' \cap M''$, we have
\[ q J\beta = (\tilde f r) J\alpha +
\tilde r \alpha \; . \] Since $\tilde r$ is compactly supported on a
small subset in $M'$, it follows that $J\alpha$ and $J\beta$ are
conformal multiples of one another on a non-empty open set. By the
argument in the proof of Proposition \ref{int1}, it follows that
$\alpha$ and $\beta$ are (non-zero) scalar multiples of one another
on the whole $M$. Thus, $H^-_{\tilde J'} = {\rm Span}\{ [\alpha] \}$.
But, by construction, on the set where $\tilde r
\neq 0$, the form $\tilde \omega '$ is not point-wise orthogonal to
$\alpha$. Thus, $h^-_{\tilde J'} = 0$, as claimed.

\vspace{0.2cm}

In the case $\tilde J \equiv \pm J$, the argument is similar. We
have even larger freedom in considering the deformation. Let $\alpha
\in \mathcal{Z}^-_J$, and let $r_1$, $r_2$ be compactly supported on
disjoint open sets. Consider
\[ \tilde \omega ' = f \omega + r_1 \alpha + r_2 J\alpha, \]
where $f$ is chosen to fulfill the norm condition. As above, one can
show that $h^-_{\tilde J'} = 0$.
\end{proof}

\begin{remark} \label{nononzero} {\rm The first part of the argument above shows the following:
suppose $(M,J)$ is a compact complex surface of K\"ahler type
with vanishing geometric genus and topologically non-trivial canonical bundle.
Then for any $\tilde J \in \mathcal{J}_J$, $h^-_{\tilde J} = 0$.}
\end{remark}

\vspace{0.1cm}

Finally, the openness statement in Theorem \ref{nontrivK} follows
from:
\begin{prop} \label{seqJ}
With the notations and assumptions of Theorem \ref{nontrivK},
suppose $\tilde J_k$ is a sequence of almost complex structures
converging to $\tilde J$ (in the $C^{\infty}$-topology), with
$\tilde J_k, \tilde J \in \mathcal{J}_J$. If $h^-_{\tilde J_k} \neq
0$, then $h^-_{\tilde J} \neq 0$.
\end{prop}
\begin{proof} The assumption $h^-_{\tilde J_k} \neq 0$ and the earlier arguments
in the proof of Theorem \ref{nontrivK}, show that there exists
$\alpha_k \in \mathcal{Z}^-_J$, such that $H^-_{\tilde J_k} = {\rm
Span}( [\alpha_k])$. We can normalize $\alpha_k$ so that $[\alpha_k
] \cdot [\alpha_k] = 1$, where $\cdot$ denotes here the cup-product
of cohomology. Thus, as $\alpha_k$ is a sequence on the unit sphere
in $\mathcal{Z}^-_J$ which is a compact set (note that
$\mathcal{Z}^-_J$ is finite dimensional), we can extract a
subsequence, still denoted $\alpha_k$, which converges to $\alpha
\in \mathcal{Z}^-_J$. Obviously, $[\alpha] \neq 0$, as $[\alpha]
\cdot [\alpha] = 1$. Moreover, since $\tilde J_k \rightarrow \tilde
J$, $\alpha_k \rightarrow \alpha$, the relation
$$\alpha_k(\tilde J_k X, \tilde J_k Y) = - \alpha_k(X, Y) $$
implies
$$ \alpha(\tilde J X, \tilde J Y) = - \alpha(X, Y) .$$
Thus, $h^-_{\tilde J} \neq 0$.
\end{proof}

\noindent This also completes the proof of Theorem \ref{nontrivK}.
\end{proof}

\vspace{0.1cm}

\begin{remark} {\rm A similar argument to the one in Proposition \ref{seqJ}
yields the following result: given a metric $g$ on a compact 4-manifold $M$,
the set of $g$-compatible almost complex structures $\tilde J$ with $h^-_{\tilde J} =0$
is open in the set of all $g$-compatible almost complex structures.}
\end{remark}

\begin{remark} \label{junholee} {\rm Under the assumptions of Theorem \ref{nontrivK},
if $\alpha \in \mathcal{Z}^-_J$,
then  the almost complex structures $\tilde J$ defined by
(\ref{om(f,h,alpha)}) have $h^-_{\tilde J} = 1$, for any choice of
$(r, f)$ satisfying (\ref{squarenorm2}). In particular this is true for
Junho Lee's almost complex structures $J^{\pm}_{\alpha}$ defined by
(\ref{JLhf}).
Note that since $J$ is integrable, $\alpha + iJ\alpha$
is a holomorphic $(2,0)$ form on $M$, hence the zero set
$\alpha^{-1}(0)$ is a canonical divisor on $(M, J)$.}
\end{remark}


\begin{remark} {\rm If a compact 4-manifold $M$ admits a pair of
integrable complex structure $(J_1, J_2)$ which are metric related
then $M$ has a bi-Hermitian structure. The study of such structures has been
active recently, (see, for instance,
\cite{Hitchin} and the references therein), especially due to the link with
generalized K\"ahler geometry (\cite{Gualtieri}). An easy
consequence of Theorem \ref{nontrivK} is
the observation that a compact 4-manifold $M$ with $b^+ = 2$, or
$b^+ \geq 4$ does not admit a bi-Hermitian structure (compatible
with the given orientation). This is not new, as it is easily seen
from the classification results of \cite{AGG} and \cite{Ap}, that
manifolds admitting bi-Hermitian structures must have $b^+ \in \{ 0,
1, 3 \}$. }
\end{remark}


\subsubsection{Surfaces of K\"ahler type with  topologically trivial,
 but holomorphically non-trivial canonical bundle}

\begin{prop} \label{hyperelliptic} Suppose that $(M,J)$  is a complex surface of
K\"ahler type with topologically trivial, but holomorphically
non-trivial canonical bundle. Then for any almost complex structure
$\tilde J$ on $M$ (not even metric related to $J$), we have
$h^-_{\tilde J} \in \{0, 1\}$. The set of almost complex structures
with $h^-_{\tilde J} = 0$ is open and dense with respect to the
$C^{\infty}$-topology, both in $\mathcal J$ and $\mathcal J_J$.
\end{prop}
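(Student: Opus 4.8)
The plan is to classify complex surfaces $(M,J)$ of Kähler type with topologically trivial but holomorphically nontrivial canonical bundle, and then to exploit the fact that the geometric genus is necessarily small. Since the canonical bundle $K_M$ is topologically trivial, we have $c_1(M)=0$, so $K_M$ is a flat line bundle; holomorphic nontriviality means $K_M \neq \mathcal{O}_M$ in $\mathrm{Pic}(M)$, i.e. $K_M$ is a nontrivial torsion or flat element. By the Enriques–Kodaira classification of Kähler surfaces with $c_1 = 0$, the possibilities are bielliptic (hyperelliptic) surfaces and Enriques surfaces (the K3 and torus cases are excluded because there $K_M = \mathcal{O}_M$ is holomorphically trivial). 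For both of these, the key numerical input is that $h^{2,0}_{\bar\partial} = p_g = 0$: this is standard since $H^0(M,K_M) = 0$ whenever $K_M$ is a nontrivial flat/torsion bundle with no sections. Hence by \eqref{Jint-hpmcx} we get $h^-_J = 2h^{2,0}_{\bar\partial} = 0$, and since these surfaces are Kähler, $b_1$ is even and by \eqref{Jint-hpm} we have $b^+ = 2h^{2,0}_{\bar\partial}+1 = 1$.

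The statement $h^-_{\tilde J} \in \{0,1\}$ for \emph{any} $\tilde J$ on $M$ (not just metric related ones) then follows immediately from \eqref{easyestimate}: indeed $h^-_{\tilde J} \leq b^+ = 1$. This is the crucial payoff of identifying $b^+ = 1$ — the constraint becomes purely topological and automatic, with no need for the Gauduchon-metric argument of Theorem \ref{nontrivK}. So the upper bound costs essentially nothing once the classification is in hand.

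For the openness and density of $\{h^-_{\tilde J} = 0\}$ in $\mathcal{J}$, I would invoke Theorem \ref{int3} directly: since $M$ is a compact $4$-manifold with $b^+ = 1$ admitting almost complex structures, that theorem already asserts exactly that the almost complex structures with $h^-_{\tilde J} = 0$ form an open and dense subset of $\mathcal{J}$ in the $C^\infty$-topology. For density in the smaller space $\mathcal{J}_J$, one cannot simply restrict the dense set from $\mathcal{J}$, since a dense subset of $\mathcal{J}$ need not meet $\mathcal{J}_J$ densely. Instead I would reprise the deformation construction from the proof of Proposition \ref{exph-=0}: starting from $\tilde J \in \mathcal{J}_J$ with $h^-_{\tilde J} = 1$, find $\alpha \in \mathcal{Z}^-_J$ with $H^-_{\tilde J} = \mathrm{Span}([\alpha])$ and a common compatible metric $g$, then perturb $\tilde \omega$ by a compactly supported term (of the form $\tilde r \alpha$ on a small open set in $M' = M \setminus \alpha^{-1}(0)$, rescaling $f$ to preserve the norm) so that the resulting $\tilde \omega'$ fails to be pointwise orthogonal to $\alpha$ on an open set; this kills $H^-_{\tilde J'}$ while staying in $\mathcal{J}_J$ and arbitrarily close to $\tilde J$. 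Openness in $\mathcal{J}_J$ follows from the sequential argument of Proposition \ref{seqJ}, which only used metric-relatedness.

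\emph{The main obstacle} will be the density statement within $\mathcal{J}_J$: one must confirm that the deformation of Proposition \ref{exph-=0} genuinely applies here even though the present surfaces have vanishing geometric genus. In fact, Remark \ref{nononzero} already handles a parallel situation (Kähler type, vanishing $p_g$, topologically nontrivial $K_M$) by showing $h^-_{\tilde J} = 0$ outright for \emph{all} $\tilde J \in \mathcal{J}_J$ — and the same mechanism should apply here: since $p_g = 0$, the first part of the proof of Theorem \ref{nontrivK} (the Gauduchon decomposition $\psi = a\omega + \alpha$ forcing $H^-_{\tilde J} \subset H^-_J$) combined with $h^-_J = 2h^{2,0}_{\bar\partial} = 0$ shows $h^-_{\tilde J} = 0$ for every $\tilde J \in \mathcal{J}_J$. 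So density in $\mathcal{J}_J$ is in fact trivial (the whole space already consists of structures with $h^-_{\tilde J} = 0$), and the genuinely nontrivial content is confined to density in the larger $\mathcal{J}$, which Theorem \ref{int3} supplies. The delicate point to verify carefully is thus simply that $p_g = 0$ for bielliptic and Enriques surfaces, for which I would cite the standard tables (e.g. \cite{BPV}).
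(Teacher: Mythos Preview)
Your overall strategy coincides with the paper's: identify that such a surface is hyperelliptic, hence $b^+=1$, and then invoke the general result for $b^+=1$ (Theorem~\ref{int3}). Two comments, one minor and one substantial.

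\medskip

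\textbf{Minor.} Enriques surfaces do not belong here. Their canonical bundle is $2$-torsion in $\mathrm{Pic}$, but its first Chern class is the nonzero $2$-torsion element of $H^2(M;\mathbb Z)$, so $K_M$ is \emph{not} topologically trivial; Enriques surfaces are already covered by Theorem~\ref{nontrivK}. This does not damage your argument (you still get $b^+=1$), but the classification should read ``hyperelliptic'' only, as the paper states.

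\medskip

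\textbf{Substantial.} Your treatment of density in $\mathcal J_J$ contains a real gap. Both approaches you propose break down:
\begin{itemize}
\item Your first idea is to find $\alpha\in\mathcal Z^-_J$ with $H^-_{\tilde J}=\mathrm{Span}([\alpha])$ and then run the deformation of Proposition~\ref{exph-=0}. But here $p_g=0$, so $\mathcal Z^-_J=\{0\}$; there is no such $\alpha$.
\item Your second idea is to say that the Gauduchon argument from the first part of Theorem~\ref{nontrivK} (and Remark~\ref{nononzero}) forces $H^-_{\tilde J}\subset H^-_J=\{0\}$, hence $h^-_{\tilde J}=0$ for \emph{every} $\tilde J\in\mathcal J_J$. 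This step is unjustified: that argument deduces $a=0$ in the decomposition $\psi=a\,\omega+\alpha$ by evaluating at a zero of $\beta\in\Omega^-_J$, and the existence of such a zero is exactly the content of ``$K_M$ topologically nontrivial''. In the present case $K_M$ is topologically trivial, so $\beta$ may be nowhere vanishing and the conclusion $a=0$ does not follow. Indeed, the paper explicitly records right after this proposition that whether $h^-_{\tilde J}=0$ for all $\tilde J\in\mathcal J_J$ remains open here.
\end{itemize}

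The fix is simpler than either attempt. The density argument in the proof of Theorem~\ref{int3} perturbs a given $\tilde J$ (with $h^-_{\tilde J}=1$) to a nearby $\tilde J'$ that is compatible with the \emph{same} metric $g$ used to define $\tilde\omega$. If you choose that metric $g$ to be compatible with both $J$ and $\tilde J$ (which exists since $\tilde J\in\mathcal J_J$), the perturbed $\tilde J'$ is still $g$-compatible, hence still in $\mathcal J_J$. Thus the proof of Theorem~\ref{int3} already yields density in $\mathcal J_J$ with no extra work; this is what the paper means by ``the claims follow from \eqref{type-sdasd} and Theorem~\ref{int3}''. Openness in $\mathcal J_J$ is immediate since $\{h^-=0\}$ is open in $\mathcal J$.
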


\begin{proof} Any such  surface is a hyperelliptic
surface. In this case $b^+=1$ and the claims follow from
\eqref{type-sdasd} and Theorem \ref{int3}.
\end{proof}

We wonder whether the result in Remark \ref{nononzero} still holds
in this case; in other words, is it still true that $h^-_{\tilde J}
= 0$ for any $\tilde J \in \mathcal{J}_J$?

\subsubsection{Surfaces with holomorphically trivial canonical bundle}
Even if the non-K\"ahler subcase is covered by
Theorem \ref{nontrivK}, it is worth considering it separately, as
the result takes a very simple form. Surfaces of non-K\"ahler type with holomorphically
trivial canonical bundles are Kodaira surfaces.

Thus, let $(M,J)$ be a Kodaira surface. We have $h_J^-=b^+=2$. Let $\Phi = \beta + i J\beta$ be a a nowhere
vanishing holomorphic $(2,0)-$form trivializing the canonical
bundle. The real and imaginary parts of $\Phi$, $\beta$ and $J\beta$
are both closed, nowhere vanishing $J$-anti-invariant forms. Suppose
that $g$ is a metric compatible with $J$ and let $\omega$ be the
corresponding non-degenerate form of $(g,J)$. The triple $\{\omega,
 \beta, J\beta\}$ is a pointwise orthogonal basis of the
rank $3$ bundle $\Lambda_g^+$. Thus, any almost complex structure
compatible with $g$ corresponds to a form
\begin{equation} \label{omfls}
\omega_{f,l,s} = f\omega+l\beta+sJ\beta,
\end{equation}
where the functions $f,l,s \in
C^{\infty}(M)$ satisfy $2f^2 + |\beta|^2(l^2+s^2) = 2$.
We denote the almost complex structure corresponding to $(g,
\omega_{f,l,s})$ by $J_{f,l,s}$. Every almost complex structure
metric related to $J$ can be obtained this way.
Since for a Kodaira surface $\mathcal H_g^+=H_J^- = {\rm Span}(\alpha,
J\alpha)$, the only possible self-dual harmonic forms are of
type
\[ a\, \beta+b\, J\beta, \quad \hbox{ where $a$ and $b$ are
constants}.\]
The only condition for this form lying in
$H_{J_{f,l,s}}^-$ is $al+bs=0$. Thus we have proved

\begin{prop} \label{kodsurface}
If $(M, g, J)$ is a Kodaira surface with a compatible metric $g$,
using the notations above,
\[h_{J_{f,l,s}}^-= 2 - rank({\rm Span}(l, s)) .\]
Clearly, $h_{J_{f,l,s}}^-= 0$ is the generic case, $
h_{J_{f,l,s}}^-= 2$ if and only if $l=s=0$, i.e. $\tilde J = \pm J$,
and $ h_{J_{f,l,s}}^-= 1$ if and only if the functions $l$ and $s$
are scalar multiples of each other, not both identically zero.
\end{prop}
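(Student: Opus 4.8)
The plan is to realize $H^-_{J_{f,l,s}}$ concretely inside the three-dimensional bundle $\Lambda^+_g$ and thereby reduce the computation of its dimension to a finite-dimensional linear algebra problem. Since $J_{f,l,s}$ is compatible with the fixed metric $g$, every class in $H^-_{J_{f,l,s}}$ has a unique representative which is a closed $J_{f,l,s}$-anti-invariant form, and by the discussion following \eqref{type-sdasd} such a form is $g$-harmonic and self-dual. Thus I would identify $H^-_{J_{f,l,s}}$ with the set of $g$-harmonic self-dual forms $\psi$ satisfying the pointwise orthogonality $\langle \psi, \omega_{f,l,s}\rangle = 0$, using that a self-dual form is $J_{f,l,s}$-anti-invariant precisely when it is pointwise orthogonal to the fundamental form $\omega_{f,l,s}$ (this is exactly \eqref{type-sdasd} applied to $(g, J_{f,l,s}, \omega_{f,l,s})$).

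The first concrete step is to pin down all $g$-harmonic self-dual forms. On a Kodaira surface we already have $\mathcal{H}_g^+ = H^-_J = {\rm Span}(\beta, J\beta)$, so every such form is $a\beta + bJ\beta$ with $a, b \in \mathbb{R}$ constant; here I use that $\beta$ and $J\beta$ are closed and self-dual, hence $g$-harmonic, and that they are pointwise linearly independent, being the real and imaginary parts of the nowhere-vanishing form $\Phi$ (equivalently, $J$ acts as a complex structure on the two-dimensional $\Lambda^-_J$, so $\beta(p)$ and $J\beta(p)$ are independent wherever $\beta(p)\neq 0$). Next I would substitute this into the orthogonality condition. Because $\{\omega, \beta, J\beta\}$ is a pointwise orthogonal frame of $\Lambda^+_g$ with $|J\beta|^2 = |\beta|^2$, a short computation gives
\[\langle a\beta + bJ\beta,\; f\omega + l\beta + sJ\beta\rangle = |\beta|^2\,(al + bs),\]
and since $\beta$ is nowhere vanishing this vanishes identically if and only if the function $al + bs$ is identically zero on $M$.

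The conclusion is then pure linear algebra: the assignment $(a,b) \mapsto [a\beta + bJ\beta]$ is a linear isomorphism from the solution space $\{(a,b) \in \mathbb{R}^2 : al + bs \equiv 0\}$ onto $H^-_{J_{f,l,s}}$, injectivity following from the linear independence of $\beta, J\beta$ together with the injection of harmonic forms into cohomology. The dimension of this solution space equals $2 - \dim_{\mathbb{R}}{\rm Span}\{l, s\}$, the span being taken in $C^\infty(M)$, which is exactly $2 - {\rm rank}({\rm Span}(l,s))$, and the three sub-cases in the statement read off immediately. The only point requiring genuine care — and the crux of the argument — is the reduction in the second step: it is the \emph{constancy} of the coefficients $a, b$ that turns the a priori pointwise (hence infinite-dimensional) orthogonality constraint into the single functional identity $al + bs \equiv 0$. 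This constancy is the special feature of Kodaira surfaces, stemming from the holomorphically trivial canonical bundle, and is what makes the clean formula possible; the remaining steps are routine.
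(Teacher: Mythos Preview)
Your proof is correct and follows essentially the same route as the paper: identify $H^-_{J_{f,l,s}}$ with the $g$-harmonic self-dual forms pointwise orthogonal to $\omega_{f,l,s}$, use $b^+=2$ to conclude that $\mathcal H_g^+={\rm Span}(\beta,J\beta)$ so every such form is $a\beta+bJ\beta$ with $a,b$ constant, and reduce the orthogonality to the single condition $al+bs\equiv 0$. One small expositional point: the constancy of $a,b$ is a consequence of $\dim\mathcal H_g^+=b^+=2$ rather than directly of the triviality of the canonical bundle (the latter gives the nowhere-vanishing global frame $\beta,J\beta$, the former forces constant coefficients); otherwise your argument matches the paper's exactly.
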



Next, suppose that $(M,J)$ is a K\"ahler surface with
holomorphically trivial canonical bundle. Then $b^+=3$ and $(M, J)$
is a K3 surface or $4-$torus. As in the Kodaira surface case, let
$\Phi = \beta + i J\beta$ be a a nowhere vanishing holomorphic
$(2,0)-$form trivializing the canonical bundle. Consider a conformal
class of metrics compatible with $J$, and, in this class, let $g$ be
the Gauduchon metric, with $\omega$ being the associated form. As
above, denote by $J_{f,l,s}$ the almost complex structure
corresponding to form $\omega_{f,l,s}$ given by (\ref{omfls}). Every
almost complex structure metric related to $J$ is of the type
$J_{f,l,s}$ for some Gauduchon metric $g$ and for some functions $f,
l, s$.

The difference from the Kodaira surface case is that $b^+={\rm dim}
(\mathcal H_g^+)=3$, rather than $2$. As argued in Theorem
\ref{nontrivK}, any $g-$harmonic form has a constant inner product
with $\omega$. Let $\omega'$ be the unique $g$-self-dual harmonic
form with $<\omega', \omega>_g = 2$ and which is cup-product
orthogonal to $H^-_J = {\rm span} \{\beta, J\beta \}$. This is
written as
\begin{equation} \label{uv}
\omega' = \omega+ u\, \beta + v\,J\beta \; ,
\end{equation}
where $u, v$ are $C^{\infty}$-functions. They satisfy
\[\int_M u |\beta|^2 \; d\mu_g = \int_M v |\beta|^2 \; d\mu_g  = 0 \; ,\]
and a differential equation corresponding to $d\omega' = 0$. Thus,
any self-dual harmonic form is of type
\[c\; \omega'+ a\; \beta+ b\; J\beta,\] where
$a$,$b$,$c$ are constants. The only condition for this form to be in
$H_{J_{f,l,s}}^-$ is to be point-wise orthogonal to $\omega_{f,l,s}$
(see (\ref{omfls})). This amounts to
\[2cf'+al'+bs'=0, \]
where
\begin{equation}\label{'''} l'=l|\beta|^2, \quad
s'=s|\beta|^2\quad \hbox{ and } f'=2f+ul'+vs' \; .
\end{equation}
Therefore we have the following statement.

\begin{prop} \label{linear} Suppose $(M,J)$ is a K\"ahler surface with
holomorphically trivial canonical bundle. Let $\beta$ be a closed form trivializing
the canonical bundle. Consider a conformal class compatible with $J$
and let $g$ be the Gauduchon metric in this class. Let $\omega$ be
the associated form and let $J_{f,l,s}$ be the $g$-related almost
complex structure defined via (\ref{omfls}). Then
\[h_{J_{f,l,s}}^-=3 - rank({\rm Span } (f',l',s')),\] with $f',
l', s'$ as in \eqref{'''}. The case $h_{J_{f,l,s}}^- = 0$ is the generic situation, thus
the set of almost complex structures $\tilde J$ with $h^-_{\tilde J} = 0$ is dense in
$\mathcal{J}_J$.  The cases $h_{J_{f,l,s}}^- = 2$, $h_{J_{f,l,s}}^- = 1$ also occur.
\end{prop}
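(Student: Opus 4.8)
The formula is essentially forced by the computation preceding the statement, so the plan is first to record it cleanly and then to address the genericity and existence claims. Since $g$ is Gauduchon, Proposition \ref{gaud} together with \eqref{type-sdasd} shows that every $g$-harmonic self-dual $2$-form has the shape $\Psi = c\,\omega' + a\,\beta + b\,J\beta$ with $a,b,c$ constants, so $\mathcal H_g^+$ is spanned by the three harmonic forms $\omega',\beta,J\beta$. Such a $\Psi$ lies in $H^-_{J_{f,l,s}}$ exactly when it is pointwise orthogonal to $\omega_{f,l,s}$, and expanding $\langle \Psi,\omega_{f,l,s}\rangle$ in the pointwise orthogonal frame $\{\omega,\beta,J\beta\}$ collapses to the single scalar equation $2cf' + al' + bs' \equiv 0$ on $M$, with $f',l',s'$ as in \eqref{'''}. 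Thus $H^-_{J_{f,l,s}}$ is the kernel of the linear map $(a,b,c)\mapsto al'+bs'+2cf'$ from $\mathbb R^3$ to $C^\infty(M)$, whose image is the span of the functions $l',s',f'$. Rank--nullity then gives $h^-_{J_{f,l,s}} = 3 - \mathrm{rank}(\mathrm{Span}(f',l',s'))$.

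For the genericity of $h^-_{J_{f,l,s}} = 0$, the plan is to show that maximal rank $3$ is the generic value. Choosing $l,s$ not pointwise proportional makes $l'=l|\beta|^2$ and $s'=s|\beta|^2$ linearly independent, since $|\beta|^2>0$ everywhere (the canonical bundle is holomorphically trivial, so $\beta$ is nowhere zero). Taking moreover $l,s$ small, the norm condition forces $f$ close to the constant $\pm1$, so $f'$ is close to a nonzero constant and hence independent from the generically non-constant $l',s'$; this yields rank $3$. To upgrade to density in $\mathcal J_J$ I would argue as in the proof of Proposition \ref{exph-=0}: any $\tilde J\in\mathcal J_J$ equals $J_{f,l,s}$ for some Gauduchon $g$ and functions $f,l,s$, and a small compactly supported perturbation of $(l,s)$, with $f$ readjusted by the norm condition, produces $\tilde J'$ that is again $g$-related to $J$ (hence in $\mathcal J_J$), is $C^\infty$-close to $\tilde J$, and has $f',l',s'$ independent, so $h^-_{\tilde J'}=0$.

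For the intermediate values I would exhibit explicit choices. Taking $s\equiv 0$ and $l\not\equiv 0$ generic forces $s'=0$ while keeping $l'$ and $f'=2f+ul'$ linearly independent (the nonlinear dependence of $f$ on $l$ makes proportionality non-generic), so $\mathrm{rank}=2$ and $h^-_{J_{f,l,s}}=1$. To realize $h^-_{J_{f,l,s}}=2$ with $\tilde J\not\equiv\pm J$, I would again set $s\equiv 0$ and impose that $f'$ be a constant multiple of $l'$: writing $2f=(\mu-u)\,l\,|\beta|^2$ for a constant $\mu$ and substituting into $2f^2+|\beta|^2 l^2 = 2$ determines $l$ (up to sign) pointwise as a nowhere-vanishing smooth function, whence $f'=\mu l'$, so $\{f',l',s'\}$ has rank $1$ and $h^-=2$. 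The endpoint $l=s=0$ recovers $\tilde J=\pm J$, where $f'=\pm2$, $l'=s'=0$ give rank $1$ and $h^-=2$, as it must.

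The step I expect to be the main obstacle is the density claim: one must verify that the nonlinear coupling of $f$ to $(l,s)$ through $2f^2+|\beta|^2(l^2+s^2)=2$ cannot force $f',l',s'$ to stay dependent under all small admissible perturbations, and that the perturbed structures genuinely remain inside $\mathcal J_J$. A secondary subtlety, relevant for the later Theorem \ref{K3T4}, is that obtaining $h^-=2$ with $\tilde J\not\equiv\pm J$ \emph{and} $\tilde J$ non-integrable requires a non-flat Gauduchon metric (with $u,v$ or $|\beta|^2$ non-constant): for the flat hyperk\"ahler metric the proportionality condition forces $f,l,s$ to be constants, so that $J_{f,l,s}$ is merely another integrable complex structure.
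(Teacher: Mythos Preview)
Your argument is correct and follows essentially the same route as the paper: the formula is derived exactly as in the discussion preceding the proposition (harmonic self-dual forms are $c\,\omega'+a\,\beta+b\,J\beta$, orthogonality to $\omega_{f,l,s}$ gives the linear relation $2cf'+al'+bs'=0$, rank--nullity yields the dimension), and your density argument and explicit $h^-=1,2$ constructions match the paper's strategy, with your $s\equiv 0$, $f'=\mu l'$ family being the $k_2=0$ slice of the two-parameter family the paper records in Remark~\ref{a}. Your final caveat about the flat hyperk\"ahler case is well taken but lies outside the proposition proper; the paper handles the non-integrability claim separately (and differently, via the scarcity of bi-Hermitian conformal classes) in that remark.
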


\noindent Note that $g$ is a hyperK\"ahler metric precisely when
 $|\beta|^2=2$ pointwise and in this case $\omega' = \omega$.

\begin{remark} \label{a}
{\rm We leave to the interested reader to check the computation that
$h_{J_{f,l,s}}^- = 2$ if
and only if
\[f = \pm (1 - k_1 u - k_2 v) |\beta| w,  \; \; l = \pm 2k_1 |\beta|^{-1} w, \; \; s = \pm 2k_2 |\beta|^{-1} w, \]
where $k_1, k_2$ are arbitrary constants, $u, v$ are given by
(\ref{uv}), and
\[w = [(1 - k_1 u - k_2 v)^2 |\beta|^2 + 2(k_1^2 +
k_2^2)]^{-1/2} \; . \]
We just observe that most of the examples with $h_{J_{f,l,s}}^- = 2$ described above
are non-integrable almost complex structures. This can be again
checked directly, or one can argue as follows. If for a certain
metric $g$ and functions $f,l,s$, we obtain an {\it integrable}
almost complex structure $J_{f,l,s}$, then $(g, J, J_{f,l,s})$ is a
bi-Hermitian structure. It is well known that conformal classes
carrying bi-Hermitian structures are very particular, as Theorem 2
in \cite{AGG} shows. On the other hand, our Proposition \ref{linear}
shows that examples of almost complex structures $J_{f,l,s}$ with
$h^-_{J_{f,l,s}} = 2$ occur in {\it each} conformal class associated
to the given $J$. Thus, most of these $J_{f,l,s}$ must be
non-integrable. }
\end{remark}

\noindent As an extension of Conjecture \ref{conj2}, it is natural
to ask:
\begin{question}\label{geq2}
Are there (compact, 4-dimensional) examples of non-integrable almost
complex structures $J$ with $h^-_J \geq 2$ other than the ones
arising from Proposition \ref{linear}? In particular, are there any
examples with $h^-_J \geq 3$?
\end{question}

\vspace{0.3cm}

\noindent Theorem \ref{hJ-} follows from
Theorem \ref{nontrivK} and Propositions \ref{hyperelliptic}, \ref{kodsurface},
\ref{linear}. $\Box$

\vspace{0.2cm}

\subsection{Applications of Theorem \ref{hJ-}.}

We end this section with a couple of applications of our main
result. First, we prove that the $C^{\infty}$-pure property no
longer holds even for a K\"ahler $J$, if one
gives up the compactness of the manifold.

\begin{theorem} \label{nonpure}
Let $(M,J)$ be a compact complex surface with non-trivial canonical
bundle and non-zero geometric genus (equivalently, $h^-_J \neq 0$).
Let $B$ be a small contractible open set in $M$.
Then the $C^{\infty}$-full property for $J$ still holds on $M \setminus B$,
but the $C^{\infty}$-pure property for $J$ on $M
\setminus B$ no longer holds.
\end{theorem}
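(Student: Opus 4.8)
The plan is to treat the two assertions separately, the engine in both cases being that deleting a ball does not affect $H^2$. First I would record the topological fact that the restriction map $\iota^*:H^2(M;\mathbb R)\to H^2(M\setminus B;\mathbb R)$ is an isomorphism: since $B$ is a ball, excision gives $H^k(M,M\setminus B)\cong H^k(\mathbb R^4,\mathbb R^4\setminus\{0\})$, which vanishes for $k\le 3$, so $\iota^*$ is an isomorphism in degree $2$. Fullness on $M\setminus B$ is then immediate from Theorem \ref{pf-dim4}: given $\mathfrak a\in H^2(M\setminus B;\mathbb R)$, write $(\iota^*)^{-1}\mathfrak a=\mathfrak a^++\mathfrak a^-$ on $M$, represent the summands by closed forms $\sigma^+\in\mathcal Z^+_J$ and $\sigma^-\in\mathcal Z^-_J$, and restrict. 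The pointwise $J$-type is preserved under restriction, so $\mathfrak a=[\sigma^+|_{M\setminus B}]+[\sigma^-|_{M\setminus B}]$ exhibits $\mathfrak a\in H^+_J+H^-_J$ on $M\setminus B$, and the $C^\infty$-full property persists.

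For the failure of purity the goal is to produce a single nonzero class lying in both $H^+_J$ and $H^-_J$ on $M\setminus B$. Since $h^-_J\neq 0$, by \eqref{same} I may choose $\alpha\in\mathcal Z^-_J$ with $[\alpha]\neq 0$; writing $\alpha=\Phi+\bar\Phi$ with $\Phi=\alpha^{2,0}$, the closedness $d\alpha=0$ forces $\bar\partial\Phi=0$, so $\Phi$ is a nonzero holomorphic $2$-form with $[\Phi]\neq 0$ in $H^2(M;\mathbb C)$. I claim $[\alpha]$ is \emph{also} representable by a closed $J$-invariant form on $M\setminus B$. Indeed, suppose one can solve $\partial u=\Phi$ on $M\setminus B$ for some $(1,0)$-form $u$. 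Put $\eta=u+\bar u$, a real $1$-form. Sorting $d\eta$ by type and using $\partial u=\Phi$, $\bar\partial\bar u=\bar\Phi$, one gets $d\eta=\Phi+\bar\Phi+(\bar\partial u+\partial\bar u)=\alpha+\tau$, where $\tau=\bar\partial u+\partial\bar u$ is a real $(1,1)$-form. Hence $\sigma:=-\tau=\alpha-d\eta$ is a closed real $(1,1)$-form, i.e. a closed $J$-invariant form, with $[\sigma]=[\alpha]$ on $M\setminus B$. Because $\iota^*[\alpha]\neq 0$, this yields a nonzero class in $H^+_J\cap H^-_J$ on $M\setminus B$, so $J$ fails to be $C^\infty$-pure there.

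The heart of the matter, and the step I expect to be the main obstacle, is the solvability of $\partial u=\Phi$ on $M\setminus B$. The obstruction lies in the $\partial$-cohomology $H^{2,0}_\partial(M\setminus B)$, which by conjugating the Dolbeault isomorphism equals $\overline{H^2(M\setminus B,\mathcal O)}$. On the compact surface $M$ this group is nonzero (Serre duality identifies it with the space of holomorphic $2$-forms), which is precisely why $J$ is pure on $M$; all the gain must come from deleting $B$. I would therefore establish the Hartogs-type vanishing $H^2(M\setminus B,\mathcal O)=0$ from the local-cohomology long exact sequence of the pair $(M,M\setminus B)$ together with Serre duality on $M$: for a ball the local cohomology $H^i_{\bar B}(\mathcal O)$ is concentrated in top degree, so the sequence degenerates to the surjectivity of the extension map $H^2_{\bar B}(\mathcal O)\to H^2(M,\mathcal O)$, and this surjectivity is dual to the injectivity of the restriction $H^0(M,K)\to H^0(B',K)$ of global holomorphic $2$-forms to a neighborhood $B'\supset\bar B$, which is nothing but the identity theorem on the connected surface $M$. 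Assembling these gives the primitive $u$ and completes the argument. I would finally remark that the hypotheses on the canonical bundle and the geometric genus enter only through the existence of a closed $J$-anti-invariant form $\alpha$ with $[\alpha]\neq 0$; the location of the ball $B$ is immaterial.
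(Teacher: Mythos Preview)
Your fullness argument is essentially the paper's. For non-purity, however, you take a genuinely different route. The paper deforms $J$ to a new almost complex structure $\tilde J$ by perturbing the fundamental form inside $B$ (via $\tilde\omega = f\omega + r\alpha$ with $r$ supported in $B$); it then invokes Theorem~\ref{nontrivK} to identify $H^-_{\tilde J}(M) = \mathbb{R}[J\alpha]$, whence $[\alpha]\in H^+_{\tilde J}(M)$ by Theorem~\ref{pf-dim4}, yielding a global $\tilde J$-invariant representative $\alpha + d\rho$ which is automatically $J$-invariant on $M\setminus B$ since $\tilde J = J$ there. Your argument is purely complex-analytic: you manufacture the $(1,1)$ representative by solving $\partial u = \Phi$ on $M\setminus B$ directly. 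The solvability hinges on the vanishing $H^2(M\setminus B,\mathcal O)=0$; your local-cohomology/duality sketch is plausible but somewhat roundabout, and the vanishing follows more cleanly from the Siu--Malgrange theorem that $H^n(X,\mathcal F)=0$ for any coherent $\mathcal F$ on a non-compact connected $n$-dimensional complex manifold. Your approach has the virtue of using only the hypothesis $h^-_J\neq 0$ (non-zero geometric genus) and not the non-triviality of the canonical bundle, which the paper needs in order to appeal to Theorem~\ref{nontrivK}; the paper's argument, on the other hand, stays within its own toolkit and serves as an application of Theorem~\ref{hJ-}.
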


\begin{proof}  Since now is
not obvious to which set the groups
$H^{\pm}_J$ refer to, we'll use here the notations $H^{\pm}_J(M)$,
$H^{\pm}_J(M\setminus B)$, etc. By Mayer-Vietoris, the inclusion $i : M \setminus B
\hookrightarrow M$ induces an isomorphism in cohomology
$$H^2(M; \mathbb{R}) \stackrel{i^{*}}{\rightarrow} H^2(M\setminus B;
\mathbb{R}) \;. $$ Via this isomorphism, the subgroups $H^{\pm}_J(M)$
inject in $H^{\pm}_J(M \setminus B)$,  respectively. Thus, $(M
\setminus B, J)$ still has the $C^{\infty}$-full property.

\vspace{0.1cm}

For the $C^{\infty}$-purity statement, let $\alpha \in \mathcal{Z}^-_J(M)$, $\alpha \not\equiv 0$.
Choose a $J$-compatible metric $g$ and
a smooth function $r \geq 0$ compactly supported on $B$, so
that $r^2 |\alpha|_g^2 < 2$. Let $f = (1 - \frac{1}{2} r^2
|\alpha|_g^2)^{1/2}$ and let $\tilde J$ be the almost complex
structure defined by $g$ and $\tilde \omega = f \omega + r \alpha$ as in
(\ref{om(f,h,alpha)}). From Theorem \ref{nontrivK}, we have
 $H^-_{\tilde J}(M) = Span\{ [J\alpha]\}$.

Consider now the cohomology class $[\alpha]$. By Theorem
\ref{pf-dim4}, $[\alpha] \in H^+_{\tilde J}(M)$. Thus, there exists
a 1-form $\rho$ on $M$ such that $\alpha + d\rho$ is ${\tilde
J}$-invariant. On the other hand, by construction, it is clear that
$\tilde J = J$ on $M \setminus B$. Thus, on $M \setminus B$, $\alpha
+ d\rho$ is $J$-invariant, while $\alpha$ is obviously
$J$-anti-invariant. Hence $i^*[\alpha] \in H^+_J(M\setminus B) \cap
H^-_J(M\setminus B)$.

But the argument works for any $\alpha \in \mathcal{Z}^-_J(M)$. Thus, we get
$$ i^*(H^-_J(M)) \subset H^+_J(M\setminus B) \; \; \mbox{ and } \; i^*(H^+_J(M)) \subset H^+_J(M\setminus B) \; , \mbox{ so }$$
$$i^*(H^2(M;\mathbb{R})) = H^2(M \setminus B ; \mathbb{R}) = H^+_J(M\setminus B) .$$
Therefore, we obtain
$$ H^+_J(M \setminus B) \cap H^-_J(M \setminus B) = H^-_J(M \setminus B) \; ,$$
and the right hand-side is non-empty, as it contains at least $i^*(H^-_J(M))$.
\end{proof}

\vspace{0.1cm}

Next, we show that our examples of non-integrable almost complex
structures with $h_{\tilde J}^-=2$ from Proposition \ref{linear}
cannot admit a smooth pseudoholomorphic blowup.
\begin{theorem}\label{K3T4}
Suppose $\tilde J$ is a non-integrable almost complex structure with
$h_{\tilde J}^-=2$ on a $K3$ surface (or on $T^4$) and assume also
that ${\tilde J}$ is metric related to a complex structure. Then
there is no \emph{smooth} almost complex structure ${\tilde J}'$ on
$K3\#\overline{\mathbb{CP}^2}$ (or on
$T^4\#\overline{\mathbb{CP}^2}$) so that the blowup map
$f:K3\#\overline{\mathbb{CP}^2} \rightarrow K3$ (or
$f:T^4\#\overline{\mathbb{CP}^2}\rightarrow T^4$) is a $({\tilde
J}',\tilde J)$ holomorphic map. In other words, there is no
pseudoholomorphic blowup for such a $\tilde J$.
\end{theorem}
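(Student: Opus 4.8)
The plan is to argue by contradiction. Suppose a smooth almost complex structure $\tilde J'$ on $K3 \# \overline{\mathbb{CP}^2}$ (resp. $T^4 \# \overline{\mathbb{CP}^2}$) exists so that the blowup map $f$ is $(\tilde J', \tilde J)$-holomorphic. The guiding principle is that a pseudoholomorphic blowup should pull back the $J$-anti-invariant structure in a controlled way, and then compute $h^-_{\tilde J'}$ on the blown-up manifold to reach a contradiction with the constraints available there. First I would set up the cohomological bookkeeping: the blowup introduces one new class, the Poincar\'e dual $e$ of the exceptional sphere $E$, with $e\cdot e = -1$, so that $b^+$ is unchanged while $b^-$ increases by one. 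In particular $b^+ = 3$ is preserved in both cases. By the easy estimate \eqref{easyestimate}, $h^-_{\tilde J'} \leq b^+ = 3$.

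\textbf{Pulling back anti-invariant forms.} The key step is to show that the two linearly independent closed $\tilde J$-anti-invariant forms on $K3$ (resp. $T^4$) pull back, after correction, to closed $\tilde J'$-anti-invariant forms on the blowup, forcing $h^-_{\tilde J'} \geq 2$. Since $f$ is holomorphic and $\tilde J'$-holomorphic away from $E$, and since $f$ is a diffeomorphism off the exceptional locus, for a closed $\tilde J$-anti-invariant form $\alpha$ with $[\alpha] \in H^-_{\tilde J}$, the pullback $f^* \alpha$ is closed and is $\tilde J'$-anti-invariant on the complement of $E$. The subtle point is behavior along $E$; I would argue that because $f$ contracts $E$ and $\alpha$ lives on the base, $f^*\alpha$ extends as a smooth closed form whose cohomology class lies in the image of $f^*$, and whose anti-invariant character is preserved by continuity (using that holomorphicity of $f$ identifies the type decomposition off $E$, and that $E$ has measure zero). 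This yields $h^-_{\tilde J'} \geq 2$.

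\textbf{Deriving the contradiction via Theorem \ref{hJ-} and integrability.} Now I would invoke the structural results. On the blowup, if $h^-_{\tilde J'} = 2$, one is in a borderline case: Theorem \ref{hJ-} (via Propositions \ref{kodsurface}, \ref{linear}) says that on a compact complex surface metric related to $J$, the value $h^-_{\tilde J} = 2$ occurs \emph{only} for complex tori and $K3$ surfaces. But $K3 \# \overline{\mathbb{CP}^2}$ and $T^4 \# \overline{\mathbb{CP}^2}$ are \emph{not} of this diffeomorphism type --- they are not minimal, so they cannot carry a K\"ahler structure with holomorphically trivial canonical bundle. The cleanest route is to note that if $\tilde J'$ were integrable then $h^-_{\tilde J'} = 2h^{2,0}_{\bar\partial}$ by \eqref{Jint-hpmcx}, forcing $h^{2,0}_{\bar\partial} = 1$; by Kodaira's classification such a surface with $b^+ = 3$ is $K3$ or $T^4$, contradicting non-minimality. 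If instead $\tilde J'$ is non-integrable, I would show that $\tilde J'$ remains metric related to an integrable structure (inherited from the metric realizing $\tilde J$ as metric related to $J$, pulled up across the blowup), placing us within the hypotheses of Theorem \ref{hJ-} and again forcing the base to be $K3$ or $T^4$.

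\textbf{Main obstacle.} The hard part will be the analytic control of the pullback $f^*\alpha$ across the exceptional divisor $E$: verifying both that it extends smoothly and that its $\tilde J'$-anti-invariant character (not merely closedness) survives in cohomology, given that $\tilde J'$ is only assumed smooth and a priori non-integrable near $E$. A secondary obstacle is justifying that the metric-related structure descends appropriately so that Theorem \ref{hJ-} genuinely applies to $\tilde J'$ rather than only to the base; one must confirm that a common compatible metric can be arranged on the blowup compatibly with the blown-down one. Once these two points are secured, the contradiction with the non-minimality of $K3 \# \overline{\mathbb{CP}^2}$ (resp. $T^4 \# \overline{\mathbb{CP}^2}$) is immediate.
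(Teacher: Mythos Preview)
Your strategy coincides with the paper's: assume such a $\tilde J'$ exists, argue that it is (1) non-integrable, (2) has $h^-_{\tilde J'}\ge 2$, and (3) is metric related to a complex structure on the blowup, then invoke Theorem~\ref{hJ-} to obtain a contradiction, since $K3\#\overline{\mathbb{CP}^2}$ (resp.\ $T^4\#\overline{\mathbb{CP}^2}$) is neither a torus nor a K3. The paper's own proof is literally these three lines, asserting (1)--(3) without further comment.

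You have, however, inverted the relative difficulty of your two obstacles. What you call the ``main obstacle'' is not an obstacle at all. Since $f$ is smooth, $f^*\alpha$ is smooth; and since $df\circ\tilde J'=\tilde J\circ df$, one has for every $X,Y\in T_qM'$
\[
(f^*\alpha)(\tilde J' X,\tilde J' Y)=\alpha(\tilde J\,dfX,\tilde J\,dfY)=-\alpha(dfX,dfY)=-(f^*\alpha)(X,Y),
\]
valid pointwise everywhere, including on $E$. No extension or continuity argument is needed: $f^*$ sends $\mathcal Z^-_{\tilde J}$ into $\mathcal Z^-_{\tilde J'}$, and since $f^*$ is injective on $H^2$ this yields $h^-_{\tilde J'}\ge 2$ immediately. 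Likewise (1) is easy: off $E$ the map $f$ is a diffeomorphism, so integrability of $\tilde J'$ would force integrability of $\tilde J$ on $K3\setminus\{p\}$, hence everywhere by continuity of the Nijenhuis tensor, contradicting the hypothesis.

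The genuine content is your ``secondary obstacle'': why should $\tilde J'$ be metric related to a complex structure on the blowup? The paper asserts this and offers no justification. One cannot simply pull back the common metric $g$ or the complex structure $J$ through $f$, since $f$ degenerates along $E$; nor is it automatic that the \emph{given} smooth blowdown $f$ agrees (over $K3$) with the holomorphic blowup of $(K3,J)$ at $p$. Without (3), Theorem~\ref{hJ-} does not apply and the argument does not close: merely knowing that $\tilde J'$ is non-integrable with $h^-_{\tilde J'}\ge 2$ on a manifold with $b^+=3$ is not in itself a contradiction. So this is where any actual work would have to go, both in your write-up and in the paper's.
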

\begin{proof}
If there is such a ${\tilde J}'$, it should satisfy:
\begin{enumerate}
\item ${\tilde J}'$ is not integrable;
\item  $h_{\tilde J'}^-=2$;
\item $\tilde J'$ is metric related to a complex structure.
\end{enumerate}

\noindent However, by our Theorem \ref{hJ-}, there are no such almost complex
structures on $K3\#\overline{\mathbb{CP}^2}$ (or
$T^4\#\overline{\mathbb{CP}^2}$).
\end{proof}

\noindent The above proposition should be compared with Usher's result
\cite{Usher}: there is always such a Lipschitz continuous almost
complex structure $J'$.
The same argument but with some modification
of our previous definition can ensure that there is no such $C^1$
almost complex structure.

\section{Well-balanced almost Hermitian 4-manifolds}

In this section we introduce a class of 4-dimensional almost Hermitian structures
that contains the Hermitian ones and the almost K\"ahler ones.

\vspace{0.2cm}

\subsection{The image of the Nijenhuis tensor}

Given an almost complex structure $J$, at each point $p\in M$
define the image of its Nijenhuis tensor $N_J$
by
\[ Im(N_J)_p = {\rm Span } \{ N_J(X,Y) \; | X, Y \in T_p M \}. \]
This is $J$-invariant, that is if $Z \in Im(N_J)_p$, then $JZ \in Im(N_J)_p$.
The specific of dimension 4 is that at each point $Im(N_J)_p$ is either 0, or 2-dimensional,
but never 4-dimensional. This is so, because $N_J$ can be seen as a map
\[ N_J : T^{2,0}_J \rightarrow T^{0,1}_J, \; \; N_J(Z_1 \wedge Z_2) =
N_J(Z_1, Z_2) = [Z_1, Z_2]^{0,1}, \; Z_1,Z_2 \in T^{1,0}_J, \]
and in dimension 4 the bundle $T^{2,0}_J$  is real 2-dimensional. Here the superscripts denote
the usual complex type of vectors and forms induced by $J$.

One can ask when is $Im(N_J)$ a distribution over $M$.
This certainly happens when $J$ is integrable, as by Nirenberg-Newlander theorem
this holds if and only if $N_J = 0$ everywhere. To ask that $Im(N_J)$ is everywhere
2-dimensional on $M$ is equivalent to say that $N_J$ is non-vanishing at each point.
As the Nijenhuis tensor can be seen as a section of the bundle
$\Lambda^{2,0}_J \otimes T^{0,1}_J$, John Armstrong observed (\cite{Arm}, Lemma 3)
that the non-vanishing of $N_J$ at each point has topological consequences.
\begin{prop} (\cite{Arm}) \label{nonzeroN}
If $(M, J)$ is a 4-dimensional compact almost complex manifold with $N_J$
non-vanishing at each point, then the signature and Euler characteristic of $M$ satisfy
\[ 5\chi(M) + 6 \sigma(M) = 0 \; .\]
\end{prop}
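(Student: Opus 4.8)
The plan is to convert the hypothesis that $N_J$ is nowhere zero into the vanishing of a characteristic number, and then read off $5\chi + 6\sigma = 0$ from the standard Chern-number relations on an almost complex $4$-manifold. The starting point is the identification recalled just before the statement: since $N_J$ is an antisymmetric bundle map $T^{2,0}_J \to T^{0,1}_J$ and $T^{2,0}_J = \Lambda^2 T^{1,0}_J$ is a complex line bundle, $N_J$ is precisely a section of $\mathrm{Hom}(T^{2,0}_J, T^{0,1}_J) = \Lambda^{2,0}_J \otimes T^{0,1}_J$, a complex rank-$2$ (real oriented rank-$4$) bundle, which I will call $\mathcal{E}$. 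The assumption that $N_J$ vanishes nowhere on the compact $M$ says exactly that $\mathcal{E}$ admits a nowhere-zero section. First I would invoke the classical fact that a complex rank-$2$ bundle with a nowhere-vanishing section splits off a trivial line summand (fix a Hermitian metric and take the orthogonal complement of the section), so by the Whitney formula its top Chern class — equivalently, the Euler class of the underlying oriented real rank-$4$ bundle — vanishes: $c_2(\mathcal{E})[M] = 0$.

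The second step is a Chern-class bookkeeping for $\mathcal{E}$ in terms of $c_1 = c_1(T^{1,0}_J)$ and $c_2 = c_2(T^{1,0}_J)$. Writing $x_1, x_2$ for the Chern roots of $T^{1,0}_J$, the conjugate bundle $T^{0,1}_J = \overline{T^{1,0}_J}$ has roots $-x_1, -x_2$, while the line bundle $\Lambda^{2,0}_J = \Lambda^2(T^{1,0}_J)^*$ has Chern class $-(x_1 + x_2) = -c_1$. Hence the Chern roots of $\mathcal{E}$ are $-c_1 - x_1$ and $-c_1 - x_2$, and multiplying them gives
\[
c_2(\mathcal{E}) = (-c_1 - x_1)(-c_1 - x_2) = c_1^2 + c_1(x_1 + x_2) + x_1 x_2 = 2c_1^2 + c_2 .
\]
Combining with the first step yields the single scalar identity $(2c_1^2 + c_2)[M] = 0$.

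Finally I would substitute the two standard relations valid on any closed almost complex $4$-manifold (with the orientation induced by $J$): $c_2[M] = \chi(M)$, the top Chern number being the Euler number, and $c_1^2[M] = 2\chi(M) + 3\sigma(M)$, which follows from $p_1 = c_1^2 - 2c_2$ together with the Hirzebruch signature formula $p_1[M] = 3\sigma(M)$. Plugging these in gives $2(2\chi + 3\sigma) + \chi = 5\chi + 6\sigma = 0$, as claimed.

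I do not anticipate a genuine obstacle: the only delicate point is the characteristic-class bookkeeping — correctly accounting for both the dual and the complex conjugate in assembling $\mathcal{E}$, and remembering that the Euler class of a complex rank-$2$ bundle is $c_2$ rather than $c_1^2$. One should also pin down the sign conventions so that the orientation induced by $J$ is the one used in $c_2[M] = \chi$ and in the signature formula; once these are fixed, the identity is forced.
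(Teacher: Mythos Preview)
Your proof is correct. The paper does not actually supply its own argument for this proposition---it merely quotes the result from Armstrong \cite{Arm}---so there is no in-paper proof to compare against; the approach you outline (read $N_J$ as a nowhere-zero section of the complex rank-$2$ bundle $\Lambda^{2,0}_J\otimes T^{0,1}_J$, deduce $c_2(\mathcal E)=0$, compute $c_2(\mathcal E)=2c_1^2+c_2$, and translate via $c_2=\chi$, $c_1^2=2\chi+3\sigma$) is exactly Armstrong's original one.
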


\subsection{The well-balanced condition}

The following is a classical result (see, for instance, \cite{KN})
\begin{prop}
Let $(M, g, J, \omega)$ be an almost Hermitian manifold. Then
\begin{equation} \label{nablaom-N}
(\nabla_X \omega)(\cdot,\cdot) = 2<N_J(\cdot,\cdot), JX> +
\frac{1}{2} \Big( d\omega(X, \cdot, \cdot) - d\omega(X, J\cdot,
J\cdot) \Big)
\end{equation}
\end{prop}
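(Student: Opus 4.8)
The plan is to convert the stated pointwise identity into a purely algebraic relation among covariant derivatives and then verify it. Since $\omega(\cdot,\cdot) = g(J\cdot,\cdot)$ and $\nabla g = 0$, one has $(\nabla_X \omega)(Y,Z) = <(\nabla_X J)Y, Z> =: A(X,Y,Z)$. First I would record the two structural symmetries of $A$: it is skew in its last two arguments (because $\omega$ is a $2$-form and $\nabla$ preserves forms), and it satisfies $A(X, JY, JZ) = -A(X,Y,Z)$, which follows from $J^2 = -\mathrm{id}$ together with $\nabla g = 0$ (these force $(\nabla_X J)$ to anti-commute with $J$). With these in hand, the whole statement becomes a linear identity in the single tensor $A$, because both $d\omega$ and $N_J$ admit torsion-free expressions in terms of $\nabla J$.

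Concretely, since $\nabla$ is torsion-free, the exterior derivative is the cyclic sum $d\omega(X,Y,Z) = A(X,Y,Z) + A(Y,Z,X) + A(Z,X,Y)$, and a standard computation replacing each Lie bracket in $N_J$ by $\nabla_U V - \nabla_V U$ rewrites $N_J(Y,Z)$ entirely in terms of $(\nabla J)$, hence $<N_J(Y,Z), JX>$ entirely in terms of $A$. The key step is to assemble the combination $d\omega(X,Y,Z) - d\omega(X, JY, JZ)$: the two terms carrying $X$ in the first slot collapse, via $A(X, JY, JZ) = -A(X,Y,Z)$, to $2A(X,Y,Z)$, leaving a remainder built from the permutations in which $Y$ or $Z$ occupies the first slot. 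I would then show that this remainder, after halving, cancels exactly against $2<N_J(Y,Z), JX>$, which is the content of the identity.

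The main obstacle is precisely this last cancellation: it is pure bookkeeping, but a delicate one, since the leftover terms carry the complex structure in their first and last slots (for instance $A(JY, JZ, X)$), where neither of the two symmetries applies directly. The device that makes it manageable is to fix a point $p$ and work in a $J$-adapted orthonormal frame that is $\nabla$-parallel at $p$ (equivalently, in geodesic normal coordinates), so that all covariant derivatives at $p$ reduce to ordinary ones and the identity becomes a finite linear-algebra statement about the tensor $A(p)$. Repeated application of the skew-symmetry in the last two slots, used to move the $J$'s into the last two positions, followed by $A(\cdot, J\cdot, J\cdot) = -A(\cdot,\cdot,\cdot)$, folds every permuted term onto the Nijenhuis contraction and confirms the stated constants. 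There is no analytic content here: once the two symmetries and the torsion-free expressions for $d\omega$ and $N_J$ are in place, the result is forced.
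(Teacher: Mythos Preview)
Your approach is correct and is the standard one: the paper itself does not supply a proof but simply records this as a classical identity and cites Kobayashi--Nomizu, so there is no in-paper argument to compare against. One small comment: the ``$J$-adapted, $\nabla$-parallel frame at $p$'' device you invoke is unnecessary (and a $J$-adapted frame that stays $\nabla$-parallel would require $\nabla J=0$), since once $d\omega$ and $N_J$ are rewritten through $A(X,Y,Z)=g((\nabla_X J)Y,Z)$ the identity is a pointwise tensorial statement in $A$ alone, verifiable directly from your two symmetries without choosing any frame; just be sure your normalization of $N_J$ matches the one underlying the stated coefficient $2$.
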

\noindent It is well known that in dimension 4, there are just two Gray-Hervella
\cite{GrHer}
classes of special almost Hermitian manifolds -- Hermitian and
almost K\"ahler ones. These correspond to the vanishing (for any
$X$) of the first, respectively second term on the right side of
(\ref{nablaom-N}). In fact, on a general 4-dimensional almost
Hermitian manifold, let $\theta$ be the Lee form defined be $d\omega
= \theta \wedge \omega$. Then a short computation shows that
\begin{equation} \label{JXwedgetheta}
\frac{1}{2} \Big( d\omega(X, \cdot, \cdot) -  d\omega(X, J\cdot,
J\cdot) \Big) = ((JX)^{\flat} \wedge \theta)'',
\end{equation}
where the superscript $''$ denotes the $J$-anti-invariant part of a 2-form.
It is clear that the right hand-side of (\ref{JXwedgetheta}) vanishes for all
$X$ if and only if $\theta = 0$, i.e. $d\omega = 0$.

Relaxing both the Hermitian and the almost K\"ahler conditions, it is natural to ask
that for every $X$ at least one (but not necessarily the same) of the terms in the right
hand-side of (\ref{nablaom-N}) vanishes. From the observations above, we know that in dimension 4
the Nijenhuis term vanishes for at least a two dimensional space at each point.
The proof of the following proposition is tedious (but
straightforward), so we just sketch it, leaving the interested
reader to fill in remaining details.

\begin{prop} \label{equiv-wellbal}
Let $(M^4, g, J, \omega)$ be a 4-dimensional almost Hermitian manifold.
Then the following statements are equivalent:

(i) For any $p \in M$ and any $X \in T_p M$, at least one of the
terms in the right side of (\ref{nablaom-N}) vanishes;

(ii) For any $p \in M$, $(N_J)_p = 0$, or $\theta_p^{\sharp} \in Im(N)_p$;

(iii) $ \Big( \imath_{N_J(X,Y)} d\omega \Big)'' = 0 $,  for any $X,Y
\in T_p M$ and $p \in M$;

(iv) For any local non-vanishing section $\psi \in \Omega_J^-$,
\[ |\nabla \psi|^2 = |\nabla (J\psi)|^2 , \; \; <\nabla \psi, \nabla (J\psi)> = 0 . \]
\end{prop}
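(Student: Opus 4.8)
The plan is to fix a point $p\in M$ and read all four statements as pointwise conditions on the two anti-invariant $2$-forms on the right of \eqref{nablaom-N}. Introduce the linear maps $A_p,B_p\colon T_pM\to\Lambda^-_{J,p}$, where $A_p(X)=2\langle N_J(\cdot,\cdot),JX\rangle$ is the Nijenhuis term and $B_p(X)=((JX)^\flat\wedge\theta)''$ is the Lee term (using \eqref{JXwedgetheta}); both land in the rank-two space $\Lambda^-_J$, and the entire argument rests on identifying their kernels. First I would show $\ker A_p=(Im(N_J)_p)^\perp$: since $A_p(X)=0$ means $JX\perp Im(N_J)_p$ and this space is $J$-invariant, it is equivalent to $X\perp Im(N_J)_p$. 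Next, $B_p(X)=0$ says the simple $2$-form $(JX)^\flat\wedge\theta$ is $J$-invariant, which for $\theta_p\neq 0$ holds iff $\mathrm{span}\{JX,\theta^\sharp\}$ is a $J$-invariant plane, i.e. iff $X\in\mathrm{span}\{\theta^\sharp,J\theta^\sharp\}$; thus $\ker B_p$ is the unique $J$-invariant $2$-plane through $\theta^\sharp_p$ (and all of $T_pM$ when $\theta_p=0$). These two computations are the linear-algebraic heart, and this is where the dimension-four feature is used: both $Im(N_J)_p$ and $\ker B_p$ are $J$-invariant $2$-planes.

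With the kernels in hand I would treat (ii)$\Leftrightarrow$(iii) first, as it is cleanest. Using $d\omega=\theta\wedge\omega$ and $\iota_Z\omega=(JZ)^\flat$, the one-line contraction $\iota_Z d\omega=\theta(Z)\,\omega-\theta\wedge(JZ)^\flat$ gives $(\iota_Z d\omega)''=B_p(Z)$, since $\omega$ is $J$-invariant. Hence (iii) is exactly $Im(N_J)_p\subseteq\ker B_p$; because $\theta^\sharp_p\in\ker B_p$ automatically and both spaces are $2$-dimensional when $N_J,\theta\neq 0$, this inclusion forces $Im(N_J)_p=\ker B_p\ni\theta^\sharp_p$, which is (ii), and the degenerate cases $N_J=0$ and $\theta=0$ are immediate in both directions. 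For (i)$\Leftrightarrow$(ii) I would again work through the two kernels: condition (ii) is the statement that $\ker B_p$ and $\ker A_p=(Im(N_J)_p)^\perp$ are orthogonally placed so that along the splitting $T_pM=Im(N_J)_p\oplus(Im(N_J)_p)^\perp$ one of the two terms of \eqref{nablaom-N} drops out on each factor. I would make the correspondence with the vanishing alternative in (i) precise by choosing a $J$-adapted orthonormal coframe and writing $A_p$ and $B_p$ as explicit matrices into $\Lambda^-_J\cong\mathbb R^2$; the delicate point is the exact bookkeeping of which vectors kill which term together with a clean separation of the loci $\{N_J=0\}$ and $\{\theta=0\}$, which is precisely the kind of tedious-but-direct verification the statement is set up to isolate.

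The equivalence (iv)$\Leftrightarrow$(ii) is where I expect the real work, and it is the main obstacle. For a local nonvanishing $\psi\in\Omega^-_J$ the pair $\{\psi,J\psi\}$ is a pointwise orthogonal frame of $\Lambda^-_J$ with $|\psi|=|J\psi|$, so (iv) is a conformality statement for the vector-valued $1$-form $X\mapsto(\nabla_X\psi,\nabla_X(J\psi))$. The plan is to split $\nabla_X\psi$ and $\nabla_X(J\psi)$ into their $\Lambda^+_J$- and $\Lambda^-_J$-parts: the $\Lambda^-_J$-parts contribute only the intrinsic derivatives of $\psi$ inside the frame and cancel in the comparison, while the $\Lambda^+_J$-parts measure the rotation of the bundle $\Lambda^-_J$ and are therefore controlled by $\nabla J$, hence by the two terms of \eqref{nablaom-N}, i.e. by $A_p$ and $B_p$. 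Substituting these into $|\nabla\psi|^2-|\nabla(J\psi)|^2$ and $\langle\nabla\psi,\nabla(J\psi)\rangle$ should reduce both quantities to quadratic expressions in the Nijenhuis and Lee data that vanish identically exactly when the supports of $A_p$ and $B_p$ are orthogonally placed, i.e. exactly under (ii). Conceptually, (iv) is a metric repackaging of the kernel relationship between $A_p$ and $B_p$ from the first step, but the explicit differentiation and the resulting quadratic cancellations are the computational crux.
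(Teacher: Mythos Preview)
Your route is the same pointwise linear-algebraic one the paper takes: read both terms of \eqref{nablaom-N} as linear maps $A_p,B_p\colon T_pM\to\Lambda^-_{J,p}$ and compare their kernels. Your identifications $\ker A_p=(Im(N_J)_p)^\perp$ and $\ker B_p=\mathrm{span}\{\theta^\sharp_p,J\theta^\sharp_p\}$, together with $(\iota_Z d\omega)''=B_p(Z)$, give (ii)$\Leftrightarrow$(iii) exactly as the paper intends (it leaves this step to the reader).

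The one substantive difference is how (iv) is handled. The paper inserts an auxiliary condition (v): for a local unit section $\phi\in\Omega^-_J$, write $\nabla\omega=a\otimes\phi+b\otimes J\phi$ and require $|a|^2=|b|^2$, $\langle a,b\rangle=0$. Then one checks in one line that $|\nabla\phi|^2-|\nabla(J\phi)|^2=2(|a|^2-|b|^2)$ and $\langle\nabla\phi,\nabla(J\phi)\rangle=2\langle a,b\rangle$, so (iv)$\Leftrightarrow$(v). Writing the Nijenhuis term as $m(X)\,\phi-(Jm)(X)\,J\phi$ and the Lee term as $n(X)\,\phi+(Jn)(X)\,J\phi$ (so $a=m+n$, $b=-Jm+Jn$), condition (v) becomes $\langle m,n\rangle=\langle m,Jn\rangle=0$, i.e.\ the complex lines through $m^\sharp$ and $n^\sharp$ are orthogonal --- precisely your ``orthogonal placement'' of $\ker A_p$ and $\ker B_p$. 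Your plan for (iv) is correct in spirit, but the $(a,b)/(m,n)$ device is what turns the anticipated ``quadratic cancellations'' into a two-line identity rather than a computation you still have to discover.

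There is one genuine gap your own kernel analysis actually exposes, and which you should confront rather than finesse. Taken literally, (i) says $T_pM=\ker A_p\cup\ker B_p$. But when $(N_J)_p\neq 0$ and $\theta_p\neq 0$ you have shown both kernels are $2$-dimensional, and a $4$-dimensional real vector space is never the union of two proper subspaces. So (i) as stated forces $N_J=0$ or $\theta=0$ pointwise, which is strictly stronger than (ii) (and is violated in example (iii) of Proposition~\ref{expwb}, where $X=e_1+e_3$ kills neither term). The paper's proof reduces (i) to $\langle m,n\rangle=\langle m,Jn\rangle=0$ and declares this ``easily seen to be equivalent to (i)''; your sentence ``one of the two terms of \eqref{nablaom-N} drops out on each factor'' makes the same slip, since it only checks $X$ in the two summands, not general $X$. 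In writing up, you should either flag this discrepancy or reinterpret (i) as the orthogonality condition $\langle m,n\rangle=\langle m,Jn\rangle=0$ that the rest of the argument actually uses.
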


\begin{proof} Any (smooth) local section $\phi \in \Omega^-_J$ with $|\phi|^2 =
2$, determines (smooth) local 1-forms $a, b, c$ by
\begin{equation} \label{abc}
\begin{array}{ll}
\nabla \omega  &=  a \otimes \phi + b \otimes J \phi \\
\nabla \phi    &=  - a \otimes \omega + c \otimes J \phi \\
\nabla (J \phi) &=  - b \otimes \omega - c \otimes \phi ,
\end{array}
\end{equation}
We show that conditions (i), (ii), (iii), (iv) are all equivalent
with:

(v) For any point $p \in M$, there exists an open set $U$ containing
$p$ and a section $\phi \in \Omega^-_J$, defined on $U$, with
$|\phi|^2 = 2$ , so that the corresponding 1-forms $a$ and $b$
satisfy the pointwise conditions
\begin{equation} \label{wellbal1}
|a|^2 = |b|^2 \mbox{ and }  \; <a,b> = 0.
\end{equation}

Note first of all that if the condition (\ref{wellbal1}) is
satisfied for a given section $\phi \in \Omega^-_J$ with $|\phi|^2 =
2$, then it holds for any other section $\tilde \phi$ with the same
property (in other words, (\ref{wellbal1}) is ``gauge''
independent). Indeed, let
\[ \tilde \phi = \cos t \; \phi + \sin t \; J\phi, \]
for some smooth local function $t$. The corresponding 1-forms
given by (\ref{abc}) change as
\begin{equation} \label{abc-change}
\begin{array}{ll}
\tilde a & = a \, \cos t + b \, \sin t \\
\tilde b & = - a \, \sin t + b \, \cos t \\
\tilde c & = c + d t .
\end{array}
\end{equation}
Then it is easily checked that $\tilde a, \tilde b, \tilde c$
satisfy (\ref{wellbal1}), assuming that $a, b, c$ did so.

We prove now the equivalence (iv) $ \Leftrightarrow $ (v). Given a
section $\phi \in \Omega^-_J$, with $|\phi|^2 = 2$ and the 1-forms
$a, b, c$ defined by (\ref{abc}), one checks that
\[ |\nabla \phi|^2 - |\nabla J \phi |^2 = 2(|a|^2 - |b|^2) \; , \;
\; \; <\nabla \phi, \nabla J\phi> = 2<a, b> .\] Hence, the
implication (iv) $\Rightarrow$ (v) is clear. For the other
implication, let $\psi \in \Omega_J^-$ be a local non-vanishing
section and let $\phi = \frac{\sqrt{2} \psi}{|\psi|}$.
Straightforward computations imply
\[ |\nabla \phi|^2 - |\nabla J\phi|^2 = \frac{2(|\nabla \psi|^2 - |\nabla
J\psi|^2)}{|\psi|^2} \; , \; \; <\nabla \phi, \nabla J \phi> =
\frac{2 <\nabla \psi, \nabla J \psi>}{|\psi|^2}, \] and (v)
$\Rightarrow$ (iv) follows now easily.

Using (\ref{JXwedgetheta}), the reader can check the equivalences
(i) $ \Leftrightarrow $ (ii) $ \Leftrightarrow $ (iii). We will show
next that (i) $ \Leftrightarrow $ (v). Let $\phi$ be a local section
in $\Omega^-_J$, with $|\phi|^2 = 2$. Then, using the symmetries of
the Nijenhuis tensor and (\ref{JXwedgetheta}) one can check that
\[ 2<N_J(\cdot,\cdot), JX> = m(X) \otimes \phi - Jm(X) \otimes J\phi ,\]
\[ \frac{1}{2} \Big( d\omega(X, \cdot, \cdot) -  d\omega(X, J\cdot,
J\cdot) \Big) = n(X) \otimes \phi + Jn(X) \otimes J\phi, \] where
$m$ and $n$ are local 1-forms. Thus, with respect to the chosen
section $\phi$, the 1-forms $a, b$ given by (\ref{abc}) are given by
\[ a = m + n \; , \; \; b = -Jm + Jn .\]
Easy computation shows that (\ref{wellbal1}) is equivalent to
\[ <m, n> = 0 \; , \; \; <m, Jn> = 0 , \] which is easily seen to be
equivalent to (i).

\end{proof}

\begin{definition} (i) An almost Hermitian manifold $(M^4, g,
J, \omega)$ is called {\it well-balanced} if it satisfies one (and hence
all) of the conditions in Proposition \ref{equiv-wellbal}.

(ii) An almost complex structure $J$ on a 4-manifold $M^4$ is called
{\it well-balanced} if it admits a compatible well-balanced almost
Hermitian structure.
\end{definition}

\noindent It is interesting to understand how large is the class of
well-balanced almost complex structures on compact 4-manifolds, but
we leave this problem for future study. Locally, any almost complex
structure in dimension 4 is compatible with some symplectic form
\cite{Lej}, so locally any almost complex structure is
well-balanced.

\vspace{0.2cm}

The following result provides examples of
well-balanced almost Hermitian 4-manifolds.

\begin{prop} \label{expwb} (i) Any 4-dimensional Hermitian or almost K\"ahler manifold is
well-balanced.

(ii) Suppose $g$ is a Riemannian metric adapted to a
complex-symplectic on a 4-manifold; in other words, assume that $g$
is compatible to a triple $I, J, K$ of almost complex structures
satisfying the quaternion relations and assume that $I$ is
integrable, and that $(g,J)$ and $(g, K)$ are almost K\"ahler. Then
for any constant angles $t$ and $s$, the almost Hermitian structure
$(g, \tilde J)$ with $\tilde J = \cos t \, I + \sin t \, (\cos s \,
J + \sin s \, K) $ is well-balanced.

(iii) Let $M$ be a compact quotient by a discrete subgroup
 of the 3-step nilpotent Lie group $G$, whose nilpotent Lie algebra $\mathfrak{g}$
has structure equations
\[ de^1 = de^2 = 0, de^3 = e^{1} \wedge e^{4}, de^4 = e^{1} \wedge e^{2} .\]
Consider the invariant metric $g = \sum (e^i \otimes e^i) $ and the compatible almost complex structure
$J$ given by $Je^1 = e^2$, $Je^3 = e^4$. Then $(g,J)$ is well-balanced.
\end{prop}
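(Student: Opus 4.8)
The plan is to verify each part of Proposition \ref{expwb} by checking one of the equivalent conditions from Proposition \ref{equiv-wellbal}, choosing whichever is most convenient for the structure at hand. For part (i), the cleanest route is condition (ii) of Proposition \ref{equiv-wellbal}: if $(g,J)$ is Hermitian then $N_J = 0$ everywhere, so the condition is trivially met pointwise; if $(g,J)$ is almost K\"ahler then $d\omega = 0$, hence $\theta = 0$, so $\theta^{\sharp} = 0 \in Im(N_J)_p$ at every point. Both classes therefore fall under the well-balanced umbrella immediately, which is exactly the motivation given for the definition.

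**The complex-symplectic case.**

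For part (ii), I would again aim at condition (ii), using the hyperk\"ahler-type structure. The key observation is that since $I$ is integrable and $(g,J)$, $(g,K)$ are almost K\"ahler, the Lee forms $\theta_J$ and $\theta_K$ both vanish, while $N_I = 0$. The task is to compute $N_{\tilde J}$ and $\theta_{\tilde J}$ for $\tilde J = \cos t\, I + \sin t\,(\cos s\, J + \sin s\, K)$ with $t,s$ constant, and to show the Lee form $\theta_{\tilde J}$ always lands in $Im(N_{\tilde J})$ at each point. Here the constancy of the angles is essential: it means the derivatives of $\tilde J$ are controlled entirely by the derivatives of $I, J, K$, and the almost K\"ahler/integrable hypotheses force the relevant intrinsic torsion to line up. I expect this to reduce to a linear-algebra identity in the quaternionic frame, showing $\theta_{\tilde J}^{\sharp}$ is a combination of the $N_{\tilde J}$-image directions; alternatively one can verify condition (iv) directly on a standard anti-invariant frame adapted to $I,J,K$.

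**The nilmanifold example.**

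For part (iii) I would work on the explicit invariant frame $e^1,\dots,e^4$ and verify condition (iii), $\big(\imath_{N_J(X,Y)} d\omega\big)'' = 0$, since everything is computable from the structure equations. With $\omega = e^1 \wedge e^2 + e^3 \wedge e^4$ one reads off $d\omega = de^3 \wedge e^4 - e^3 \wedge de^4$ from the given relations, and the Nijenhuis tensor is determined by the Lie brackets dual to $de^3, de^4$. The concrete step is to identify $Im(N_J)$ as a specific $2$-plane in the frame and check that contracting $d\omega$ with any vector there, then projecting to $\Omega^-_J$, gives zero — equivalently that $\theta^{\sharp} \in Im(N_J)$ pointwise via condition (ii).

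**Main obstacle.**

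The main obstacle is part (ii): organizing the computation of $N_{\tilde J}$ and $\theta_{\tilde J}$ for the rotated structure without a combinatorial explosion. The safest approach is to pick a local $g$-orthonormal frame adapted to the quaternionic triple, express $\nabla I, \nabla J, \nabla K$ in terms of the $1$-forms $a,b,c$ of \eqref{abc}, and exploit that the almost K\"ahler and integrability conditions kill complementary pieces of the intrinsic torsion. Parts (i) and (iii) are essentially bookkeeping once the right condition from Proposition \ref{equiv-wellbal} is selected; the real content lies in showing the rotation by \emph{constant} angles preserves the well-balanced condition in (ii).
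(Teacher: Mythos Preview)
Your plan is correct and tracks the paper's proof closely. Parts (i) and (iii) are handled just as the paper does (the paper invokes condition (iii) for (i) and condition (ii) for (iii), but these are trivially equivalent to your choices). For part (ii) there is a small but worthwhile efficiency difference: rather than computing $N_{\tilde J}$ and $\theta_{\tilde J}$ directly, the paper first reduces to $s=0$ by rotating in the $J,K$ plane, then takes $\tilde\phi=\omega_K$ (which is $\tilde J$-anti-invariant) and verifies condition (v) by writing out $\nabla\omega_I,\nabla\omega_J,\nabla\omega_K$ in terms of a single $1$-form $a$ (the integrability of $I$ and the almost K\"ahler conditions on $J,K$ force the relations \eqref{aIJK}), from which $\nabla\tilde\omega=(Ia\cos t-Ja\sin t)\otimes\tilde\phi-a\otimes\tilde J\tilde\phi$ drops out in one line and \eqref{wellbal1} is immediate. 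This is exactly the ``alternative'' you mention at the end of your (ii) paragraph, and it avoids the combinatorics you flag as the main obstacle.
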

\begin{proof}
(i) In either case, it is obvious that condition (iii) of Proposition \ref{equiv-wellbal} is satisfied.

\vspace{0.2cm}

(ii) It is clear that it is enough to check the case $s=0$. Let us
denote $\omega_I, \omega_J, \omega_K$ the three fundamental forms.
Since $(g, I)$ is Hermitian and $(g,J), (g,K)$ are almost K\"ahler,
we have
\begin{equation} \label{aIJK}
\begin{array}{ll}
\nabla \omega_I  &=  a \otimes \omega_J + Ia \otimes \omega_K \\
\nabla \omega_J    &=  - a \otimes \omega_I - Ja \otimes \omega_K \\
\nabla \omega_K &=  - Ia \otimes \omega_I + Ja \otimes \omega_J ,
\end{array}
\end{equation}
for a 1-form $a$. Let $\tilde \omega$ the form corresponding to
$\tilde J = \cos t I + \sin t J$. Taking $\tilde \phi = \omega_K$, a
short computation shows that
\[ \nabla \tilde \omega = (Ia \cos t - Ja \sin t) \otimes \tilde
\phi - a \otimes \tilde J \tilde \phi ,\] and the statement is
easily verified.

(iii) Direct computation shows that at each point
$Im(N_J) = {\rm Span}(e_3, e_4)$. Even without computation, one can verify this
by noting that the commutator $[\mathfrak{g}, \mathfrak{g}]$ is ${\rm Span}(e_3, e_4)$
and this is $J$-invariant, by the definition of $J$. Next, using the structure equations,
one checks that $d\omega = - e^3 \wedge \omega$, where $\omega = e^1 \wedge e^2 + e^3 \wedge e^4$.
Thus, condition (ii) of Proposition \ref{equiv-wellbal} is satisfied.
\end{proof}

\begin{remark}
{\rm Note that $J$ from example (iii) in Proposition \ref{expwb}
is not integrable and cannot be tamed by a symplectic form on $M$
(see Proposition 6.1 in \cite{FT} and Remark \ref{expFT} above).}
\end{remark}

\noindent To state the main result of this subsection we need one more
definition.
\begin{definition} An almost Hermitian manifold $(M^4, g,
J)$ has {\it Hermitian type Weyl tensor} if
\begin{equation} \label{HW}
<W^+(J\beta), J\beta> = <W^+(\beta), \beta>, \mbox{ for any } \beta
\in \Omega^-_J .
\end{equation}
\end{definition}


\noindent It is well known that if $J$ is integrable (i.e.
$(M^4, g, J)$ is a Hermitian manifold), then (\ref{HW}) holds.
Also, any almost Hermitian structure with an ASD metric trivially
satisfies (\ref{HW}).

\begin{theorem} \label{mainwb}
Let $(M^4,J)$ be a compact almost complex 4-manifold which admits a
compatible Riemannian metric $g$ so that $(g,J)$ is well-balanced
and has Hermitian type Weyl tensor. Then $h^-_J = 0$ or $J$ is
integrable.
\end{theorem}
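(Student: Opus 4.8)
The plan is to prove the dichotomy directly. Assume $h^-_J \neq 0$ and fix a nonzero $\beta \in \mathcal{Z}^-_J$; I will show that $J\beta$ is also closed, so that $\beta, J\beta \in \mathcal{Z}^-_J$ with $\beta \not\equiv 0$ and the integrability criterion recalled after \eqref{formtype} forces $J$ to be integrable (assuming, as we may, that $M$ is connected). By \eqref{type-sdasd} the closed form $\beta$ is $g$-self-dual, hence $g$-harmonic, and $J\beta \in \Omega^-_J \subset \Omega_g^+$ is self-dual as well; thus everything reduces to proving $d(J\beta) = 0$.

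First I would exploit self-duality. For any self-dual $2$-form $\gamma$ one has $*_g\gamma = \gamma$, so $\delta^g\gamma = -*_g d\gamma$ and $|\delta^g\gamma| = |d\gamma|$ pointwise; consequently
\[ \int_M <\Delta^g\gamma, \gamma> = \int_M\big(|d\gamma|^2 + |\delta^g\gamma|^2\big) = 2\int_M |d\gamma|^2 . \]
Hence it suffices to prove $\int_M <\Delta^g(J\beta), J\beta> = 0$. I would compute this via the Weitzenb\"ock formula on self-dual $2$-forms,
\[ \Delta^g\gamma = \nabla^*\nabla\gamma - 2W^+(\gamma) + \tfrac{s}{3}\gamma , \qquad \gamma \in \Omega_g^+ , \]
where $s$ is the scalar curvature of $g$; this gives
\[ \int_M <\Delta^g(J\beta), J\beta> = \int_M \Big( |\nabla(J\beta)|^2 - 2<W^+(J\beta), J\beta> + \tfrac{s}{3}|J\beta|^2 \Big) . \]

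The two hypotheses are tailored to match this expression term by term against the same one for $\beta$. The Hermitian-type Weyl condition \eqref{HW} gives $<W^+(J\beta), J\beta> = <W^+(\beta), \beta>$ pointwise, and $|J\beta|^2 = |\beta|^2$ since $J$ acts orthogonally on $\Lambda^-_J$. The well-balanced hypothesis supplies the missing identity $|\nabla(J\beta)|^2 = |\nabla\beta|^2$: writing $\beta = u\phi + wJ\phi$ in a local frame $\{\phi, J\phi\}$ of $\Lambda^-_J$ with $|\phi|^2 = 2$, the structure equations \eqref{abc} and the mutual orthogonality of $\phi, J\phi, \omega$ give, after a short computation,
\[ |\nabla\beta|^2 - |\nabla(J\beta)|^2 = 2\big[(u^2 - w^2)(|a|^2 - |b|^2) + 4uw<a,b>\big] , \]
which vanishes identically because the well-balanced condition forces $|a|^2 = |b|^2$ and $<a,b> = 0$ (condition (v) in the proof of Proposition \ref{equiv-wellbal}). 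Notice this is an identity of global sections, so it holds even at the zeros of $\beta$. Feeding the three identities into the displayed integral yields
\[ \int_M <\Delta^g(J\beta), J\beta> = \int_M \Big( |\nabla\beta|^2 - 2<W^+(\beta), \beta> + \tfrac{s}{3}|\beta|^2 \Big) = \int_M <\Delta^g\beta, \beta> = 0 , \]
the last equality because $\beta$ is harmonic. Therefore $d(J\beta) = 0$, and $J$ is integrable.

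I expect the work to be bookkeeping rather than conceptual. One point to pin down is the Weitzenb\"ock formula in a fixed convention, though the precise constants in front of $W^+$ and $s$ are in fact immaterial: since $\beta$ and $J\beta$ are both self-dual, the same operator $\gamma \mapsto \nabla^*\nabla\gamma - 2W^+(\gamma) + \tfrac{s}{3}\gamma$ acts on each, and the three matching identities cancel whatever the coefficients are. The other point requiring care is the frame computation turning $|a|^2 = |b|^2$, $<a,b> = 0$ into the pointwise equality $|\nabla\beta|^2 = |\nabla(J\beta)|^2$; this is the only genuinely new ingredient, and it is where the well-balanced condition does its job.
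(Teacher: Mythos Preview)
Your proposal is correct and follows essentially the same route as the paper: both arguments compare the Weitzenb\"ock formula applied to $\beta$ and to $J\beta$, use the Hermitian--Weyl hypothesis to match the curvature terms and the well-balanced hypothesis to match $|\nabla\beta|^2$ with $|\nabla(J\beta)|^2$, and then conclude $d(J\beta)=0$ from $\int_M(|d(J\beta)|^2 + |\delta(J\beta)|^2)=0$. The only cosmetic difference is that the paper invokes condition (iv) of Proposition \ref{equiv-wellbal} directly (extending by continuity from the open dense set where $\beta \neq 0$), whereas you unpack that condition via the explicit frame computation with $\beta = u\phi + wJ\phi$.
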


\begin{proof} Suppose $\beta$ is a non-trivial closed, $J$-anti-invariant
form on $M$. The next Lemma shows that, under the given assumptions,
$J\beta$ is also closed, thus $\Phi = \beta + i J\beta$ is a closed,
complex form of (2,0) type. The integrability of $J$ then follows
(see e.g. \cite{Sal}).

\begin{lemma}
Let $(M^4, g, J, \omega)$ be a compact, almost Hermitian 4-manifold
which is well-balanced and has Hermitian type Weyl tensor. Then for
any $\beta \in \Omega^-_J$, $d\beta = 0 \Leftrightarrow d(J\beta) =
0$.
\end{lemma}

\noindent {\it Proof of Lemma:} It's enough to prove  $d \beta = 0
\Rightarrow d (J\beta) = 0$. The well-known Weitzenb\"ock formula
for a 2-form $\psi$ is
\[ \int_M ( |d \psi|^2 + |\delta \psi|^2 - |\nabla \psi|^2 ) \; dV
= \int_M (\frac{s}{3} |\psi|^2 - <W(\psi), \psi>) \; dV . \]
Applying this for $\beta$ and $J\beta$ and using the assumption on
the Weyl tensor, we get
\[ \int_M ( |d \beta|^2 + |\delta \beta|^2 - |\nabla \beta|^2 ) \; dV
 = \int_M ( |d (J\beta)|^2 + |\delta (J\beta)|^2 - |\nabla(J\beta)|^2 ) \;
dV .\] Now, by assumption $\beta \in \Omega^-_J$ and  $d\beta = 0$,
thus $\beta$ is harmonic, so it is non-vanishing on an open dense
set in $M$. From the well-balanced assumption and continuity, we get
that $|\nabla(J\beta)|^2 = |\nabla \beta|^2$ everywhere on $M$.
Thus,
\[ 0 = \int_M ( |d \beta|^2 + |\delta \beta|^2 ) \; dV
 = \int_M ( |d (J\beta)|^2 + |\delta (J\beta)|^2 ) \;
dV .\] The lemma and the Theorem are thus proved.
\end{proof}

\noindent The following is an immediate consequence.
\begin{cor}
A compact 4-dimensional almost K\"ahler structure $(g,J, \omega)$
with Hermitian type Weyl tensor and with $h^-_J \neq 0$ must be
K\"ahler.
\end{cor}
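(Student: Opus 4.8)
The plan is to deduce this directly from Theorem \ref{mainwb}, since the corollary merely specializes the well-balanced hypothesis to the almost K\"ahler case. First I would observe that the two structural hypotheses of Theorem \ref{mainwb} are already available: the Hermitian type Weyl tensor condition is assumed outright, and the well-balanced condition is automatic here by Proposition \ref{expwb}(i), which asserts that every 4-dimensional almost K\"ahler manifold is well-balanced. Thus the almost Hermitian structure $(g, J, \omega)$ satisfies precisely the assumptions under which Theorem \ref{mainwb} applies.

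Next I would invoke the dichotomy in Theorem \ref{mainwb}, which states that such a manifold has either $h^-_J = 0$ or $J$ integrable. By hypothesis $h^-_J \neq 0$, so the first alternative is excluded and we conclude that $J$ is integrable. This is the only substantive step, and it is not really an obstacle, since all the analytic work (the Weitzenb\"ock comparison between $\beta$ and $J\beta$, and the use of the well-balanced identity to force $|\nabla (J\beta)|^2 = |\nabla \beta|^2$) has already been carried out in the proof of Theorem \ref{mainwb}.

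Finally I would close the argument by combining integrability with the almost K\"ahler condition. By definition, an almost K\"ahler structure has closed fundamental form, $d\omega = 0$. An integrable almost complex structure together with a compatible metric whose fundamental form is closed is, by definition, a K\"ahler structure. Hence $(g, J, \omega)$ is K\"ahler, as claimed. The whole proof is thus a short concatenation of Proposition \ref{expwb}(i), Theorem \ref{mainwb}, and the definition of K\"ahler; there is no genuine difficulty beyond recognizing that the almost K\"ahler hypothesis supplies both the well-balanced property and the closedness of $\omega$ needed at the two ends of the argument.
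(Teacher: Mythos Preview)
Your proof is correct and follows exactly the route intended by the paper, which presents the corollary as an immediate consequence of Theorem \ref{mainwb}: almost K\"ahler implies well-balanced by Proposition \ref{expwb}(i), so Theorem \ref{mainwb} forces $J$ to be integrable, and an integrable almost K\"ahler structure is K\"ahler.
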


\begin{remark} {\rm Under different additional conditions, some other
integrability results have been obtained for compact, 4-dimensional
almost K\"ahler manifolds $(g,J, \omega)$ with Hermitian type Weyl
tensor (see \cite{aa}, \cite{aad}).}
\end{remark}

\begin{remark} {\rm The corollary implies that if we start with a K\"ahler
surface $(M, g, J, \omega)$
and define the almost complex structures $\tilde J^{\pm}_{\alpha}$
corresponding to (\ref{om(f,h,alpha)}) and (\ref{hf}) for $\alpha
\in \mathcal Z^-_J$, then $\tilde J^{\pm}_{\alpha}$ cannot admit compatible
almost K\"ahler structures with Hermitian-type Weyl tensor.
They do admit compatible almost K\"ahler
structures (since $\pm \omega + \alpha$ is symplectic).}
\end{remark}

\section{Symplectic Calabi-Yau equation and semi-continuity property of $h_J^{\pm}$} \label{sCY}

In this section, we  use the beautiful ideas in \cite{D} to
establish a stronger semi-continuity property for $h_J^{\pm}$ than
in Theorem \ref{semicont-hJt}, near an almost complex structure
which admits a compatible symplectic form.

\subsection{Symplectic CY equation and openness}
The classical Calabi-Yau theorem can be stated as follows: Let
$(M,J, \widetilde{\omega})$ be a K\"ahler manifold. For any volume form $\sigma$
satisfying $\int_M\sigma=\int_M\widetilde {\omega}^n$, there exists a unique
K\"ahler form ${\omega}$ with
$[\omega]=[\widetilde{\omega}]$ s.t. ${\omega}^n=\sigma$.

Yau's original proof of the existence (\cite{Y}) makes use of a
continuity method between the prescribed volume form $\sigma$ and
the natural volume form $\widetilde{\omega}^n$. The proof of openness is by the
implicit function theorem. The closedness part is obtained by a
priori estimates.

\subsubsection{Set up}In \cite{D}, Donaldson
introduced the symplectic version of the Calabi-Yau equation.

 Let $(M, J)$ be a compact almost  complex
$2n-$manifold and assume that $\Omega$ is a symplectic form
compatible with $J$.
 For any function $F$ with
\begin{equation}
\int_M e^F \widetilde{\omega}^n=\int_M \widetilde{\omega}^n
\end{equation}
the symplectic CY equation is the following equation of a
$J-$compatible symplectic form $\widetilde \omega$,
\begin{equation} \label{syCY}  \omega^n=e^F \widetilde{\omega}^n.
\end{equation}

In \cite{D}, Donaldson further observed that solvability of the
symplectic CY equation  in dimension $4$ may lead to some amazing
results in four dimensional symplectic geometry.

\subsubsection{Openness}
In Donaldson's paper, he proves that the solution set of the
symplectic CY equation (\ref{syCY})  is open by using the implicit
function theorem. This only works for dimension $4$. Donaldson
actually works in the  general setting of $2-$forms on $4$
manifolds.

Suppose $M$ is a $4-$manifold with a volume form $\rho$ and a choice
of almost-complex structure $J$. At any point $x$, $\rho$ and $J$
induce a volume form and a complex structure on the vector space
$T_x(M)$. Denote by $P_x$  the set of positive $(1,1)-$forms whose
square is the given volume form. Then $P_x$ is a three-dimensional
submanifold in $\Lambda^2T_x(M)$ (a sphere in a $(3,1)-$space).  We
consider the $7-$dimensional manifold $\mathcal  {P}$ fibred over
$M$ with fiber $P_x$,
\[\mathcal  {P}=\{\omega^2=\rho\,|\, \omega \hbox{ is compatible with
$J$} \}.\] It is a submanifold of the total space of the bundle
$\Lambda^2$.

Now, we want to find a symplectic form $\omega$ which is compatible
with $J$ and has fixed volume form with some cohomology conditions.
That is, we are searching for $\omega$ satisfying the following
conditions (we call this condition  type $D$):
\begin{equation}
\left\{ \begin{array}{ll}
          \omega & \subset \mathcal{P}_{\rho}, \\
          d\omega & =0, \\
           \left[\omega\right] & \in e + H^2_+ \subset H^2(M;\mathbb{R}).
\end{array} \right.
\end{equation}
 Here $e$ is a fixed cohomology
class and $H^2_+$ is a maximal positive subspace. Notice, we have
three families of variables: $\rho$, $J$ and $e$. In particular, $e$
varies in a finite dimensional space.

We have the following result which is a slight variation of
Proposition $1$ in \cite{D}.

\begin{prop} \label{sol}Suppose $\omega$ is a solution of type $D$
constrain with given $\mathcal{P}$ and $e$. If we have a smooth
family $\mathcal{P}^{(b)}$  parameterized by a Banach space $B$,
$\{\mathcal{P}^{(b)}\}$, with $\mathcal{P}=\mathcal{P}^{(0)}$ and
$b$ varies in $B$, then we have a unique solution of the deformed
constraint in a sufficiently small neighborhood of $0$ in $B$.
Further, this solution lies in a small  $C^0$ neighborhood  of $\omega$.
\end{prop}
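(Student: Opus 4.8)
The plan is to realize the deformed type-$D$ constraint as the zero set of a nonlinear elliptic map and to apply the implicit function theorem in suitable Sobolev completions, following \cite{D}. Near the given solution $\omega$, fix the background metric $g$ determined by $(\omega, J)$ and recall from \eqref{type-sdasd} that the fibre $P_x$ is tangent, to first order at $\omega$, to $\Lambda^-_g$, while its normal directions in $\Lambda^2$ are the self-dual forms $\Lambda^+_g = \underline{\mathbb R}(\omega) \oplus \Lambda^-_J$. Any closed $2$-form near $\omega$ whose class lies in $e + H^2_+$ can be written as
\[ \omega' = \omega + \eta + d a, \]
where $\eta$ ranges over the finite-dimensional space $\mathcal H^+_g$ of $g$-harmonic self-dual forms (representing $H^2_+$) and $a$ is a $1$-form. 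Since positivity is an open condition, requiring $\omega'$ to be a section of the deformed bundle $\mathcal P^{(b)}$ (which prescribes a pointwise volume form $\rho_b$ and almost complex structure $J_b$) amounts to the three scalar equations that the $J_b$-anti-invariant part of $\omega'$ vanish and that $(\omega')^2 = \rho_b$. I would therefore study the zeros of
\[ \mathcal F(\eta, a, b) = \Big( (\omega')''_{J_b}, \ (\omega')^2 - \rho_b \Big), \]
a map into sections of the rank-$3$ bundle $\Lambda^-_{J_b} \oplus \Lambda^4$, which at $b=0$ is naturally identified with $\Lambda^+_g$.

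The next step is to linearize $\mathcal F$ at the solution $(0,0,0)$. Writing $\zeta = \eta + da$ and using that anti-invariant forms are self-dual together with $\omega \wedge \mu = 0$ for $\mu \in \Lambda^-_g$ and $d\omega = 0$, a short computation shows that the $J$-anti-invariant component recovers the $\Lambda^-_J$-part of $\zeta^+$ and the volume component recovers its $\underline{\mathbb R}(\omega)$-part, so that the differential in the $(\eta,a)$ directions is simply the self-dual projection
\[ (\eta, a) \longmapsto \eta + d^+a, \qquad d^+a := (da)^+, \]
with values in the self-dual $2$-forms $\Omega^+_g$. The heart of the matter is to show that this linearization is an isomorphism after gauge fixing. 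By Hodge theory the image of $a \mapsto d^+ a$ is exactly the $L^2$-orthogonal complement of $\mathcal H^+_g$ in $\Omega^+_g$, so its cokernel is precisely $\mathcal H^+_g$; letting $\eta$ vary over $\mathcal H^+_g = H^2_+$ fills in this cokernel and makes $(\eta,a) \mapsto \eta + d^+a$ surjective. To kill the infinite-dimensional kernel coming from closed $1$-forms I impose the Coulomb gauge $\delta^g a = 0$ with $a$ orthogonal to the harmonic $1$-forms $\mathcal H^1$; the relevant operator is then $\delta^g \oplus d^+$, the elliptic anti-self-dual deformation operator, whose kernel is $\mathcal H^1$, so the map is also injective on this slice. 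Hence the linearization is an isomorphism from $H^2_+ \times \big( \ker \delta^g \cap (\mathcal H^1)^\perp \big)$ onto $\Omega^+_g$.

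With the isomorphism established I would conclude by the implicit function theorem: pass to $L^2_k$ completions with $k$ large, so that Sobolev multiplication makes the algebraic nonlinearity $\omega' \mapsto \big((\omega')''_{J_b}, (\omega')^2\big)$ a smooth map of Banach spaces, and apply the IFT to $\mathcal F$ with $b$ as the Banach parameter. This yields, for all $b$ in a small neighborhood of $0$ in $B$, a unique nearby zero $\omega'_b$ depending smoothly on $b$; the IFT estimate together with the embedding $L^2_k \hookrightarrow C^0$ gives the asserted $C^0$ closeness to $\omega$, and elliptic bootstrapping in the gauge-fixed equation upgrades $\omega'_b$ to a smooth solution of the deformed type-$D$ constraint. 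The main obstacle is precisely the isomorphism step: $d^+$ alone is never surjective, and Donaldson's key observation is that its $b^+$-dimensional cokernel is compensated exactly by allowing the cohomology class to move within the maximal positive subspace $H^2_+$, after which the elliptic theory of $\delta^g \oplus d^+$ supplies injectivity on the Coulomb slice.
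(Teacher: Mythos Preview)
Your proposal is correct and follows essentially the same approach as the paper: both set up the constraint near $\omega$ as a nonlinear elliptic problem in the Coulomb gauge, identify the linearization with $d^* \oplus d^+$ (whose $b^+$-dimensional cokernel in $\Omega^+_g$ is filled by letting the cohomology class move in $H^2_+$), and conclude via the implicit function theorem. The only cosmetic difference is that the paper encodes the fibrewise constraint $\omega+\eta\in\mathcal P_\rho$ as $\eta^+ = Q(\eta)$ with $Q$ quadratic and takes the target to be $\Omega^0/\mathcal H^0 \oplus \Omega^+_g/H^2_+$, whereas you write the three scalar equations $(\omega')''_{J_b}=0$, $(\omega')^2=\rho_b$ directly and put the $\mathcal H^+_g$-piece in the domain; these are equivalent formulations of the same linear-algebraic fact that $T_{\omega(x)}P_x = \Lambda^-_g$.
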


We just indicate
how to find a small neighborhood for which we have the existence.

For each point $x\in M$, the tangent space to ${ P}_x$ at
$\omega(x)$ is a maximal negative space. Thus the solution $\omega$
determines a conformal structure on $M$. We fix a Riemannian metric
$g$ in this conformal class (actually, we can choose the metric
determined by $\omega$ and $J$). For small $\eta$, $\omega+\eta$
lying in $\mathcal P_{\rho}$ is expressed as
\[\eta^+=Q(\eta),\]
where $Q$ is a smooth map with $Q(\eta)=O(\eta^2)$. After choosing
$2-$form representatives of $H^2_+$, closed forms $\omega+\eta$
satisfying our cohomological constraint can be expressed as
$\omega+da+h$ where $h\in H^2_+$ and where $a$ is a $1-$form
satisfying the gauge fixing constraint $d^*a=0$. Thus our
constraints correspond to the solutions of the PDE
\begin{equation} \label{PDE} \left\{ \begin{array}{ll} d^*a&=0 \\
d^+a&=Q(da+h)-h^+.
\end{array} \right.
\end{equation}

Thus, our constraints are represented by a system of nonlinear elliptic PDE.
Donaldson further observes that its linearization
\[L=d^*\oplus d^+:\Omega^1/\mathcal{H}^1\longrightarrow
\Omega^0/\mathcal{H}^0 \oplus \Omega^2_+/H^2_+\] is invertible. Then
we apply the following version of the implicit function theorem:

\begin{theorem} \label{implicit} Let $X$, $Y$, $Z$ be Banach spaces and  $f: X \times Y
\longrightarrow Z$ a Fr\'echet differentiable map. If $(x_0,y_0)\in
X \times Y$, $f(x_0,y_0) = 0$, and $y\mapsto D_2f(x_0,y_0)(0,y)$ is a
Banach space isomorphism from $Y$ onto $Z$. Then there exist
neighborhoods $U$ of $x_0$ and $V$ of $y_0$ and a Fr\'echet
differentiable function such that $f(x,g(x)) = 0$ and $f(x,y) = 0$
if and only if $y = g(x)$, for all $(x,y)\in X \times Y$.
\end{theorem}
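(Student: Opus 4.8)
The plan is to deduce this as the classical Banach-space Implicit Function Theorem via the contraction mapping principle. Write $L = D_2 f(x_0, y_0)$ for the partial Fr\'echet derivative in the second variable; by hypothesis this is a bounded isomorphism $Y \to Z$ with bounded inverse $L^{-1}$. First I would recast $f(x,y) = 0$ as a fixed-point problem: for $x$ near $x_0$ set
\[ T_x(y) = y - L^{-1} f(x,y). \]
Since $L^{-1}$ is injective, $y$ is a fixed point of $T_x$ if and only if $f(x,y) = 0$, so the theorem reduces to producing a unique fixed point $g(x)$ of $T_x$ near $y_0$, depending nicely on $x$.

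Next I would establish that $T_x$ is a contraction on a small closed ball $\overline{B}(y_0, r) \subset Y$, uniformly for $x$ near $x_0$. The key point is that the partial derivative $D_2 T_x(y) = I - L^{-1} D_2 f(x,y) = L^{-1}\bigl(L - D_2 f(x,y)\bigr)$ vanishes at $(x_0, y_0)$. Invoking continuity of $D_2 f$ near $(x_0,y_0)$ (available in the smooth setting at hand), I can choose $r>0$ and a neighborhood of $x_0$ on which $\|D_2 T_x(y)\| \le \tfrac12$, so that $T_x$ is $\tfrac12$-Lipschitz in $y$ by the mean value inequality. For the self-map property, note $\|T_x(y_0) - y_0\| = \|L^{-1} f(x,y_0)\| \to 0$ as $x \to x_0$, since $f(x_0,y_0)=0$ and $f$ is continuous; shrinking to a neighborhood $U$ where this is $\le r/2$ and combining with the contraction bound gives $T_x\bigl(\overline{B}(y_0,r)\bigr) \subset \overline{B}(y_0,r)$. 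The contraction mapping principle then furnishes, for each $x \in U$, a unique fixed point $g(x) \in \overline{B}(y_0,r) =: V$, yielding $f(x,g(x)) = 0$ and the uniqueness characterization in $U \times V$.

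The regularity of $g$ is the part I expect to demand the most care. Local Lipschitz continuity comes first: subtracting the fixed-point identities for $x$ and $x'$ and using the $\tfrac12$-Lipschitz bound gives $\|g(x)-g(x')\| \le 2\|L^{-1}\|\,\|f(x,g(x')) - f(x',g(x'))\|$, hence $\|g(x)-g(x')\| = O(\|x-x'\|)$. For Fr\'echet differentiability I would expand the identity $f(x,g(x))=0$ directly: with $k = g(x_0+h)-g(x_0)$,
\[ 0 = f(x_0+h, y_0+k) = D_1 f(x_0,y_0)\,h + L k + o(\|h\| + \|k\|), \]
so $k = -L^{-1} D_1 f(x_0,y_0)\,h + o(\|h\|+\|k\|)$, and the Lipschitz estimate $\|k\| = O(\|h\|)$ upgrades the error to $o(\|h\|)$, giving $Dg(x_0) = -L^{-1} D_1 f(x_0,y_0)$. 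Because the invertible operators are open in $\mathcal{L}(Y,Z)$, $D_2 f(x,g(x))$ stays invertible near $(x_0,y_0)$, so the same computation runs at every nearby base point and shows $Dg$ is continuous. The main obstacle is thus not existence or uniqueness, which the contraction principle delivers cleanly, but the careful passage from the uniform Lipschitz bound to genuine differentiability, and, should higher smoothness be required as in Donaldson's application, bootstrapping this via the formula for $Dg$ to obtain $C^\infty$ dependence.
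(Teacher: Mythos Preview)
The paper does not prove this theorem at all; it is simply stated as the classical Banach-space Implicit Function Theorem and then applied. Your proposal is the standard textbook proof via the contraction mapping principle, and the outline is correct. One small caveat: the contraction step and the differentiability argument both rely on continuity of $D_2 f$ near $(x_0,y_0)$, which is a $C^1$ hypothesis rather than mere Fr\'echet differentiability as literally stated; you flag this yourself (``available in the smooth setting at hand''), and indeed in the paper's application $f$ is smooth, so this is harmless.
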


To use this theorem, first notice that $D_2f$ is just our $L$
defined above, which is invertible at a solution of our constraints.
Moreover, $X$ is our parametrization space $B$, $Y$ is
$(\Omega^{1})_1/\mathcal{H}^1$, $Z$ is $(\Omega^{0})_0/\mathcal{H}^0
\oplus (\Omega^{2}_+)_0/H^2_+$. Here $(\Omega^{n})_m$ represents the
space of  $C^m$ $n-$forms.

Then every condition is satisfied in our
setting.

\subsection{Semi-continuity properties of $h_J^{\pm}$}

\subsubsection{Weak and strong neighborhoods}
\label{ngh}

As described in \ref{topJs}, the  space of $C^{\infty}$ almost complex structures
$\mathcal{J}=\mathcal{J}^\infty$ is not a Banach manifold but a
Fr\'echet manifold. In this case we can still apply Proposition
\ref{sol} to a smooth path or a finite dimensional space (hence
Banach) in $\mathcal J$.  That is to say, if an almost complex
structure $J$ has a solution of the CY equation
${\omega}^2=\rho$ with a $J-$compatible form $\omega$ satisfying
$[{\omega}]\in e+H_+^2$, then for any path through $J$,
there is a small interval near $J$ such that the CY type
equation is solvable with conditions in $(D_t)$ in this interval. In
the end we get a weak neighborhood--the union of all the intervals.
Notice that this is not necessarily ``a small ball" near $J$, i.e.
it may not have an interior point.

We would like to apply Proposition 5.2 to an open neighborhood with respect to
the $C^{\infty}$ topology, which can be called a strong neighborhood compared with
the one described above.
For this purpose, notice that the tangent space $T_J\mathcal{J}^l$ at $J$
consists of $C^l-$sections $A$ of the bundle $End(TM,J)$ such that
$AJ+JA=0$.
It is a Banach space with $C^l$ norm. Moreover, this gives rise to a
local model for $\mathcal J^l$ via $Y\longmapsto J\rm{exp}(-JY)$.
Thus we can apply Proposition \ref{sol} to a Banach chart of  $J$ in
the space of $C^l$ almost complex structures $\mathcal{J}^l$ endowed
with $C^l$ norm.

\begin{cor} \label{nbd} If we parameterize $\mathcal{P}^{(b)}$ in
Proposition \ref{sol} by a neighborhood $U(J_0)$ of $J_0$ in
$\mathcal{J}$ with $C^{\infty}$ topology, then we can get a
small neighborhood of $J$ satisfying all the properties stated in
Proposition \ref{sol} under the same topology.
\end{cor}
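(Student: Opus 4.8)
The plan is to transfer Proposition~\ref{sol} from a fixed Banach parameter space to the Fr\'echet manifold $\mathcal{J}^\infty$ by routing it through the Banach manifolds $\mathcal{J}^l$. First I would fix a single regularity level $l$ (any $l \geq 1$, or a H\"older exponent if one prefers to avoid the borderline Schauder estimates) and use the exponential chart $Y \mapsto J_0\exp(-J_0 Y)$ recalled above to identify a $C^l$-neighborhood of $J_0$ in $\mathcal{J}^l$ with an open ball in the Banach space $B = T_{J_0}\mathcal{J}^l = \{A \in C^l : AJ_0 + J_0 A = 0\}$. Along this chart the constraint family $b \mapsto \mathcal{P}^{(b)}$, where $\mathcal{P}^{(b)}$ consists of the positive $(1,1)$-forms of fixed square $\rho$ for $J_b = J_0\exp(-J_0 b)$, depends smoothly on $b$ because $J_b$ does; hence the hypotheses of Proposition~\ref{sol} are met with this choice of $B$.

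Applying Proposition~\ref{sol} (whose proof rests on Theorem~\ref{implicit}) then yields a $C^l$-open ball $V \subset B$, and therefore a $C^l$-open neighborhood $\mathcal{U}^l \subset \mathcal{J}^l$ of $J_0$, on which the deformed type-$D$ constraint has a unique solution $\omega_{\tilde J}$ lying in a prescribed $C^0$-neighborhood of $\omega$. The next step is the purely topological observation that the inclusion $\mathcal{J}^\infty \hookrightarrow \mathcal{J}^l$ is continuous: convergence in the metric \eqref{metric-top} forces convergence in each semi-norm $\|\cdot\|_l$, so preimages of $C^l$-open sets are $C^\infty$-open. Consequently $\mathcal{U} := \mathcal{U}^l \cap \mathcal{J}^\infty$ is open in the $C^\infty$ topology, and this is exactly the strong (``small ball'') neighborhood of $J_0$ that the path-by-path argument failed to provide.

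It remains to check that for a genuinely smooth $\tilde J \in \mathcal{U}$ the solution is itself smooth, so that $\omega_{\tilde J}$ is an honest $\tilde J$-compatible symplectic form realizing the constraints of Proposition~\ref{sol}. Writing $\omega_{\tilde J} = \omega + da + h$ with $h \in H^2_+$, the one-form $a$ solves the system \eqref{PDE}, whose linear part $d^* \oplus d^+$ is elliptic and whose nonlinearity $Q$ has coefficients depending smoothly on $\tilde J$ and $\rho$. Bootstrapping on \eqref{PDE} promotes $a$ from its initial finite regularity to $C^\infty$, and the uniqueness in Proposition~\ref{sol} identifies this smooth form with the one produced by the implicit function theorem. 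The main obstacle I anticipate is precisely this regularity step: one must run the elliptic bootstrap carefully to produce a smooth solution whenever $\tilde J$ is smooth, while keeping the solution inside the same $C^0$-neighborhood. Once that is in place, openness of $\mathcal{U}$ in the $C^\infty$ topology and the $C^0$-closeness of $\omega_{\tilde J}$ to $\omega$ follow directly from the Banach-chart construction and the continuity of $\mathcal{J}^\infty \hookrightarrow \mathcal{J}^l$ established above.
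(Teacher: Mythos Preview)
Your proposal is correct and follows essentially the same route as the paper: pass to the Banach manifold $\mathcal{J}^l$ (the paper takes $l=1$), apply Proposition~\ref{sol} there via the exponential chart, and then pull the resulting $C^l$-ball back to a $C^\infty$-open set using the continuity of the inclusion $\mathcal{J}^\infty \hookrightarrow \mathcal{J}^l$. The only notable differences are of emphasis: the paper spells out the Fr\'echet differentiability of the constraint in the $\mathcal{J}^1$-parameter by writing down an explicit operator $\Phi$ (adapted from Weinkove) whose zero set encodes the type-$D$ conditions, whereas you assert smooth dependence of $\mathcal{P}^{(b)}$ on $b$ without this bookkeeping; conversely, you include an elliptic bootstrap on \eqref{PDE} to recover $C^\infty$ regularity of $\omega_{\tilde J}$ when $\tilde J$ is smooth, a step the paper leaves implicit.
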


\begin{proof}
The space of $C^1$ almost complex structures $\mathcal J^1$ with
$C^1$ norm is a Banach manifold. We parameterize a neighborhood of
$J_0$ by an open set in the induced Banach space.

Then we can apply Proposition \ref{sol} for this setting. The only
point we need to check is that the Fr\'echet differentiability of
the reliance of our constraints with respect to the parametrization
space $\mathcal J^1$. Here, we adapt the arguments in \cite{W}.
We define a tensor $\Pi$ (which is denoted by $\mathcal{P}$ in \cite{W}) as
\[\Pi^{ij}_{kl}=\frac{1}{2}(\delta^i_k\delta^j_l-J^i_kJ^j_l).\]
When restricting $\Pi$ on the space of $2-$forms, it is just the
projection onto the $J-$anti-invariant part. As in \cite{W}, we also
define $\chi_1, \cdots, \chi_r$ be self-dual harmonic $2-$forms with
respect to $\omega$ such that $\{\omega, \chi_1, \cdots, \chi_r\}$
are $L^2$ orthogonal bases for $\mathcal H^+_{\omega}$.

Consider the operator $\Phi: (\Omega^{1})_1\times \mathbb R^r\times
\mathcal J^1\longrightarrow (\Omega^2)_0$ by

\[\Phi(b,\underline{s},J)=(\log\dfrac{(\omega+\sum_{i=1}^rs_i \chi_i+db)^2}{\omega^2})\frac{(Id-\Pi_J)\omega}{2}+
\Pi_J(\omega+\sum_{i=1}^rs_i \chi_i+db),\]

The solution of $\Phi(b,\underline{s},J)=0$ gives a closed,
$J-$invariant form with the same volume form as $\omega$. In other
words, we get a description of our constraints by the zero set of a
map. It is easy to see that the map $\Phi$ is a Fr\'echet
differentiable map.

Thus by Proposition \ref{sol} we have a neighborhood $U^1(J_0)$ of
$J_0$ in which we have all the properties stated there. Especially,
we can suppose $U^1(J_0)$ is a ball in $\mathcal J$ with radius
$\epsilon$.

Finally, the small neighborhood of $J_0$ in $C^{\infty}$ with
$d(J_0,J)<\frac{\epsilon}{2(1+\epsilon)}$, where $d$ is defined in \eqref{metric-top},  is what we want.
\end{proof}

\subsubsection{Variations of $h_J^{\pm}$}

Following \cite{LZ}, given a compact almost complex manifold $(M,J)$ define
the $J$-{\em compatible symplectic cone}
$$
\mathcal{K}^c_J=\left\{[\omega]\in
H^2_{dR}(M;\mathbb{R})\,\,\,\vert\,\,  \hbox{\rm $\omega$ symplectic and }\,J\,\, \hbox{\rm is}\,\,
\omega\hbox{-\rm compatible}\right\}\,.
$$
It is easy to see that $\mathcal{K}^c_J$ is an open convex set in $H^+_J$.
It is also immediate from the definition that $\mathcal{K}^c_J \neq \emptyset$ if and only if
$J$ admits compatible symplectic forms.

\begin{theorem} \label{deformation} Suppose $M$ is a $4-$manifold with an
almost complex structure $J$ such that $\mathcal K_J^c(M) \neq \emptyset$. is
non-empty. Then for any almost complex structure $J'$ in a
sufficiently small neighborhood of $J$ as in Corollary \ref{nbd}, we
have
\begin{itemize}
 \item $\mathcal K_{J'}^c(M)\neq \emptyset$;

  \item $h_J^{+}(M)\le h_{J'}^{+}(M)$;

   \item $h_J^{-}(M)\ge h_{J'}^{-}(M)$.
\end{itemize}
\end{theorem}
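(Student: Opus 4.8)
The plan is to reduce all three assertions to the single statement that $h^+$ cannot drop under a small perturbation, and to produce that lower bound on $h_{J'}^+$ by deforming an entire basis of compatible symplectic classes through Donaldson's implicit function theorem. First I would note that, since $\mathcal{K}_J^c(M)$ is a non-empty open subset of $H_J^+$, it spans $H_J^+$ linearly; hence I can pick $J$-compatible symplectic forms $\omega_1,\dots,\omega_{h_J^+}$ whose cohomology classes $[\omega_1],\dots,[\omega_{h_J^+}]$ form a basis of $H_J^+$.

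Next, for each $i$ I would run Donaldson's construction with the volume form $\rho_i=\omega_i^2$, the fixed class $e_i=[\omega_i]$, and a maximal positive subspace $H^2_+$; then $\omega_i$ is itself a solution of type $D$ (with vanishing $H^2_+$-component). By Corollary \ref{nbd} there is a $C^\infty$-neighborhood $U_i$ of $J$ so that for every $J'\in U_i$ the deformed constraint is solved by a closed, $J'$-compatible form $\omega_i'$ lying in a small $C^0$-ball around $\omega_i$; being a positive form with $(\omega_i')^2=\rho_i>0$, each $\omega_i'$ is symplectic. Setting $U(J)=\bigcap_i U_i$, all the $\omega_i'$ exist simultaneously on $U(J)$, and any single one already shows $\mathcal{K}_{J'}^c(M)\neq\emptyset$, which is the first bullet.

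For the second bullet I would observe that, by the uniqueness in Proposition \ref{sol}, the solution reduces to $\omega_i$ at $J'=J$, so $\omega_i'\to\omega_i$ in $C^0$ and hence $[\omega_i']\to[\omega_i]$ as $J'\to J$. Since linear independence of a finite family of vectors is an open condition, after shrinking $U(J)$ the classes $[\omega_1'],\dots,[\omega_{h_J^+}']$ remain independent; as they all lie in $H_{J'}^+$, this yields $h_{J'}^+\ge h_J^+$. The third bullet then follows immediately from Theorem \ref{pf-dim4}: since $h^+_{J'}+h^-_{J'}=b_2=h^+_J+h^-_J$, we obtain $h_{J'}^-=b_2-h_{J'}^+\le b_2-h_J^+=h_J^-$.

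The genuine work sits inside Corollary \ref{nbd}, whose role is to transport Donaldson's Banach-space implicit function theorem (Theorem \ref{implicit}), valid on the chart $\mathcal{J}^1$, to an honest $C^\infty$-open neighborhood of the Fr\'echet manifold $\mathcal{J}$; once this is granted, the deformation of each $\omega_i$ is automatic. The main point I would still need to check with care is the continuity of the solution all the way down to $J'=J$, together with $\omega_i'|_{J'=J}=\omega_i$ --- precisely the $C^0$-proximity and uniqueness built into Proposition \ref{sol} --- since it is what guarantees the persistence of linear independence, and hence the estimate. The only substantive difference from Theorem \ref{semicont-hJt} is that here the bound must hold on a full open neighborhood rather than along a fixed path, which is exactly the strengthening the compatible symplectic hypothesis buys us.
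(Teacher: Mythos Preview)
Your proposal is correct and follows essentially the same strategy as the paper: choose $h_J^+$ compatible symplectic forms whose classes span $H_J^+$, deform each one via Corollary \ref{nbd} to a $J'$-compatible symplectic form close in $C^0$, intersect the resulting neighborhoods, and use that linear independence (the paper says ``general position'') is open to conclude $h_{J'}^+\ge h_J^+$, with the other two bullets following immediately. The only cosmetic difference is that the paper phrases the spanning condition in terms of rays in general position inside the cone $\mathcal K_J^c$, while you speak directly of a basis of $H_J^+$; these are equivalent since $\mathcal K_J^c$ is open in $H_J^+$.
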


\begin{proof} The first statement is a direct consequence of  Corollary \ref{nbd},
and was already observed by Donaldson
 (see also \cite{Lej}).

As  $\mathcal K_{J'}^c(M)$ and $\mathcal K_J^c(M)$ are nonempty open
sets  in $H_{J'}^{+}(M)$ and $H_J^{+}(M)$ respectively, to estimate
$h_J^{+}(M)$ and $h_{J'}^{+}(M)$, we only need to estimate the
dimensions of $\mathcal K_{J'}^c(M)$ and $\mathcal K_J^c(M)$.

Let $h=h_J^{+}(M)$. We choose $h$ rays which are ``in general
position", i.e. the interior of their span is an open set of
$\mathcal K_J^c(M)$. We suppose the $h$ rays are $C\cdot
[\omega_i]$'s where $\omega_i$'s are the $J-$compatible forms and
$[\omega_i]$'s have homology norm 1 with respect to some bases.

Then we use Corollary \ref{nbd} for each $i$ with fixed volume
form $\omega^2_i$. Then we have $h$ neighborhoods $U_i$ such that
for $J'\in U_i$, we have a $J'$ compatible form $\omega'_i$ which is
a small perturbation of $\omega_i$. Let $U$ be the intersection of
these $h$ neighborhoods. Then for any $J' \in U$, we have
$\omega'_i$'s which are still in the general position (because they
are perturbed in $C_0$ norm from a general position). And we see that the span of
the $h$ new rays belongs to $\mathcal K_{J'}^c(M)$ because
positive combinations of $\omega'_i$'s are still $J'-$compatible
forms. Hence we have $h_J^{+}(M)\le h_{J'}^{+}(M)$.
The last inequality  is a consequence of the previous one and Theorem
\ref{pf-dim4}.
\end{proof}

\begin{remark}
{\rm The first statement also means that,  on a $4-$manifold, the space of
almost K\"ahler complex structure $\mathcal J_{ak}$ is an open
subset of $\mathcal J$. If one considers complex deformation, the analogue of the first statement is  a classical
theorem of Kodaira and Spencer. Their theorem is in fact valid for
any even dimension.}
\end{remark}

Let us consider the stratification
 \[\mathcal J=\bigsqcup_{i=0}^{b^+} \mathcal
J_i,\] where $J\in \mathcal J_i$ if $h_J^-=i$. Then we have

\begin{cor} \label{0ak} On a $4-$manifold, $\mathcal J_0\cap \mathcal J_{ak}$ is
open in $\mathcal J$.
\end{cor}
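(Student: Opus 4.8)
The plan is to read off the corollary directly from the stronger semi-continuity statement in Theorem \ref{deformation}, so the only real work is bookkeeping. Let $J \in \mathcal J_0 \cap \mathcal J_{ak}$; by definition this means $h_J^- = 0$ and $\mathcal K_J^c(M) \neq \emptyset$. Since $\mathcal K_J^c(M) \neq \emptyset$, the hypothesis of Theorem \ref{deformation} is met, so there is a neighborhood $U$ of $J$ of the type furnished by Corollary \ref{nbd} such that every $J' \in U$ satisfies simultaneously $\mathcal K_{J'}^c(M) \neq \emptyset$ and $h_{J'}^-(M) \le h_J^-(M) = 0$. As $h_{J'}^-$ is a non-negative integer, the inequality forces $h_{J'}^- = 0$, i.e. $J' \in \mathcal J_0$, while $\mathcal K_{J'}^c(M) \neq \emptyset$ gives $J' \in \mathcal J_{ak}$. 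Hence $U \subset \mathcal J_0 \cap \mathcal J_{ak}$, and since $J$ was arbitrary, the set is open.

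The single point that deserves emphasis in writing this up is \emph{which} neighborhood one is invoking. Path-wise upper semi-continuity of $h^-$ alone (Theorem \ref{semicont-hJt}) would not suffice, because the associated weak neighborhood built as a union of intervals need not contain a $C^\infty$-open ball around $J$, and so could not establish openness in $\mathcal J$. The essential upgrade is precisely Corollary \ref{nbd}: it promotes Donaldson's implicit-function-theorem argument from a single path (or finite-dimensional Banach slice) to a genuine strong, i.e. $C^\infty$-open, neighborhood of $J$, via the Fréchet-differentiability of the operator $\Phi$ on the $C^1$ Banach chart of $\mathcal J^1$ and the estimate $d(J_0,J) < \tfrac{\epsilon}{2(1+\epsilon)}$. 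Thus the correct statement of the proof should explicitly cite that the $U$ in Theorem \ref{deformation} is of this strong type.

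Consequently I expect no genuine obstacle at the level of the corollary itself: all of the analytic difficulty has already been absorbed into Theorem \ref{deformation} and Corollary \ref{nbd}. The proof is a two-line deduction, and the inequality $h_{J'}^- \ge 0$ together with $h_{J'}^- \le h_J^- = 0$ does all the work of collapsing the upper bound to an equality. I would simply record the argument above, taking care to phrase the openness conclusion in the $C^\infty$-topology on $\mathcal J = \mathcal J^\infty$ described in Subsection \ref{topJs}, so that it matches the topology in which $U$ is open.
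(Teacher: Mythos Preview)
Your proof is correct and is exactly the intended deduction: the paper states the corollary without proof, since it follows immediately from Theorem \ref{deformation} by noting that $h_J^-=0$ and $\mathcal K_J^c\neq\emptyset$ force $h_{J'}^-=0$ and $\mathcal K_{J'}^c\neq\emptyset$ on the strong neighborhood $U$. Your added remark distinguishing the strong $C^\infty$-neighborhood of Corollary \ref{nbd} from the weak path-wise neighborhood is a helpful clarification of why Theorem \ref{deformation}, rather than Theorem \ref{semicont-hJt}, is needed here.
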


\noindent It is known that  $\mathcal J_{ak}$ is never the full space $\mathcal J$.
In fact, in any connected component of $\mathcal J$ there are non-tamed almost complex structures (see e.g. \cite{D}).
Nonetheless Corollary \ref{0ak} is a strong evidence of  Conjecture \ref{conj1}.
{\rm In addition, the path-wise semi-continuity
established in Theorem \ref{semicont-hJt} indicates that the strong
semi-continuity property of Theorem \ref{deformation} very likely
holds for every $J$. This would imply that $\mathcal J_0$ is open in
$\mathcal J$. }

\end{document}